\documentclass[12pt]{amsart}
\usepackage{amsmath, amssymb,amsthm,tikz}
\usepackage{mathrsfs}
\usepackage{mathtools}
\usepackage{hyperref}
\usepackage{multirow}

\usepackage{hyperref}
\hypersetup{
    colorlinks,
    citecolor=blue,
    filecolor=blue,
    linkcolor=blue,
    urlcolor=black
}

\numberwithin{equation}{subsection}
\newtheorem{cor}[equation]{Corollary}
\newtheorem{defn}[equation]{Definition}
\newtheorem{notation}[equation]{Notation}
\newtheorem{remark}[equation]{Remark}
\newtheorem{prop}[equation]{Proposition}
\newtheorem{example}[equation]{Example}
\newtheorem{examples}[equation]{Examples}

\newtheorem{thm}[equation]{Theorem}

\usepackage[margin=1.19in]{geometry}

\newcommand{\NN}{\mathbb{N}}
\newcommand{\ZZ}{\mathbb{Z}}

\newcommand{\CC}{\mathbb{C}}

\newcommand{\Qlbar}{\overline{\mathbb{Q}}_\ell}
\newcommand{\FF}{\mathbb{F}}

\newcommand{\Fq}{{\mathbb{F}_q}}

\newcommand{\G}{\mathbb{G}}

\newcommand{\cE}{\mathcal{E}}

\newcommand{\cL}{\mathcal{L}}
\newcommand{\tcL}{{\widetilde{\cL}}}

\newcommand{\cP}{\mathcal{P}}
\newcommand{\cS}{\mathcal{S}}
\newcommand{\cT}{\mathcal{T}}

\newcommand{\End}{\operatorname{End}}
\newcommand{\sgn}{\operatorname{sgn}}

\newcommand{\Id}{\operatorname{Id}}
\newcommand{\sign}{\operatorname{sign}}
\newcommand{\rI}{\mathrm{I}}

\newcommand{\mult}{\mathrm{m}}

\newcommand{\Ind}{\operatorname{Ind}}

\newcommand{\rU}{\mathrm{U}}

\newcommand{\bB}{\mathbf{B}}
\newcommand{\bG}{\mathbf{G}}
\newcommand{\bL}{\mathbf{L}}

\newcommand{\bP}{\mathbf{P}}

\newcommand{\bT}{\mathbf{T}}
\newcommand{\bU}{\mathbf{U}}

\newcommand{\an}{\operatorname{an}}

\newcommand{\disc}{\operatorname{disc}}

\newcommand{\unip}{\mathrm{u}}

\newcommand{\ord}{\operatorname{ord}}

\newcommand{\height}{\operatorname{height}}

\newcommand{\rank}{\operatorname{rank}}
\newcommand{\rk}{\operatorname{rk}}

\newcommand{\Tr}{\operatorname{Tr}}

\newcommand{\Hom}{\operatorname{Hom}}

\newcommand{\Res}{\operatorname{Res}}
\newcommand{\rr}{\operatorname{r}}
\newcommand{\ii}{\operatorname{i}}

\newcommand{\Cent}{\operatorname{C}}

\newcommand{\Nor}{\operatorname{N}}
\newcommand{\fR}{\mathfrak{R}}
\newcommand{\fs}{\mathfrak{s}}

\newcommand{\fC}{\mathfrak{C}}

\newcommand{\Irr}{\operatorname{Irr}}

\newcommand{\fS}{\mathfrak{S}}

\newcommand{\rN}{{\mathrm{N}}}

\newcommand{\rZ}{{\mathrm{Z}}}

\newcommand{\Gal}{\operatorname{Gal}}

\newcommand{\GL}{\mathrm{GL}}

\newcommand{\Sp}{\operatorname{Sp}}
\newcommand{\SO}{\operatorname{SO}}

\newcommand{\Mat}{\operatorname{M}}

\newcommand{\cusp}{\mathrm{cusp}}

\newcommand{\Sc}{\mathrm{Sc}}

\newcommand{\bbH}{\mathbb{H}}

\newcommand{\cc}{\mathrm{c}}

\def\Sym{{\mathrm{Sym}}}
\def\infl{\mathrm{infl}}

\def\bX{{\mathbf{X}}}

\def\ord{\mathrm{ord}}

\def\disc{{\mathrm{disc}}}

\def\rO{{\mathrm{O}}}

\def\rB{{\mathrm{B}}}

\def\rD{{\mathrm{D}}}

\def\cc{\mathrm{sc}}

\def\an{{\mathrm{an}}}
\def\rr{{\mathrm{r}}}

\def\cc{{\mathrm{c}}}
\def\cB{{\mathcal{B}}}

\def\cT{{\mathcal{T}}}

\renewcommand{\tilde}{\widetilde}

\def\defect{{\mathrm{def}}}
\def\deter{{\mathrm{det}}}
\def\disc{{\mathrm{disc}}}
\def\modu{{\mathrm{mod}}}

\def\modu{{\mathrm{mod}}}
\def\prin{{\mathrm{prin}}}

\title[The theta correspondence over finite fields]{The theta correspondence over finite fields}

\author{Anne-Marie Aubert}

\address{Sorbonne Universit\'e and Universit\'e Paris Cit\'e, CNRS,
IMJ-PRG, F-75005 Paris, France}
\email{anne-marie.aubert@imj-prg.fr}
\date{\today}
\begin{document}
\maketitle
\begin{abstract} This set of lecture notes is an expanded version of a mini-course the author gave in March of 2025 for the program 
``\href{https://indico.math.cnrs.fr/event/10843/}{Representation Theory \& Noncommutative Geometry}" at the Institut Henri Poincaré, Paris.  

The goal is to provide a survey of the main properties of the theta correspondence over finite fields of odd characteristic, including its compatibility with Harish-Chandra and Lusztig series, and with the Jordan decomposition of representations,
as well as its full explicit description. 
\end{abstract}

\vspace*{12pt}

\tableofcontents

\section{Introduction} \label{sec:intro}

In 1964, André Weil introduced the \textit{Weil representation}  (also called \textit{oscillator representation}), initially for adelic and local fields, see \cite{Weil}. 
Weil showed that this representation encodes deep symmetries linking symplectic and orthogonal structures, and
also observed that the same construction makes sense over finite fields, where everything becomes concrete and combinatorial.

Let $W$ be a finite dimensional vector space over a finite field $k$ of odd characteristic, let $\langle\;,\;\rangle$ be a nondegenerate alternated form on $V$, and let $\Sp(W)$  denote the  symplectic group attached to 
$\langle\;,\;\rangle$. The  Weil representation is a representation of the group $\Sp(W)$, the construction of which relies on the Stone-von Neumann Theorem for the Heisenberg group associated to the vector space $W$. 
In  the local fields setting the Weil representation is only a projective representation, which makes necessary to consider a metaplectic  cover of the symplectic group (see \cite{Weil} or \cite{AP}). The finite fields setting (in the odd characteristic case) is hence simpler in this aspect. 

In the 1970s, Roger Howe introduced in \cite{Howe-finite,How} the concept of \textit{dual pairs} (these are the pairs $(G,G')$ of reductive subgroups of  $\Sp(W)$ such that $G'$ is the  centralizer of $G$ in $\Sp(W)$ and vice-versa) 
and defined a correspondence between the categories of complex representations of these subgroups. 

This correspondence, which is known as the \textit{theta correspondence} or the \textit{Howe correspondence}, is obtained from the restriction of the Weil representation to $G\cdot G'$.
The image under the theta correspondence for a dual pair $(G,G')$ of a given irreducible representation $\pi$ of $G$ is a finite set, say $\{\pi'_1,\ldots,\pi'_{r'}\}$ of irreducible representations $\pi'_i$ of $G'$, which in general,  in contrast with the  the local fields setting,  is not a singleton.  

The explicit description of the theta correspondence over finite fields is a long standing question, which is the main subject of these notes. It has also applications to the theta correspondence over nonarchimedean local fields as shown notably in \cite{Pan0, LM, Au}.  

\smallskip

These notes are organized as follows. Section~\ref{sec:Weil} introduces the Weil representation of $\Sp(W)$, and section~\ref{sec:theta} describes the construction of the theta correspondence and some of its basic properties.

Section~\ref{sec:thetaHC} concerns the ``philosophy of cusp forms" of Harish-Chandra and its application to the theta correspondence. More precisely, let $\bG$ be an arbitrary reductive connected group or a split orthogonal group defined over $k$, and $G$ denote its group of $k$-points. The group $G$ is a finite group of Lie type. 
The category  $\fR(G)$ of  complex representations of $G$ decomposes as a direct product of subcategories $\fR^\fs(G)$, where $\fs$ is the $G$-conjugacy class of a \textit{cuspidal pair} $(L,\sigma)$, that is, $L$ is a Levi subgroup of $G$ and $\sigma$ an irreducible cuspidal representation of $L$. We describe the set $\Irr^\fs(G)$ of isomorphism classes of irreducible objects of $\fR^\fs(G)$, called the \textit{Harish-Chandra series} associated to $\fs$, and, in Theorem~\ref{thm:HC-HC} (proved in \cite{AMR}), we formulate the compatibility of the theta correspondence with the Harish-Chandra series of $G$ and $G'$.

In section~\ref{sec:Lusztig}, we focus on Lusztig's approach of the study of representations of finite groups of Lie type.  Let $\check\bG^{\circ}$ denote the reductive connected group with root datum dual to that of the identity component $\bG^\circ$ of $\bG$. We consider the  partition of the set $\Irr(G)$ of isomorphism classes of irreducible complex representations of $G$ in \textit{Lusztig series} $\cE(G,s)$, where $(s)$ runs over  the $\check G^{\circ}$-conjugacy classes of semisimple elements of $\check G^{\circ}$.  
Let $\bG_s$ be the \textit{endoscopic group} of $\bG$ associated with $s$, that is,  the dual group of the centralizer of $s$ in $\check \bG$. By Lusztig's results  \cite{Lubook,Lus-disconnected-center} (and \cite{AMR} for $\bG=\rO^\pm_{2n}$),  the set $\mathcal{E}(G,s)$ is in bijection with the \textit{unipotent Lusztig series} $\mathcal{E}(G_s,1)$ of $G_s$. We show that the description of the theta correspondence reduces to describe its restriction to unipotent representations.

Section~\ref{sec:unip} describes the irreducible constituents of unipotent Harish-Chandra series for every reductive dual pair $(G,G')$ and expresses the theta correspondence for unipotent representations 
as a correspondence between representations of two finite Weyl groups. These latter was proved for the dual pair of general linear groups and for the dual of unitary groups in \cite{AMR}, and conjectured in \cite{AMR} for ortho-symplectic dual pairs.
The validity of our conjecture has been established independently by S.-Y.~Pan in \cite{PanAJM} and by J.-j. Ma, C. Qiu, and J. Zou in \cite{MQZ}.

The main goal of section~\ref{sec:combi} is to reformulate, following Pan's works, the correspondence between representations of Weyl groups in terms of  Lusztig's combinatorics of symbols. 
Section~\ref{sec:one-to-one} describes several manners to extract a one-to-one correspondence from the theta correspondence for arbitrary dual pairs and representations.

Finally, section~\ref{sec:holo} briefly relates the theta correspondence to the holographic correspondence studies by A. Dymarsky, J. Henriksson and B. McPeak in \cite{DHM}.

\bigskip

\noindent
\textbf{Acknowledgements.} I wish to warmly thank the Institut Henri Poincar\'e for their hospitality during the 2025 Thematic Trimester ``\href{https://indico.math.cnrs.fr/event/10843/}{Representation Theory \& Noncommutative Geometry}". It is a pleasure to thank  
Hung Yean Loke and Tomasz Przebinda, the organizers of the ``Theta Week", for having invited me to give these lectures, and all of the participants for a  very stimulating and pleasing meeting.
 
\section{The Weil representation} \label{sec:Weil}
Let $k=\Fq$ be a finite field with $q$ elements and let  $\overline{k}$ be a fixed algebraic closure of $k$. We suppose $q$ odd. Let $N$ be a positive integer,
Let $\Mat_{2N}(k)$ denote the ring of $2N\times 2N$ matrices with entries in $k$. For  $g\in\Mat_{2N}(k)$, we denote by $g^t$ its transposed.
 
Let $W$ be a $k$-vector space of dimension $2N$, equipped with a non-degenerate alternated form  $\langle\;,\;\rangle$. 
The group of isometries of  $\langle\;,\;\rangle$  is the \textit{symplectic group}:
\begin{equation}
\Sp(W) := \left\{g \in\GL(W) \;:\; \langle g(w_1),g(w_2)\rangle=  \langle w_1,w_2  \rangle\;\text{ for all $w_1,w_2\in W$}\right\}.
\end{equation}

The elements of $\Sp(W)=\Sp_{2n}(k)$ admit the following characterization:
Let $g=\left(\begin{smallmatrix} a& b\cr c& d\end{smallmatrix}\right)\in\GL_{2N}(\Fq)$, with $a,b,c,d\in\Mat_{2N}(k)$. Then 
\begin{align*}
g\in \Sp_{2n}(k)&\;\;\Leftrightarrow\;\;&\text{$a^t c=c^t a$, $b^t d=d^t b$ and $a^t d-c^t b=\rI$}\cr
&\;\;\Leftrightarrow\;\;&\text{$ab^t=ba^t$, $cd^t=dc^t$ and $ad^t -bc^t=\rI$}
\end{align*}

\begin{defn} \label{defn:Heis}
{\rm The Heisenberg group $H(W)$:
As a set, we have
\[H(W):=W\times k=k^{2N+1}.\] 
The multiplication law is defined by
\[(w,t)\cdot (w',t'):=(w+w',t+t'+\frac{1}{2}\langle w,w'\rangle).\]}
\end{defn}

\begin{defn}  {\rm Let $H$ be a finite group. 
\begin{itemize}
\item
The \textit{exponent} of  $H$  is the least positive $n$ s.t. $h^n=1$ $\forall h\in H$.
\item
The \textit{Frattini subgroup} of $H$, denoted by $\Phi(H)$, is  the intersection proper maximal subgroups of $H$.
\item
We denote by $\rZ_H$ the center of $H$ and by $H'$ the derived subgroup of $H$.  
If $H$  is a $p$-group, then $H$ is \textit{special} if either $H$ is elementary abelian or $H$ is of class $2$ (i.e., $H' \subset\rZ_H$), and $H'= \rZ_H = \Phi(H)$ is elementary
abelian. 
\item
The group $H$ is  \textit{extraspecial} if it is special and $|H'|=p$.
\end{itemize} }
\end{defn}

Let  $\rZ_{H(W)}$ denote the center of $H(W)$. It  is isomorphic to $k$.
\begin{remark} {\rm $H=H(W)$ is a special $p$-group with exponent $p$, with derived subgroup $H'=(k,+)$. If $q=p$, with $p$ a prime number, $H$ is an 
extraspecial $p$-group.}
\end{remark}

\begin{thm} \label{thm:StonevN} {\rm [Finite analogue of the Stone-von-Neumann Theorem]} 
For every non-trivial character $\psi$ of $\rZ_{H(W)}$,  there exists a unique  (up to equivalence) irreducible representation $(\varrho_\psi,S)$ of $H(W)$,  known as the \textit{Heisenberg representation}, such that 
\[\varrho_\psi(z)=\psi(z)\,\Id_S\;\;\text{for all $z\in\rZ_{H(W)}$.}\]
\end{thm}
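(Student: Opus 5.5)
The plan is to prove existence by an explicit construction from a Lagrangian, and uniqueness by a character/counting argument. Fix a nontrivial character $\psi$ of $\rZ_{H(W)}\cong k$. Choose a complete polarization $W=X\oplus Y$ into Lagrangian subspaces, which exists because $\langle\;,\;\rangle$ is non-degenerate and alternating. Then $A:=Y\times k$ is a subgroup of $H(W)$: since $\langle Y,Y\rangle=0$, the multiplication restricted to $A$ is just addition, so $A$ is abelian. The function $\psi_A(y,t):=\psi(t)$ is therefore a character of $A$ extending $\psi$.

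For existence, set $\varrho_\psi:=\Ind_A^{H(W)}\psi_A$. This representation has dimension $[H(W):A]=q^N$, and it can be realized on the space $S$ of functions on $X$. Its central character is $\psi$, because $\rZ_{H(W)}$ is central and contained in $A$. To get irreducibility, I will compute $\langle\varrho_\psi,\varrho_\psi\rangle$ with Mackey's formula. The double cosets $A\backslash H(W)/A$ are indexed by $x\in X$. For the coset of $(x,0)$, conjugation acts by $(x,0)(y,t)(x,0)^{-1}=(y,t+\langle x,y\rangle)$. The two characters $\psi_A$ and $\psi_A^{(x,0)}$ therefore agree on $A$ exactly when $\psi(\langle x,y\rangle)=1$ for all $y\in Y$. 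The pairing $X\times Y\to k$ is perfect and $\psi$ is nontrivial, so this happens only for $x=0$. Hence the intertwining number is $1$ and $\varrho_\psi$ is irreducible.

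For uniqueness, I see two routes, and I would use the first. In the first route, the class function $\chi:=\Tr\varrho_\psi$ vanishes off the center. This holds because for $h=(w,t)$ with $w\neq0$, there is some $w'$ with $\psi(\langle w,w'\rangle)\neq1$. Since $\varrho(w',0)$ conjugates $\varrho(h)$ to $\psi(\langle w',w\rangle)\varrho(h)$, we get $\chi(h)=\psi(\langle w',w\rangle)\chi(h)$, so $\chi(h)=0$. The same argument applies to any irreducible $\rho$ with central character $\psi$. Such a character is thus determined by $\rho(z)=\dim\rho\cdot\psi(z)$ on the center, and orthonormality forces $\dim\rho^2\cdot|k|/|H(W)|=1$, that is, $\dim\rho=q^N$. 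Any two such characters coincide, so the representations are isomorphic.

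The second route is a counting argument. Irreducibles with central character $\psi$ are exactly the constituents of $\Ind_{\rZ}^{H}\psi$, whose dimension is $q^{2N}$. By Frobenius reciprocity and the previous paragraph, each constituent $\rho$ appears with multiplicity $\dim\rho=q^N$, so it contributes $q^{2N}$ and there is room for only one. The main point requiring care is the Mackey computation: the group-law convention with the factor $\tfrac12$ makes the conjugation formula pick up $\langle x,y\rangle$ rather than $\tfrac12\langle x,y\rangle$, and this is where $q$ odd is used.
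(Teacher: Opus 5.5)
Your proof is correct and complete. It takes a genuinely different route from the paper, which gives no argument of its own and simply refers to \cite[Theorem~2.I.2]{MVW}. That treatment is set up to work beyond finite fields, notably over $p$-adic fields, where counting arguments and orthogonality of characters are not available.

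You use finiteness directly. Irreducibility of $\Ind_A^{H(W)}\psi_A$ comes from Mackey's criterion. Here $A=Y\times k$ contains $[H(W),H(W)]=\rZ_{H(W)}$, so it is normal, and the double cosets are just $H(W)/A\cong X$. The conjugation formula $(x,0)(y,t)(x,0)^{-1}=(y,t+\langle x,y\rangle)$ is right. Uniqueness comes from the commutator trick together with character orthogonality. These show that any irreducible $\rho$ with central character $\psi$ has character $q^N\psi$ on $\rZ_{H(W)}$ and $0$ elsewhere, which determines $\rho$.

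What your route buys is a self-contained proof, the explicit Schrödinger model on functions on $X$, and an explicit character formula for $\varrho_\psi$. The cited route buys uniformity with the local-field setting that the rest of the theta-correspondence literature relies on.

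One clarification of your closing remark. It does not matter whether conjugation produces $\langle x,y\rangle$ or some other nonzero multiple of it, since a nonzero multiple of a perfect pairing is still perfect. The oddness of $q$ is used to make sense of $\tfrac12$ in the group law. It is also what gives $\rZ_{H(W)}=\{0\}\times k\cong k$, the identification you use at the start.
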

\begin{proof}
See  \cite[Theorem~2.I.2]{MVW}.
\end{proof}

The group $\Sp(W)$ acts as a group of automorphisms of $H(W)$ by the rule:
\begin{equation}
g\cdot (w,t):=(g(w),t)\quad\text{for $g\in \Sp(W)$ and $(w,t)\in H(W)$}.
\end{equation}
Let $g\in \Sp(W)$. The action of $\Sp(W)$ on $H(W)$ fixes the elements of its center. Hence, for a fixed character $\psi$ of $k$, the representations $\varrho_\psi$ and $g\cdot \varrho_\psi$ agree on $\rZ_{H(W)}$, for any $g\in\Sp(W)$.  
Theorem~\ref{thm:StonevN} implies that there is an operator $\omega_\psi(g)$ verifying
\begin{equation} \label{eqn:Weil}
\varrho_\psi(g\cdot w,t) = \omega_\psi(g)\varrho_\psi(w,t)\omega_\psi(g)^{-1}.
\end{equation}
This defines a projective representation $\omega_\psi$ of $\Sp(W)$, which can be lifted to an actual representation of $\Sp(W)$, still denoted by $\omega_\psi$, since $H^2(\Sp(W),\CC^\times)=0$.

\begin{defn} \label{defn:Weil}
The representation $\omega_\psi$ of $\Sp(W)$ is the \textit{Weil representation} (or \textit{oscillator representation}) determined by the additive character $\psi$.
\end{defn}

\begin{remark} {\rm 
For $a\in k$, let $\psi_a\colon \rZ_{H(W)}\to\CC$ be the character  defined by $\psi_a(x):=\psi(ax)$. 
\begin{itemize}
\item 
For $b\in k^\times$, the representations $\omega_{\psi_a}$ and $\omega_{\psi_{ab^2}}$ are isomorphic. 
\item
The representations $\omega_{\psi_a}$, as $a$ runs over representatives for $k^\times/(k^\times)^2$, are not isomorphic.
\end{itemize}
}
\end {remark}

\section{The theta correspondence} \label{sec:theta}

\subsection{Reductive dual pairs}\

\begin{defn} {\rm
A pair of reductive subgroups of $\Sp(W)$,  is called  a \textit{dual pair} if 
\[\Cent_{\Sp(W)}(G)=G'\quad\text{and}\quad \Cent_{\Sp(W)}(G')=G.\]
}
\end{defn}

We will focus on irreducible dual pairs, because these are the building blocks of all the others.  Such pairs are of the following kinds: 
\begin{itemize}
\item[$\bullet$] pairs of type I:
\begin{itemize}
\item[$\circ$] $(\Sp_{2m}(k),\rO_{n'}(k))$ with $mn'=N$;
\item[$\circ$] $(\rU_n(k),\rU_{n'}(k))$ with $nn'=2N$;
\end{itemize}
\item[$\bullet$]
pairs of type II: 
\begin{itemize}
\item[$\circ$] $(\GL_n(k),\GL_{n'}(k))$ with $nn'=2N$.
\end{itemize}
\end{itemize}

The dual pairs formed by a symplectic group and an orthogonal group are more precisely described as follows.
\begin{itemize}
\item  $n$ and $n'$ be two fixed non-negative integers
\item $V=V_n$ a $k$-vector space of (even) dimension $n=2m$, endowed with a non-degenerate alternated
$k$-bilinear form $\langle \,,\,\rangle$.
The corresponding group of isometries  is the symplectic group \textit{$G:=\Sp(V_n)$}.
\item $V'=V'_{n'}$  a  $k$-vector space of dimension $n'$ over $k$, endowed with a non-degenerate symmetric
$k$-bilinear form $\langle \,,\,\rangle'$.
The orthogonal group \textit{$G':=\rO(V_{n'})$} is the corresponding group of
isometries.
\item  $(G,G')$ is a dual pair in $\Sp(W)$ where $W:=V_n\otimes_k V'_{n'}$ with
\[\langle\langle v_1\otimes v_1',v_2\otimes v_2'\rangle\rangle:=\langle v_1,v_2\rangle\cdot  \langle v_2',v_1'\rangle',\]
for $v_1,v_2\in V$ and $v_1',v_2'\in V'$.
\end{itemize}

\begin{defn} \label{defn:stable-range} {\rm
A reductive dual pair $(G,G')$ is said to be in the \textit{stable range} (with $G'$ smaller) if the defining vector space for $G$ has a totally isotropic subspace of dimension 
greater or equal than the dimension the defining vector space for $G'$.
}
\end{defn}

\begin{examples} {\rm
The pair $(\Sp_{2m}(k),\rO^\epsilon_{2m'}(k))$ is in the stable range if and only if  $m\ge 2m'$. 
Consider now a pair  $(G_n,G_{n'}')=(\rO^\epsilon_{2m'}(k),\Sp_{2m}(k))$, with $\rO_{2m'}(k)=\rO(V_{2m'})$. The maximum dimension of an isotropic subspace of $V'_{2m'}$ with respect to $\langle\,,\,\rangle'$ is 
either $m'$ or $m'-1$, giving rise to the two orthogonal groups $\rO^+_{2m'}(k)$ and $\rO_{2m'}^-(k)$, respectively. Thus, the pair $(\rO^+_{2m'}(k),\Sp_{2m}(k))$ 
is in the stable range if and only if  $m'\ge 2m$, and $(\rO^-_{2m'}(k),\Sp_{2m}(k))$ is in the stable range if and only if  $m'-1\ge 2m$. 
 
Note that, if $\rO(V_{n'})=\rO_{2m'+1}(k)$, then the maximum dimension of an isotropic subspace of $V'_{2m'+1}$ is $m'$, and the pair $(\rO_{2m'+1}(k),\Sp_{2m}(k))$ is in the stable range 
if and only if  $m'\ge 2m'$.}
\end{examples}

We will introduce a few more notation in order to be able consider all possible irreducible reductive dual pairs. Let $\xi$  denote the unique quadratic character of $k^\times$, that is,
\begin{equation}
\xi\colon k^\times \to \{\pm 1\}, \quad a\mapsto a^{\frac{q-1}{2}}.
\end{equation}
Let $\tau$ be an element in $\Gal(k/\FF_p)$ such that $\tau^2$ is the identity, and let $k_0$ denote the fixed field of $\tau$.

For $\varepsilon\in\{\pm 1\}$, a \textit{$\varepsilon$-Hermitian space} $V$ (over $k$) is a $k$-vector space equipped with a
non-degenerate form $\langle\;,\;\rangle_V\colon V\times V\to k$ such that
\begin{equation}
\langle av_1+v_2,v_3\rangle_V =a\langle v_1,v_3\rangle_V +\varepsilon \langle v_3,v_2\rangle_V^\tau , \quad\text{for all $a\in k$, and $v_1,v_2,v_3 \in V$.}
\end{equation}
Let $m=\dim V$ and let
\begin{equation}
G(V):=\left\{g\in\End_k(V)\;:\;\langle g(v_1),g(v_2)\rangle_V =\langle v_1,v_2\rangle_V ,\;\text{ for all $v_1,v_2 \in V$}\right\}
\end{equation}
be the isometry group of  the form $\langle\cdot,\cdot\rangle_V$. If $k=k_0$ and $\varepsilon = 1$, we choose a basis $\{e_1,\ldots ,e_m\}$ of $V$ and define the \textit{discriminant} of $V$ by
\begin{equation}
\disc(V):=(-1)^{\frac{m(m-1)}{2}}\det\left(\langle e_i,e_j\rangle_V\right)\in k^\times/k^{\times 2}.
\end{equation}
Note that $\disc(V)$ is independent of the choice of the basis $\{e_1,\ldots ,e_m\}$.

\begin{remark}
{\rm The group $\rO_m^\epsilon$ is the isometry group of $V$ with $\disc(V)=\epsilon\, 1$. For even $m$, the  orthogonal group $\rO_m^\epsilon$ is split if and only if $\epsilon=+$. For odd $m$,
there is a unique orthogonal group $\rO_m$ as an abstract groups; however, it acts on two symmetric spaces with different discriminants.}
\end{remark}

\smallskip

According to different choices of $k$ and $\varepsilon$, we have the following cases:
\begin{enumerate}
\item
$k\ne k_0$:  $(G,G')=(G(V ),G(V'))$ is a unitary dual pair;
\item
$k=k_0$, $\varepsilon=1$ and $\dim V$ is odd: $(G,G')=((G(V ),G(V'))$  is an odd orthogonal-symplectic dual pair;
\item
$k=k_0$,  $\varepsilon=-1$ and $\dim V'$ is even: $(G,G')=((G(V ),G(V'))$  is a symplectic-even orthogonal dual pair;
\item
$k=k_0$, $\varepsilon= -1$ and $\dim V'$ is odd: $(G,G')=((G(V ),G(V'))$  is a symplectic-odd orthogonal dual pair. 
\item
$k=k_0$,  $\varepsilon=1$ and $\dim V$ is even: $(G,G')=((G(V ),G(V'))$ is an even orthogonal- symplectic dual pair.
\end{enumerate}

\subsection{Definition of the theta correspondence}\

Let $\psi$ be a nontrivial additive character of $k$.
G\'erardin introduced in  \cite[Theorem~2.4]{Ger} a slightly different  version $\omega_\psi^\flat$  of $\omega_\psi$.

When $(G,G')$ is a pair formed by a symplectic and an orthogonal group, the representations $\omega_\psi$ and $\omega_\psi^\flat$ have the same restriction to $G\cdot G'$.
In the other cases, the restrictions of $\omega_\psi$ and $\omega_\psi^\flat$ differ by a representation of $G$ with values in $\{\pm 1\}$. 

From now on we suppose that the character $\psi$ is fixed. Let $\omega^\flat_{G,G'}$ denote the restriction of $\omega^\flat_\psi$ to $G\cdot G'$.

\smallskip

Let $(G,G'):=(G(V),G(V')$. In \cite{MQZ}, a modified version $\omega_{G,G'}$ of $\omega_{G,G'}^\flat$ is used. It  is defined as follows:
\begin{equation} \label{eqn:omegaGGdef}
\omega_{G,G'}:=
\begin{cases}
\left((\xi\circ \det_{G})^{\frac{\dim V'}{2}}\otimes 1_{\G'}\right)\otimes\omega^\flat_{G,G'}&\text{in cases (2) and (5)}\cr
\left(1_{G}\otimes(\xi\circ \det_{G'})^{\frac{\dim V}{2}}\right)\otimes\omega^\flat_{G,G'}&\text{in cases (3) and (4)}.
\end{cases}
\end{equation}
The representation $\omega_{G,G'}$ has the form
\begin{equation} \label{eqn:omegaGG}
\omega_{G,G'}=\sum_{\pi\in\Irr(G)\atop\pi'\in\Irr(G')}\mult_{\pi,\pi'}\,\,\pi\otimes\pi',\;\;\text{where $\mult_{\pi,\pi'}\in \ZZ_{\ge 0}$}.
\end{equation}
We set
\begin{equation} \label{eqn:thetaGG}
\Theta_{G,G'}:=\left\{(\pi,\pi')\in\Irr(G)\times\Irr(G')\,:\,\mult_{\pi,\pi'}\ne 0\right\}.
\end{equation}
Hence $\Theta_{G,G'}$ defines a \textit{correspondence} between $\Irr(G)$ and $\Irr(G')$.  This correspondence is \textit{symmetric}, that is, $(\pi,\pi')\in\Theta_{G,G'}$ if and only if $(\pi',\pi)\in\Theta_{G',G}$.

\begin{defn} \label{defn:Theta} {\rm Let $\pi\in\Irr(G)$. We say that the representation \textit{$\pi$ occurs in the  theta correspondence for the dual pair $(G,G')$} if  there exists $\pi'\in\Irr(G')$ such that $(\pi,\pi')\in\Theta_{G,G'}$.}
\end{defn}

For $\pi\in\Irr(G)$, we  define its \textit{theta lift}
\begin{equation}\theta_{G'}(\pi):=\bigoplus_{\text{$\pi'\in\Irr(G')$ such that $(\pi,\pi')\in \Theta_{G,G'}$}}\pi',
\end{equation}
and we extend $\theta_{G'}$ by linearity into a map 
\begin{equation} \label{eqn:thetaG}
\theta_{G'}\colon \ZZ\,\Irr(G)\to \ZZ\,\Irr(G').
\end{equation}

In general, if nonzero, $\theta_{G'}(\pi)$ is reducible, and the main goal of these notes is to obtain an explicit description of the representation $\theta_{G'}(\pi)$.

On the other side, since the correspondence $\Theta_{G,G'}$ is in general not one-to-one, a natural question is: can we find an interesting sub-relation of $\Theta_{G,G'}$ which is one-to-one.

\subsection{Witt towers of theta correspondences}\

The non-vanishing question of theta lifts for (Type I dual pairs) can be approached by considering theta lifts in the Witt tower of the the target space.

Let $\bbH$ denote the unique (up to isomorphism) $2$-dimensional $\varepsilon'$-Hermitian vector space containing a non-zero isotropic vector. We call $\bbH$  the hyperbolic $\varepsilon'$-Hermitian plane.

\begin{defn} {\rm 
 A set $\cT'$ of $\varepsilon'$-Hermitian vector spaces is called a \textit{Witt tower} if there is an anisotropic  $\varepsilon'$-Hermitian vector  space $V'_\an$  such that $\cT'$ consists of all $\varepsilon'$-Hermitian vectors spaces $V'(r')$
 satisfying
\[V'(r')\simeq V'_\an\oplus \underbrace{\bbH\oplus\cdots\oplus\bbH}_{r'}.\] 
The integer $r'$ above is called the \textit{split rank} of  $V'(r')$ and the parity of $\dim V'_\an$ is called the \textit{parity of $\cT'$}.
}
\end{defn}
If $k=k_0$ and $\varepsilon'=1$, we have $\disc(V'(r'))=\disc( V'_\an)$ for every integer $r'\ge 0$.
Therefore, we define 
\begin{equation}
\disc(\cT'):= \disc( V'_\an),
\end{equation} and call it the discriminant of $\cT'$ in this case.
\begin{remark} \

{\rm
\begin{enumerate}
\item For even orthogonal groups there are two Witt towers depending if $G'_{n'}=\rO(V_{n'})$ is split or not. We denote them $\rO^+$ and $\rO^-$, respectively, and write $\rO_{2n}^\epsilon(k)$ for the corresponding orthogonal group
\item
For odd orthogonal groups there are two Witt towers as well $\left\{\rO_{2n+1}^\epsilon(k)\right\}_{n\ge 0}$, with $\epsilon=\pm$.
\item
For unitary groups there are two Witt towers $\rU^+:= \left\{\rU_{2m}(k)\right\}_{n\ge 0}$ 
and $\rU^-: = \left\{\rU_{2m+1}(k)\right\}_{n\ge 0}$. We will set $\rU^+_n(k):=\rU_{2m}(k)$ if $n=2m$ and  $\rU^-_n(k):=\rU_{2m+1}(k)$ if $n=2m+1$.
\end{enumerate}}
\end{remark}

For a reductive dual pair $(G,G')=(G(V),G(V')$, let $\cT'$ be the Witt tower containing $V'$. We define the generalized Witt tower $\cT_G'$ to be the collection
\begin{equation}
\cT'_G:=\left\{(V'(r'),\omega_{G(V),G(V'(r'))})\;:\,r'\in \ZZ_{\ge 0}\right\}.
\end{equation}

The \textit{companion generalized Witt tower of  $\cT'_G$} is defined as follows:
\begin{itemize}
\item[$\bullet$]
In case (1), there exists a unique Witt tower 
$\widetilde\cT':=\left\{\widetilde V'(r')\;:\,r'\in \ZZ_{\ge 0}\right\}$ such that  $\cT'$ and $\widetilde\cT'$ have a different parity. We set
\[\widetilde\cT'_G:=\left\{(\widetilde V',\omega_{G,\rU(\widetilde V')})\;:\,\widetilde V'\in\widetilde\cT'\right\}.\]
\item[$\bullet$]
In cases (3) and (4), there exists a unique Witt tower $\widetilde\cT'':=\left\{\widetilde V'(r')\;:\,r'\in \ZZ_{\ge 0}\right\}$ such that $\cT'$ and $\widetilde\cT'$ have the same parity and different discriminants. We set
\[\widetilde\cT'_G:=\left\{(\widetilde V',\omega_{G,\rU(\widetilde V')})\;:\,\widetilde V'\in\widetilde\cT'\right\}.\]
\item[$\bullet$]
In cases (2) and (5), there is only one Witt tower of symplectic spaces.  We write $\widetilde\cT':=\cT'$ and set
\[\widetilde\cT'_G:=\left\{(\widetilde V',\omega_{G,G(V')}\otimes (\deter_{G}\otimes 1_{G(V')}))\;:\, V'\in\cT'\right\}.\]
\end{itemize}

\begin{defn} \label{defn:first-occurr} {\rm 
Let $\pi\in\Irr(G)$.
\begin{itemize}
\item 
The smallest non-negative integer $n'_{\cT'}(\pi)$ such that $\theta_{n'_{\cT'}(\pi)}(\pi)\ne 0$ is called the \textit{first occurrence index of $\pi$ with respect to the Witt tower $\cT'$}.
\item The representation 
\[\theta^0_{\cT'}(\pi):=\theta_{n'_{\cT'}(\pi)}(\pi)\] is called the \textit{first occurrence of $\pi$ with respect to  $\cT'$}.
\end{itemize} 
If the Witt tower $\cT'$, is fixed, we will  simply write $\theta^0(\pi)$ for $\theta^0_{\cT'}(\pi)$.}
\end{defn}

Similarly, for $G'=G_{n'}'$ with $n'$ fixed, one can then consider the tower of the theta correspondences associated to the tower of reductive dual pairs $(G_{m}, G')_{m\ge 0}$. 
For every $\pi'\in\Irr(G')$, we have the representation $\theta_{m'}(\pi)$. 

The smallest non-negative integer $n_{\cT}'$ such that $\theta_{n_{\cT}(\pi')}(\pi')\ne 0$ is called the \textit{first occurrence index of $\pi'$ with respect to the Witt tower $\cT$}, and the representation  
\[ \theta^0_{\cT}(\pi'):=\theta_{n_{\cT}(\pi')}(\pi')\]  the \textit{first occurrence of $\pi'$ with respect to $\cT$}. 

\begin{prop}
For every $\pi\in\Irr(G)$ and $\pi'\in\Irr(G')$, such integers $n'_{\cT'}(\pi)$ and $n_{\cT}(\pi')$ exist, and we have $n_{\cT'}'(\pi)\le n'$ and $n_{\cT}(\pi')\le n$. 
Moreover, we have $\theta_{m'}(\pi)\ne 0$ for any $m'\ge n'_{\cT'}(\pi)$ 
and $\theta_{m}(\pi')\ne 0$ for any $m\ge n_{\cT}(\pi')$.
\end{prop}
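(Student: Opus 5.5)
The plan has three parts: an existence statement, a persistence statement, and then the bound $n'_{\cT'}(\pi)\le n'$ (and symmetrically $n_{\cT}(\pi')\le n$), which I derive from the first two. I treat the tower $\cT'=\{V'(r')\}$ containing $V'$; the tower $\cT$ for $\pi'$ is handled identically with the roles of $G$ and $G'$ exchanged, using the symmetry of $\Theta_{G,G'}$.

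\textbf{Existence.} I use a mixed Schrödinger model. Write $V'(r')=V'_\an\oplus X'\oplus Y'$, with $X',Y'$ totally isotropic of dimension $r'$ in duality. Then $W=V\otimes V'(r')$ has the polarization $(V\otimes X')\oplus(V\otimes Y')$ on the hyperbolic part. On the space of functions on $V\otimes Y'\cong\Hom(X',V)$ tensored with the Weil representation of $V\otimes V'_\an$, the group $G$ acts geometrically, up to the character twist of \eqref{eqn:omegaGGdef}. Once $r'\ge\dim V$, the set of surjections $X'\to V$ is a nonempty $G$-stable subset on which $G$ acts freely. Hence the restriction of $\omega_{G,G(V'(r'))}$ to $G$ contains the regular representation of $G$. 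So every $\pi\in\Irr(G)$ satisfies $\theta_{r'}(\pi)\neq 0$ for such $r'$, and the integer $n'_{\cT'}(\pi)$ exists. The same argument applied to $G'$ shows that $n_{\cT}(\pi')$ exists.

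\textbf{Persistence.} Let $P'$ be the parabolic subgroup of $G(V'(r'+1))$ stabilizing an isotropic line $\ell'$, with Levi factor $\GL_1\times G(V'(r'))$. Kudla's filtration of the Jacquet module of $\omega_{G,G(V'(r'+1))}$ along the unipotent radical of $P'$ has as top quotient (or bottom piece, depending on normalization) the representation $\omega_{G,G(V'(r'))}$, inflated and twisted by a character of $\GL_1$. If $\pi\otimes\pi'_0$ occurs in $\omega_{G,G(V'(r'))}$, then Frobenius reciprocity gives a nonzero $G$-map from $\omega_{G,G(V'(r'+1))}$ to $\pi\otimes\Ind_{P'}(\chi\otimes\pi'_0)$. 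This yields an irreducible constituent $\pi'_1$ with $(\pi,\pi'_1)\in\Theta$. Since representations of finite groups are semisimple, no exactness issues arise, and it suffices to exhibit a nonzero $\Hom_G(\omega,\pi)$ at level $r'+1$. By induction, $\theta_{m'}(\pi)\ne 0$ for all $m'\ge n'_{\cT'}(\pi)$, and likewise $\theta_m(\pi')\ne 0$ for all $m\ge n_{\cT}(\pi')$.

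\textbf{The bound $n'_{\cT'}(\pi)\le n'$.} By the persistence step and the definition of the first occurrence as a minimum, it is enough to show that $\pi$ occurs at the tower level $n'$ named in the statement. Here I read $n'$ as the tower index attached to the dimension of the space relevant to the pair, and I need $\omega_{G,G(V'(n'))}|_G$ to contain every $\pi$. I would derive this from the regular-representation argument above. This requires checking that at level $n'$ there is an isotropic $X'$ of dimension at least $\dim V$. Equivalently, the pair must be in the stable range of Definition~\ref{defn:stable-range} with $G$ smaller, as in the Examples. The bound $n_{\cT}(\pi')\le n$ follows in the same way by exchanging roles.

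\textbf{Main obstacle.} I expect the hardest step to be this last index bookkeeping: matching the indexing convention of $n'$ and $n$ in the statement with the dimension condition that produces a free $G$-orbit. The persistence step is standard, but one must track the determinant twists in \eqref{eqn:omegaGGdef} so that the filtration quotient is literally $\omega_{G,G(V'(r'))}$ up to a character of $\GL_1$.
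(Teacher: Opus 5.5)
The paper states this proposition without proof; it is recorded as background. So there is no argument of the paper to compare yours with. Judged on its own merits, your proposal is correct.

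\textbf{Existence and the bound.} The key step is existence. In the mixed model, $\omega_{G,G(V'(r'))}|_G$ is $\omega_{V\otimes V'_\an}|_G$ tensored with a character and with the permutation representation on $\Hom(X',V)$. Since $G$ acts freely on surjections, this contains a multiple of the regular representation as soon as $r'\ge\dim V$. This already gives the bound, because $n'_{\cT'}(\pi)$ is defined as a minimum. In split-rank indexing you get $n'_{\cT'}(\pi)\le\dim V$, and likewise $n_{\cT}(\pi')\le\dim V'$, without invoking persistence.

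The ``main obstacle'' you flag is a defect of the statement, not of your argument. Read literally, with $n'$ the index of the given $G'=G'_{n'}$, the inequality is false. For example, for $V'=0$ the Weil representation is trivial and only $1_G$ occurs. So your stable-range reading is the only tenable one.

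\textbf{Persistence.} Your route through Kudla's filtration works. Over a finite field, the functions supported at $0$ in the mixed model give $\omega_{G,G(V'(r'))}$, twisted by a character of $\GL_1$, as a direct summand of the $N'$-coinvariants. Frobenius reciprocity then gives a $G\times G(V'(r'+1))$-equivariant map, not merely a $G$-equivariant one.

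A lighter argument avoids Jacquet modules entirely. From $V\otimes V'(r'+1)=(V\otimes V'(r'))\oplus(V\otimes\bbH)$ one gets
\[\omega_{G,G(V'(r'+1))}|_{G\times G(V'(r'))}\cong\omega_{G,G(V'(r'))}\otimes\omega_{V\otimes\bbH}|_G .\]
In the Schr\"odinger model on functions on $V\otimes\ell'\cong V$, with $\ell'$ an isotropic line of $\bbH$, the second factor is the permutation representation of $G$ on $V$. This holds up to a character that is trivial after the normalization \eqref{eqn:omegaGGdef}; in cases (2) and (5), the extra factor $\xi\circ\det_G$ there is exactly what cancels the Siegel-Levi character. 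The permutation representation contains $1_G$ (the delta function at $0$). Hence $\omega_{G,G(V'(r'))}|_G\subset\omega_{G,G(V'(r'+1))}|_G$, which is persistence directly. Both routes need the same character bookkeeping you mention, and nothing more.
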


\section{Theta correspondence and Harish-Chandra series} \label{sec:thetaHC}

\subsection{Harish-Chandra series of finite groups of Lie type}
Let $k$ be  a finite field and let $\overline{k}$ be a fixed algebraic closure of $k$, and let $\bG$ be a connected reductive algebraic group defined over $k$, and let $G=\bG(k)$ the group of the $k$-rational points of $\bG$. We denote by $(\;,\;)_{G}$  the usual scalar product on the space of class functions on $G$: 
\begin{equation} \label{eqn:scalar product}
(f_1,f_2)_{G} := |G|^{-1}\sum_{g\in G}f_1(g)\,\overline{f_2(g)} .
\end{equation}
Let $\Irr(G)$ denote the set of equivalence classes of irreducible representations of $G$. For $\pi\in\Irr(G)$, we will also write $\pi$ for its character.

Let $P=L\ltimes U$ be a parabolic subgroup of $G$ with Levi factor $L$ and unipotent radical $U$.

\begin{defn} \
{\rm
\begin{itemize}
\item The \textit{parabolic induction functor} is the composition of the inflation functor $\infl_L^P$ with the induction $\Ind_P^G$:
\[\ii_{L,P}^G\colon\fR(L)\xrightarrow{\infl_L^P}\fR(P)\xrightarrow{\Ind_P^G}\fR(G).\]
\item The \textit{parabolic restriction functor} is the composition of the restriction functor $\Res_{P}^G$ with the functor of $U$-(co)invariants:
\[ \rr_{L,P}^G\colon\fR(G)\xrightarrow{\Res_L^P}\fR(P)\xrightarrow{\text{$U$-coinvariants}}\fR(L).\]
\end{itemize}
}
\end{defn}

The functors $\ii_{L,P}^G$ and $\rr_{L,P}^G$ are both exact (see \cite[Corollary 3.1.6]{GMbook}), and $r_{L,P}^G$ is the left (and right) adjoint of $\ii_{L,P}^G$.

\begin{defn} {\rm
A representation $\pi$ of $G$ is \textit{cuspidal} if $\rr_{L,P}^G(\pi)=0$ for any proper parabolic subgroup $P$ of $G$.

Actually, $\ii_{L,P}^G$ and $\rr_{L,P}^G$ do not depend on the choice of of the parabolic $P$ with Levi subgroup $L$ (see \cite[Theorem 5.3.1]{DMbook} or \cite[Theorem 3.1.13]{GMbook}). Hence, from now one, we will simply denote them by $\ii_{L}^G$ and $\rr_{L}^G$.

Let $\Irr_\cc(G)$ denote the set of isomorphism classes of cuspidal irreducible representation of $G$.}
\end{defn}

\begin{thm} 
Every $\pi\in\Irr(G)$  occurs as an irreducible component of a parabolically induced representation $\ii_{L}^G(\sigma)$, where $L$ the Levi factor $L$ of a parabolic subgroup of $G$ and $\sigma$ is a cuspidal irreducible  representation of $L$.

\smallskip
\noindent
The  $G$-conjugacy class $(L,\sigma)_G$ of $(L,\sigma)$ is uniquely determined and is  called the \textit{cuspidal support} of $\pi$. It is denoted by $\Sc(\pi)$.
\end{thm}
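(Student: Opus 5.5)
The plan has two parts: existence by induction on the semisimple rank of $G$, and uniqueness via the Mackey formula for $\rr_M^G\circ\ii_L^G$.

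\emph{Existence.} Let $\pi\in\Irr(G)$. If $\pi$ is cuspidal, take $(L,\sigma)=(G,\pi)$. Otherwise there is a proper parabolic $P=L\ltimes U$ with $\rr_L^G(\pi)\ne 0$. Among all such parabolics choose one with $L$ minimal for inclusion, and let $\sigma$ be an irreducible constituent of $\rr_L^G(\pi)$. Transitivity of parabolic restriction, $\rr_M^L\circ\rr_L^G=\rr_M^G$ for $M\subset L$ a Levi of a parabolic of $L$, together with minimality of $L$ and exactness, shows that $\sigma$ is cuspidal. Frobenius reciprocity (adjunction) gives
\[\Hom_G\bigl(\pi,\ii_L^G(\sigma)\bigr)\simeq\Hom_L\bigl(\rr_L^G(\pi),\sigma\bigr)\ne 0.\]
Since all representations are semisimple (characteristic zero coefficients), $\pi$ is then a constituent of $\ii_L^G(\sigma)$.

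\emph{Uniqueness.} Suppose $\pi$ occurs in both $\ii_L^G(\sigma)$ and $\ii_M^G(\tau)$ with $\sigma,\tau$ cuspidal. By adjunction,
\[0\ne\Hom_G\bigl(\ii_M^G(\tau),\ii_L^G(\sigma)\bigr)\simeq\Hom_M\bigl(\tau,\rr_M^G\ii_L^G(\sigma)\bigr).\]
Apply the Mackey formula: $\rr_M^G\circ\ii_L^G$ is a sum, over double cosets $w\in W_M\backslash W(L,M)/W_L$ (or $P_M\backslash G/P_L$), of terms $\ii^M_{M\cap {}^wL}\circ\mathrm{ad}(w)\circ\rr^L_{L\cap M^w}$. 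Since $\sigma$ is cuspidal, $\rr^L_{L\cap M^w}(\sigma)=0$ unless $L\cap M^w=L$, i.e.\ ${}^wL\subset M$. Symmetrically, since $\tau$ is cuspidal and appears in $\ii^M_{{}^wL}({}^w\sigma)$, adjunction forces ${}^wL=M$ (else $\rr^M_{{}^wL}(\tau)$ would vanish). Then $\tau$ occurs in ${}^w\sigma$, so $\tau\simeq{}^w\sigma$, and $(L,\sigma)$ and $(M,\tau)$ are $G$-conjugate.

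The main obstacle is the Mackey formula itself, specifically the Bruhat-type decomposition of $P_M\backslash G/P_L$ with representatives normalizing a common torus, so that each double coset contributes the stated composite. I would invoke it from \cite{DMbook} or \cite{GMbook} rather than reprove it, along with independence of the parabolic already quoted. For the disconnected orthogonal case the same argument goes through, since all that is used is adjunction, exactness, transitivity and the Mackey formula.
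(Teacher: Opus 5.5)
Your argument is correct, and it is essentially the standard proof found in the references the paper cites for this statement (\cite[\S5.3]{DMbook}, \cite[Corollary~3.1.17]{GMbook}): existence comes from a Levi subgroup minimal for $\rr_L^G(\pi)\ne 0$ together with adjunction, and uniqueness comes from the Mackey formula, where cuspidality of $\sigma$ and of $\tau$ kills every term except those with ${}^wL=M$. The paper gives no argument of its own beyond these citations, so there is nothing further to compare.
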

\begin{proof} See  \cite[\S5.3]{DMbook} or \cite[Corollary 3.1.17]{GMbook}.
\end{proof}

\begin{defn}{\rm
Let $\sigma\in\Irr_\cc(L)$. We set
\begin{equation} \label{eqn:HCs}
\Irr^\sigma(G):=\left\{\pi\in\Irr(G)\,:\,\sigma\in\Sc(\pi)\right\}.
\end{equation}
The set $\Irr^\sigma(G)$ is the \textit{Harish-Chandra series} of $\sigma$.}
\end{defn}

Let $\sigma\in\Irr_\cc(L)$. The group
\begin{equation} \label{eqn:Wsigma} 
W_{\sigma}:=\left\{n\in\rN_G(L)/L\,;\,{}^w\sigma\simeq\sigma\right\}
\end{equation}
decomposes as 
\begin{equation} \label{eqn:Dec} 
W_{\sigma}=W_{\sigma}^\circ\rtimes R_{\sigma},
\end{equation}
where $W_{\sigma}^\circ$ is a finite Weyl group, with root system denoted by $\Phi_{\sigma}$ and 
\begin{equation} \label{eqn:Rsigma} 
R_{\sigma}:=\left\{w\in W_{\sigma}\,:\, w \Phi_{\sigma}^+\subset \Phi_{\sigma}^+\right\}.
\end{equation}

\begin{remark}{\rm If $G$ has connected centre, then $W_{\sigma}$ is a Coxeter group, that is,  $W_{\sigma}=W_{\sigma}^\circ$ (see \cite[Chapter~8]{Lubook}).}
\end{remark}

\begin{thm} \label{thm:HC:W}
Let $\sigma\in\Irr_{\cc}(L)$. 
The set $\Irr_\sigma(G)$ is in bijection with the isomorphism classes of irreducible representations of $W_{\sigma}$.
\end{thm}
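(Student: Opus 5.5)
The plan is the classical Harish-Chandra/Howlett--Lehrer route through the endomorphism algebra of the induced representation. Fix $\sigma\in\Irr_\cc(L)$ and set
\[\mathcal{H}(G,L,\sigma):=\End_G\bigl(\ii_L^G(\sigma)\bigr).\]
By definition of the cuspidal support, $\Irr^\sigma(G)$ is exactly the set of irreducible constituents of $\ii_L^G(\sigma)$. Since we work with complex representations of a finite group, $\ii_L^G(\sigma)$ is semisimple. Standard double centralizer theory then identifies its irreducible constituents with the simple modules of $\mathcal{H}(G,L,\sigma)$, via $\pi\mapsto\Hom_G(\ii_L^G(\sigma),\pi)$. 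So it suffices to show that $\mathcal{H}(G,L,\sigma)$ is isomorphic to the group algebra $\CC[W_\sigma]$, possibly twisted by a $2$-cocycle that we must show is trivial.

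The first step is the Mackey formula for $\rr_L^G\circ\ii_L^G$. Because $\sigma$ is cuspidal, every term of the Mackey decomposition indexed by a double coset $LwL$ with $w\notin\rN_G(L)$ involves a proper parabolic restriction of $\sigma$ and so vanishes. This gives
\[\rr_L^G\,\ii_L^G(\sigma)\simeq\bigoplus_{w\in\rN_G(L)/L}{}^w\sigma .\]
By adjunction and Schur's lemma, $\dim\mathcal{H}(G,L,\sigma)=|W_\sigma|$.

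Next we construct a basis $\{T_w\}_{w\in W_\sigma}$. For each $w$, choose an isomorphism ${}^w\sigma\simeq\sigma$ and transport it through the intertwining operators attached to $w$ to get $T_w$, supported on the double coset $PwP$. Following Howlett--Lehrer, for each simple reflection $s$ of $W_\sigma^\circ$ we reduce to the rank-one Levi $L_s\supset L$. There $\End_{L_s}(\ii_L^{L_s}\sigma)$ is $2$-dimensional, so $T_s$ satisfies a quadratic relation $(T_s-p_s)(T_s+1)=0$ with $p_s\ge1$, after normalisation. The $T_s$ satisfy the braid relations of $W_\sigma^\circ$, and $\mathcal{H}(G,L,\sigma)$ is a twisted semidirect product of an Iwahori--Hecke algebra $\mathcal{H}(W_\sigma^\circ,\{p_s\})$ with a twisted group algebra of $R_\sigma$, matching the decomposition \eqref{eqn:Dec}. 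Tits' deformation theorem then shows that this generic Hecke algebra specialises to $\CC[W_\sigma^\circ]$, since it is semisimple and the parameters can be deformed to $1$. Combined with the $R_\sigma$-part, this gives $\mathcal{H}(G,L,\sigma)\simeq\CC[W_\sigma,\mu]$ for a cocycle $\mu$ on $R_\sigma$.

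I expect the main obstacle to be the triviality of the cocycle $\mu$, equivalently the choice of normalisations of the $T_w$ that turn $w\mapsto T_w$ into an honest representation. When the centre of $G$ is connected, $R_\sigma=1$ by the earlier Remark, and the result follows from the previous step. In general I would first try Lusztig's argument \cite{Lus-disconnected-center}: realise $G$ inside a group $\tilde G$ with connected centre via a regular embedding, where the cocycle is trivial, and then use Clifford theory between $\tilde G$ and $G$ to descend. Alternatively, one can invoke Geck's theorem that $\mu$ is always trivial for finite reductive groups. For the disconnected orthogonal case covered by the notes, one checks directly from the explicit cuspidal data that $R_\sigma$ is of order at most $2$. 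The cocycle is then automatically trivial because $H^2$ of a cyclic group with values in $\CC^\times$ vanishes. Finally, composing the two identifications gives the bijection $\Irr^\sigma(G)\leftrightarrow\Irr(W_\sigma)$.
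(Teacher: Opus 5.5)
Your proposal is correct and follows essentially the same route as the paper, whose proof simply cites \cite[Theorems~3.1.18 and~3.2.5]{GMbook}. Those results package, for connected $\bG$, exactly what you sketch: the identification of $\Irr^\sigma(G)$ with the simple modules of $\End_G(\ii_L^G(\sigma))$, the Howlett--Lehrer presentation of this algebra, Tits deformation, and Geck's theorem that the $2$-cocycle is trivial.

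One caveat concerns your closing aside, which the statement (made for connected $\bG$) does not need. For the disconnected group $\rO^+_{2n}(k)$ and a general cuspidal $\sigma$, the group $R_\sigma$ need not have order at most $2$. For instance, take $L=T\subset\rO^+_4(k)$ and $\theta=1\otimes\xi$: then $\Phi_\theta=\emptyset$ and $R_\theta=W_\theta\cong(\ZZ/2)^2$. So the cyclic-$H^2$ shortcut only covers the unipotent cuspidal data actually used later in the notes.
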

\begin{proof}
It follows from the combination of \cite[Theorem 3.1.18]{GMbook} and \cite[Theorem 3.2.5]{GMbook}.
\end{proof}

\subsection{Application to the theta correspondence} \label{subsec:HCT}\

Let $G_n\in\{\Sp_{2n}(k), \rO^\epsilon_{2n}(k), \rO_{2n+1}(k), \rU_m\}$. 

\begin{thm}
If $\pi\in\Irr_\cc(G)$ then its first occurence (with respect to each Witt tower) is also cuspidal:
\[\pi\in\Irr_\cc(G)\;
\Rightarrow\;\theta^0(\pi)\in\Irr_\cc(G'_{n'_{\cT}(\pi)}).\]
\end{thm}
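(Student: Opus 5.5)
This is Kudla's classical argument, and the tool is the standard computation of the Jacquet modules of the Weil representation. Fix $\pi\in\Irr_\cc(G)$ and the Witt tower $\cT'$. Write $n'_0:=n'_{\cT'}(\pi)$ and $G'_{n'_0}=G(V'(r'_0))$. We must show two things for every $\pi'$ with $(\pi,\pi')\in\Theta_{G,G'_{n'_0}}$:
\begin{itemize}
\item[(a)] $\pi'$ is cuspidal;
\item[(b)] $\theta^0(\pi)$ is irreducible.
\end{itemize}
The strategy is to show that any failure of cuspidality would force $\pi$ to appear already at a lower level of the tower, contradicting minimality of $n'_0$.

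\textbf{Step 1: Jacquet modules of $\omega$.} Let $P'_t=M'_t\ltimes U'_t$ be the maximal parabolic subgroup of $G'_{n'_0}$ stabilizing a totally isotropic subspace $X'_t\subset V'$ of dimension $t\ge 1$. Then $M'_t\simeq \GL_t(k_1)\times G'_{n'_0-2t}$, where $k_1=k$ in the orthogonal/symplectic cases and $k_1=k_0$-twisted in the unitary case. Using the mixed (Schrödinger) model of $\omega_{G,G'_{n'_0}}$ attached to the polarization $V\otimes X'_t\oplus V\otimes (X'_t)^*$, I will establish Kudla's filtration. The $U'_t$-coinvariants $\rr_{M'_t}^{G'}(\omega_{G,G'})$, as a $G\times M'_t$-module, admit a filtration indexed by $0\le j\le \min(t,\text{Witt index of }V)$. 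Its $j$-th subquotient is
\[
\Ind_{P_j\times \GL_t\times G'_{n'_0-2t}}^{G\times M'_t}\Bigl(\chi_j\otimes \mathbb{C}[\GL_j(k_1)]\otimes\omega_{G(V_{n-2j}),G'_{n'_0-2t}}\Bigr).
\]
Here $P_j$ is the parabolic subgroup of $G$ stabilizing a $j$-dimensional isotropic subspace, $\mathbb{C}[\GL_j]$ is the regular representation (realized via $\GL_j\times\GL_j$ acting on functions on isomorphisms), and $\chi_j$ is a character. The $j=0$ term is $\omega_{G,G'_{n'_0-2t}}$ twisted by a character of $\GL_t$, carrying the trivial-type action. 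This computation, a direct analysis of the orbits of $P'_t$ on the mixed model, is the main technical obstacle. It requires care with the character twists in \eqref{eqn:omegaGGdef}.

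\textbf{Step 2: Proof of (a).} Suppose $\pi'$ were not cuspidal. Then some $\rr_{M'_t}^{G'}(\pi')\ne 0$ with $t\ge1$, and by Frobenius reciprocity $\pi\otimes\pi'\hookrightarrow\omega$ gives a nonzero $G\times M'_t$-map from $\rr_{M'_t}^{G'}(\omega)$ onto $\pi\otimes\rr_{M'_t}^{G'}(\pi')$. Since $\pi$ is cuspidal, $\Hom_G(\Ind_{P_j}^G(\cdot),\pi)=0$ for $j\ge1$, because this is computed through $\rr^G_{L_j}(\pi)=0$. Using exactness of the filtration, the only surviving piece is $j=0$. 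Hence $\pi$ is a quotient of $\omega_{G,G'_{n'_0-2t}}$, that is, $\theta_{n'_0-2t}(\pi)\ne0$, which contradicts minimality. Therefore every constituent of $\theta^0(\pi)$ is cuspidal.

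\textbf{Step 3: Proof of (b).} Consider the isotypic piece $\omega[\pi]=\pi\otimes\Theta(\pi)$ and its multiplicities. I will show $\Theta(\pi)$ is irreducible and multiplicity free via a dimension count. The idea is to compare
\[
\dim\Hom_{G\times G'}(\omega,\omega)=\sum_{\pi,\pi'}\mult_{\pi,\pi'}^2
\]
restricted to the cuspidal block of $\pi\otimes\theta^0(\pi)$. The standard input is the doubling/seesaw identity: $\Hom_{G'}(\Theta(\pi),\Theta(\pi))$ embeds into $\Hom_{G\times G}(\pi\otimes\pi^\vee,\omega_{G\times G,G'})$. The latter is computed, via Step 1 applied on the $G\times G$ side to the doubled space, as coming from the unique open $P$-orbit contribution, which is one-dimensional for cuspidal $\pi$ at first occurrence. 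If this route becomes messy, I would instead argue that two distinct cuspidal $\pi'_1,\pi'_2$ in $\theta^0(\pi)$ would both contribute to the $\pi$-isotypic part of $\rr$ of the next level up. By Step 1 at level $n'_0+2$, that part is $\pi\otimes\ii(\chi\otimes\Theta^0(\pi))$ with controlled multiplicity. I expect this irreducibility to be the second delicate point after the Jacquet module computation.
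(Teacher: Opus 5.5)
Your Steps~1--2 are the standard Kudla argument, and they do show that every constituent of $\theta^0(\pi)$ is cuspidal. There is one small slip that does not affect this. In the $j$-th subquotient, the $\GL_t$-factor is induced from the parabolic of $\GL_t$ with Levi $\GL_{t-j}\times\GL_j$, with $\GL_{t-j}$ acting by a character; your formula induces from all of $\GL_t$. The argument only uses that the $G$-side is parabolically induced from $P_j$ for $j\ge1$, so it goes through.

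The genuine gap is Step~3. The statement asserts $\theta^0(\pi)\in\Irr_\cc(G'_{n'_{\cT}(\pi)})$, i.e.\ that exactly one $\pi'$ occurs, and neither of your routes proves this. Route~B has no mechanism. Write $\Theta_{n'}(\pi):=\Hom_G(\pi,\omega_{G,G'_{n'}})$. Your own Step~1, applied one step up the tower, shows that the $\pi$-isotypic part of $\rr^{G'}_{M'_1}(\omega)$ is exactly $\pi\otimes\bigl(\chi\otimes\Theta_{n'_0}(\pi)\bigr)$, since the $j=1$ piece is induced from $P_1$ and is killed by cuspidality. This reproduces $\Theta_{n'_0}(\pi)$ with whatever constituents and multiplicities it already has, so no contradiction arises. ``Controlled multiplicity'' is precisely the claim that has to be proved.

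Route~A is the right idea (the doubling see-saw), but as written it bounds the wrong space and its key input is missing. Read literally, $\Hom_{G\times G}(\pi\otimes\pi^\vee,\omega\otimes\omega^\vee)$ has dimension $(\dim\Theta_{n'_0}(\pi))^2$, so it is not one-dimensional. The usable identity is $\End_{G'}(\Theta_{n'_0}(\pi))\cong\Hom_{G\times G}\bigl(\pi\otimes\pi^\vee,(\omega\otimes\omega^\vee)^{G'}\bigr)$. Here $\omega\otimes\omega^\vee$ is, up to a character, the Weil representation attached to $(V\oplus V^-)\otimes V'$. Step~1, which concerns Jacquet modules for parabolics of $G'$, says nothing about this space. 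What you need is the finite-field analogue of Rallis' lemma, stated as follows:
\begin{itemize}
\item Realize the Weil representation on functions on $V^\nabla\otimes V'$, where $V^\nabla$ is the antidiagonal.
\item Then $f\mapsto\bigl(g\mapsto(\omega(g)f)(0)\bigr)$ is injective on $G'$-invariants and lands in $\Ind_{P_\Delta}^{G(V\oplus V^-)}\chi$, where $P_\Delta$ is the stabilizer of the diagonal Lagrangian $V^\Delta$.
\end{itemize}
Injectivity means that the translates of $\delta_0$ span all $G'$-invariant functions. Partial Fourier transforms, the Siegel unipotent radical and M\"obius inversion over subspaces produce every function of the pair (kernel, Gram matrix), and by Witt's theorem these are exactly the $G'$-invariant functions.

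Mackey's formula over $P_\Delta\backslash G(V\oplus V^-)/(G\times G)$ then finishes the argument. The orbits are indexed by $j=\dim(L\cap V)$. For $j\ge1$ the stabilizer contains $U_j\times U_j$, so cuspidality of $\pi$ kills the contribution. The open orbit has stabilizer $G^\Delta$ and contributes $\dim\Hom_G(\pi\otimes\pi^\vee,\chi)\le1$. Hence $\sum_{\pi'}\mult_{\pi,\pi'}^2\le1$, and together with $\Theta_{n'_0}(\pi)\neq0$ this gives irreducibility and multiplicity one.

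This bound uses only cuspidality of $\pi$, not first occurrence as your ``one-dimensional for cuspidal $\pi$ at first occurrence'' suggests. First occurrence is needed only for nonvanishing and in Step~2. The paper itself gives no argument here and only cites Aubert--Michel--Rouquier.
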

\begin{proof}
See \cite[Th\'eor\`eme.7]{AMR}.
\end{proof}

A Levi subgroup $L=L_n$ of a parabolic subgroup $P=P_n$ of $G=G_n$ is isomorphic to 
\begin{equation} \label{eqn:Levi}
G_{n_0}\times \GL_{n_1}(k)\times\cdots\times\GL_{n_r}(k)\quad\text{where $n_0+n_1+\cdots+n_r=n$.}
\end{equation}
Whenever $P$ is maximal, that is, whenever $r=1$, for $\pi\in\Irr(G)$, we set
\begin{equation}
c(\pi):=\max\left\{m\,:\,\Hom_{\GL_{m_1}(k)}(\rr^G_{L}(\pi),1)\ne 0\right\}.
\end{equation}

\begin{prop} The following \textit{conservation relation}  on the first occurrence indices holds
\[n'_{\cT_+'}(\pi)+n'_{\cT_-'}(\pi)+c(\pi)=4n+2.\]
\end{prop}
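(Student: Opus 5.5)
We follow the strategy of Sun--Zhu (the conservation relation over local fields), adapted to finite fields by Pan; the statement concerns $\pi\in\Irr(G_n)$ with $G_n$ symplectic (or orthogonal), $\cT'_\pm$ the two Witt towers of the companion generalized tower, and first occurrences measured by $\dim V'$. The proof splits into two inequalities.

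\emph{Lower bound.} We first show
\[n'_{\cT'_+}(\pi)+n'_{\cT'_-}(\pi)\ge 2n+2+(2n-c(\pi)).\]
Suppose $\pi$ occurs in both $\theta_{n'_+}$ and $\theta_{n'_-}$. Then the contragredient of $\pi$, twisted by the appropriate character ($\det$ or $\xi\circ\det$, as in the definition of the companion tower), occurs in the second tower, so $\pi\otimes\pi^\vee$ occurs in the restriction to $G\times G$ of $\omega_{n'_+}\otimes\omega_{n'_-}$ (suitably twisted). That restriction is the Weil representation of the doubled pair $(G\times G,\rO(V'_+\oplus V'_-))$ restricted along the diagonal, and $V'_+\oplus V'_-$ has discriminant making it lie in the tower whose anisotropic kernel is the split one of dimension $n'_++n'_-$. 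We then use the doubling see-saw with $G^\Delta\subset G\times G$ and the Siegel parabolic of $G(V\oplus -V)$. The invariant $\Hom_G(\omega,\pi\otimes\pi^\vee)$ identifies with a degenerate principal series $\ii_{P}^{G(V\oplus-V)}(\chi)$ restricted to $G^\Delta$. Mackey theory for the $P$-orbits on $G(V\oplus -V)/G^\Delta$ then shows that the only orbit contributing the trivial pairing forces $\Hom_{\GL_{m}}(\rr^G_L\pi,1)\ne0$ with $m$ related to $2n+1-\tfrac12(n'_++n'_-)$. This yields the displayed inequality.

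\emph{Upper bound.} We show $n'_{\cT'_+}(\pi)+n'_{\cT'_-}(\pi)\le 4n+2-c(\pi)$. Let $c=c(\pi)$ and write $\pi\hookrightarrow \ii_{G_{n-c}\times\GL_c}^{G_n}(\pi_0\otimes 1)$ via Frobenius reciprocity. The trivial character $1_{\GL_c}$ is what makes $\pi$ ``small''. Using Kudla's filtration of the Jacquet module $\rr(\omega_{n,n'})$ (the finite-field version is \cite{AMR}), the theta lift of $\pi$ is controlled by that of $\pi_0$ together with a shift by $c$ in one tower. Combined with the inequality $n'_{\cT'_+}(\pi_0)+n'_{\cT'_-}(\pi_0)\le 2(n-c)+2+\dots$, this reduces to the case where $c$ is minimal. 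That case follows from a dimension count: the sum of the multiplicities of $\pi$ in the two Weil representations of $\rO_{2n+1}^\pm$, or of $\rO^\pm$ at the stable-range level, equals the multiplicity in the regular-like representation. This uses the identity $\omega_{+}+\omega_{-}=\Ind$ of a character from a Siegel-type subgroup (the finite analogue of the Rallis argument), so $\pi$ occurs in at least one tower by level $2n+1$.

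Equality follows by combining the two bounds. The main obstacle is the Mackey analysis in the lower bound. There we must identify exactly which double coset contributes and match its Levi with $\GL_{c}$ carrying the trivial character. We would first carry this out in the simplest case, $G=\Sp_{2n}$ with orthogonal towers, and then transfer to the other cases through the uniform notation of cases (1)--(5).
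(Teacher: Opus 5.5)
The paper gives no argument of its own for this proposition; it only cites [MQZ, Proposition~3.7]. Your sketch therefore has to stand by itself, and it does not.

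\textbf{The target as you state it is false.} Test it on $\Sp_2(k)$, with first occurrences measured by $\dim V'$ as you do.
\begin{itemize}
\item The trivial representation first occurs for $\rO^+_0$ and for $\rO^-_4(k)$. It does not occur for $\rO^-_2(k)$: an invariant vector in the Schr\"odinger model $\CC[V'_{\an}]$ must be fixed by the unipotent radical, hence be a multiple of $\delta_0$, and the Weyl element sends $\delta_0$ to a constant function. It does occur for $\rO^-_4(k)$: in the mixed model $\CC[V]\otimes\omega_{V\otimes V'_{\an}}$, the orbit of nonzero vectors has stabilizer $U$, and $U$ fixes $\delta_0$.
\item The Steinberg representation already occurs for both $\rO^+_2(k)$ and $\rO^-_2(k)$.
\end{itemize}
Both representations have $c(\pi)=1$ and first-occurrence sum $4$. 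So your lower bound $n'_{\cT'_+}(\pi)+n'_{\cT'_-}(\pi)\ge 4n+2-c(\pi)=5$ fails. By contrast, $n'_{\cT'_+}(\pi)+n'_{\cT'_-}(\pi)+2c(\pi)=4n+2$ holds in both cases. It also holds for the cuspidal unipotent representation of $\Sp_4(k)$, whose first occurrences are $2$ and $8$. With the paper's definition of $c(\pi)$, this factor-$2$ version is the relation to aim for. In particular, the Mackey step you describe cannot be right as stated.

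\textbf{The see-saw does not give the $c(\pi)$-dependent bound.} The see-saw you set up only shows that the trivial representation of $G$ occurs for $\rO(V'_+\oplus V'_-)$. That space lies in the \emph{non-split} tower, not the split one. Over a finite field the trivial representation of $\Sp_{2n}(k)$ already occurs there in dimension $2n+2$: the orbit of bases of a Lagrangian in $V^n$ has the Siegel unipotent radical as stabilizer, which fixes $\delta_0$. So this argument gives at best $n'_{\cT'_+}(\pi)+n'_{\cT'_-}(\pi)\ge 2n+2$, which is sharp only when $c(\pi)=n$.

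The $c(\pi)$-dependent improvement is the whole content of the statement, and you only assert it (``the only orbit contributing\ldots''). The $\Delta G$-orbit decomposition does not separate $\pi\otimes\pi^\vee$. The degenerate principal series of $G(V\oplus -V)$ belongs to the other see-saw, where $\pi$ and $\pi^\vee$ lift to the \emph{same} orthogonal group, so it plays no role in this bound.

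\textbf{The upper bound has two further gaps.}
\begin{itemize}
\item The identity $\omega_++\omega_-=\Ind_P(\chi)$ combined with doubling is a dichotomy statement. It bounds $\min(n'_{\cT'_+}(\pi),n'_{\cT'_-}(\pi))$, not the larger index, and hence not the sum. For the cuspidal unipotent representation of $\Sp_4(k)$ the minimum is $2$ but the other index is $8$.
\item The reduction through $\pi\hookrightarrow\ii(\pi_0\otimes 1_{\GL_c})$ is neither a ``shift by $c$ in one tower'' nor a reduction to $c(\pi_0)=0$. For $\pi=\mathrm{St}$ of $\Sp_4(k)$, one has $c(\pi)=1$ and $\pi_0=\mathrm{St}$ of $\Sp_2(k)$, itself with $c(\pi_0)=1$. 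Both first occurrences move, from $(2,2)$ to $(4,4)$. Moreover, the inequality you invoke for $\pi_0$ is the statement being proved.
\end{itemize}
What is missing in both halves is a mechanism that controls the two towers simultaneously in terms of $c(\pi)$.
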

\begin{proof}
See \cite[Proposition 3.7]{MQZ}.
\end{proof}

\begin{prop} 
If  $L$ is a Levi subgroup of $G$ such that $n_1=n_2=\cdots=n_r=1$, that is, 
\begin{equation} \label{eqn:LeviMQZ}
L\simeq G_{n_0}\times \GL_{1}(k)^r,\quad\text{with $n_0+r=n$},
\end{equation}
then, for every  $\sigma\in \Irr_\cc(L)$, the group $W_\sigma$ is a finite Weyl group of type $\rB_r$.
\end{prop}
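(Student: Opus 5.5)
The plan is to identify $W_\sigma$ inside the relative Weyl group $\rN_G(L)/L$, show that the latter is of type $\rB_r$, and then check that all of it stabilizes $\sigma$.

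\emph{Step 1: compute $\rN_G(L)/L$.} Realize $L\simeq G_{n_0}\times\GL_1(k)^r$ as the stabilizer of a decomposition
\[V=X\oplus V_{0}\oplus X^{*},\qquad X=\langle e_1,\dots,e_r\rangle ,\quad X^{*}=\langle f_1,\dots,f_r\rangle ,\]
with $X,X^{*}$ totally isotropic and $\langle e_i,f_j\rangle=\delta_{ij}$. Here $G_{n_0}=G(V_0)$ and the $i$-th factor $\GL_1(k)$ acts by $(a,a^{-1})$ on the line pair $(ke_i,kf_i)$. An element of $\rN_G(L)$ must permute the isotypic lines of the split torus $\GL_1(k)^r$ acting on $X\oplus X^*$ and fix $V_0$. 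Modulo $L$, it is therefore determined by a permutation of the $r$ hyperbolic planes $ke_i\oplus kf_i$, together with, for each plane, the choice of whether $e_i$ and $f_i$ are swapped. The swaps are realized by Weyl elements inside $G(ke_i\oplus kf_i)$ of the correct type (symplectic, orthogonal or unitary, following the cases (1)--(5) above). This yields
\[\rN_G(L)/L\simeq(\ZZ/2\ZZ)^r\rtimes\fS_r=W(\rB_r),\]
and its root system is spanned by $\pm\epsilon_i\pm\epsilon_j$ and $\pm\epsilon_i$ (or $\pm2\epsilon_i$). For even orthogonal $G$ with $n_0=0$, one has to check that the full $\rO$-group, not just $\SO$, still gives $(\ZZ/2\ZZ)^r\rtimes\fS_r$. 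For $n_0=0$ in the split case, an additional reflection may appear only in the connected group, and I would handle this directly.

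\emph{Step 2: action on $\sigma$.} Write $\sigma=\sigma_0\otimes\chi_1\otimes\cdots\otimes\chi_r$ with $\sigma_0\in\Irr_\cc(G_{n_0})$ and $\chi_i$ characters of $k^\times$. In the unitary case the $\GL_1$ factors are $\GL_1$ of the quadratic extension, and the swap acts by $\chi\mapsto\chi^{-\tau}$. The lifts from Step~1 centralize $G_{n_0}$, so they fix $\sigma_0$. The action on the $\chi_i$ is then the obvious one: $\fS_r$ permutes the $\chi_i$, and the $i$-th sign changes $\chi_i$ into $\chi_i^{-1}$ (resp. $\chi_i^{-\tau}$).

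\emph{Step 3: the stabilizer is everything.} This is the main obstacle. By Step~2, $W_\sigma$ is all of $W(\rB_r)$ exactly when all the $\chi_i$ coincide and are self-dual. The plan is to use the convention under which the proposition is applied, as in \cite{MQZ}. There the cuspidal pairs with Levi of the form \eqref{eqn:LeviMQZ} are those with the $\GL_1$-parts trivial, that is, $\sigma=\sigma_0\otimes 1^{\otimes r}$. This is the situation controlling $c(\pi)$ and the conservation relation. I would first verify this reduction from the definition of $c(\pi)$ via $\Hom_{\GL}(\rr^G_L(\pi),1)$. Given it, the stabilizer is the full group $W(\rB_r)$, and $R_\sigma=1$ because $W_\sigma$ is already the Weyl group of the root system $\Phi_\sigma$ of type $\rB_r$ (or $\rC_r$, which has the same Weyl group). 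The decomposition \eqref{eqn:Dec} then gives $W_\sigma=W_\sigma^\circ$ of type $\rB_r$.

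If the statement is instead meant for arbitrary $\chi_i$, Step~2 shows that the stabilizer is in general smaller: it is a product of type A and type B groups. So the proof must rest on this convention, and Step~3 is where it has to be justified.
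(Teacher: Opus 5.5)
Your proof takes a different route from the paper, and it is correct for the corrected statement; two points need repair.

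\textbf{Comparison with the paper.} The paper gives no argument here: its proof is a one-line appeal to \cite[Theorem~1.2]{MQZ}. You instead identify the relative Weyl group with $(\ZZ/2\ZZ)^r\rtimes\fS_r$ through its action on the lines $ke_i$, $kf_i$, and compute the stabilizer of $\sigma_0\otimes\chi_1\otimes\cdots\otimes\chi_r$ by hand. This is elementary and self-contained.

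It also gives an exact criterion. $W_\sigma$ is a subgroup of a finite group isomorphic to $W(\rB_r)$, so it is of type $\rB_r$ if and only if it is the whole group. That happens exactly when $\chi_1=\cdots=\chi_r=\chi$ with $\chi^{-1}=\chi$ (resp.\ $\chi^{-\tau}=\chi$ in the unitary case).

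Your diagnosis is therefore right: the proposition is false as literally stated. For $G=\Sp_2(k)=\mathrm{SL}_2(k)$ with $q\ge 5$, $L=T$ and $\sigma=\chi$ with $\chi^2\neq 1$, one gets $W_\sigma=1$. The statement has to be read for $\sigma=\sigma_0\otimes 1^{\otimes r}$ (or $\sigma_0\otimes\chi^{\otimes r}$ with $\chi$ self-dual). That is how it is used in Corollary~\ref{cor:HC} and Theorem~\ref{thm:HC-unip}, and the bare citation does not make this visible. Present it as a correction of the hypothesis; there is nothing to ``verify from the definition of $c(\pi)$''.

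\textbf{First repair: drop the claim $R_\sigma=1$.} Stop once you have shown the stabilizer is the whole relative Weyl group. Your further claim is circular, since it presupposes that $\Phi_\sigma$ has type $\rB_r$. It is also false in general. Take $G=\rO_2^+(k)$, $L=T$, $\sigma=1$. Then $\bG^\circ=\SO_2$ has no roots at all, and $\ii_T^G(1)=1\oplus\sgn$ has two constituents of the same degree. So $W_\sigma^\circ=1$ and $R_\sigma=W_\sigma\cong\ZZ/2\ZZ$. Likewise, for $\rO_{2r}^+(k)$ with $r\ge 2$ and $\sigma$ trivial on the split torus, $\Phi_\sigma$ has type $\rD_r$. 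Deciding $\Phi_\sigma$ requires knowing which generators of $\End_G(\ii_L^G\sigma)$ carry a nontrivial Hecke parameter. That is deeper input than your stabilizer computation, and the statement does not need it.

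\textbf{Second repair: the normalizer for $q=3$.} The ``isotypic lines'' argument in Step~1 fails for $q=3$, because $\GL_1(\FF_3)=\{\pm 1\}$ acts by the same character on $ke_i$ and $kf_i$. In the symplectic case the normalizer of the finite group $L$ is then genuinely too large; for example, $T=\{\pm I\}$ is central in $\mathrm{SL}_2(\FF_3)$. Work instead with $\rN_G(\bL)$, i.e.\ the normalizer of the algebraic torus $\mathbf{Z}(\bL)^\circ$, whose weights on $e_i$ and $f_i$ are distinct for every $q$. That is the correct meaning of $\rN_G(L)/L$ in any case.
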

\begin{proof}
It is a consequence of  \cite[Theorem~1.2]{MQZ}.
\end{proof}

Let $\sigma\in\Irr_\cc(L)$ and $\pi\in\Irr^\sigma(G)$. We have
\begin{equation}
\sigma\simeq\pi_{n_0}\otimes\tau_1\otimes\cdots\otimes\tau_r,
\end{equation}
where $\pi_{n'_0}\in\Irr_\cc(G_{n_0})$ and $\tau_i\in\Irr_\cc(\GL_{m_i}(k))$ for $1\le i\le r$.

A Levi subgroup $L'=L_{n'}'$ of a parabolic subgroup $P'=P_{n'}'$ of $G'=G_{n'}'$ is isomorphic to 
\begin{equation}
G'_{n'_0}\times \GL_{n'_1}(k)\times\cdots\times\GL_{n'_{r'}}(k)\quad\text{where $n'_0+n'_1+\cdots+n'_{r'}=n'$.}
\end{equation}
Let $\pi'\in\Irr_{\sigma'}(G')$ with
\begin{equation}
\sigma'\simeq\pi_{n'_0}'\otimes\tau_1'\otimes\cdots\otimes\tau_{r'}'\in\Irr_\cc(L'_{n'})
\end{equation}
such that $\pi_{n'_0}'\in\Irr_\cc(G_{n'_0}')$ and $\tau_j'\in\Irr_\cc(\GL_{n'_j}(k))$ for $1\le j\le r'$. 

\begin{thm} \label{thm:HC-HC}
Let $(G,G')=(G_n,G'_{n'})$ be a reductive dual pair, let $\sigma\in\Irr_\cc(L)$, and let $\pi\in\Irr^\sigma(G)$. 

Then every $\pi'\in\theta_{G'}(\pi)$ belongs to $\Irr^{\sigma'}(G')$, where
\begin{enumerate} 
\item if $\overline{n}\le\overline{n}'$
\[\sigma'=\theta^0(\pi_{n_0})\otimes\tau_1\otimes\cdots\otimes\tau_r\otimes 1\otimes\cdots\otimes 1;\]
\item otherwise there exists a sequence $i_1$, $\ldots$, $i_t$ such that
\[\sigma'=\theta^0(\pi_{n_0})\otimes\tau_1\otimes\cdots\otimes\widehat{\tau_{i_1}}\otimes\cdots\otimes\widehat{\tau_{i_t}}\otimes\cdots\otimes\tau_r,\]
\end{enumerate}
with $\theta^0(\pi_{n_0})$ the first  occurrence of $\pi_{n_0}$ with respect to the Witt tower associated to $G'$.
\end{thm}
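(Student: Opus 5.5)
We will follow the standard strategy of Adams--Moy and Aubert--Michel--Rouquier: compute the parabolic restriction (Jacquet module) of the Weil representation $\omega_{G,G'}$ with respect to maximal parabolics of $G$ and of $G'$, and then induct on the semisimple rank of $L$.

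The key input is the Kudla-type filtration of the restricted Weil representation. Let $P_m=L_m\ltimes U_m$ be the maximal parabolic of $G_n$ stabilizing an isotropic subspace of dimension $m$, so that $L_m\simeq G_{n-m}\times\GL_m(k)$. We would show that the $U_m$-coinvariants $\rr^{G}_{L_m}(\omega_{G_n,G'_{n'}})$, viewed as a representation of $L_m\times G'_{n'}$, have a filtration indexed by $0\le j\le \min(m,\bar n')$. The successive quotients are
\[
\ii^{\GL_m\times\GL_j\times G'_{n'-j}}_{\GL_{m-j}\times\GL_j\times \GL_j\times G'_{n'-j}}\Bigl(1_{\GL_{m-j}}\otimes \mathcal{R}_{\GL_j}\otimes \omega_{G_{n-m},G'_{n'-j}}\Bigr),
\]
suitably induced to $G'_{n'}$ through the Levi $\GL_j\times G'_{n'-j}$. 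Here $\mathcal{R}_{\GL_j}=\CC[\GL_j(k)]$ is the regular representation of $\GL_j\times\GL_j$. This comes from the mixed model of the Weil representation, by decomposing $W=(X\oplus X^*)\otimes V'\oplus V_{n-m}\otimes V'$ and computing orbits of $P_m$ on $X\otimes V'$ by rank. On each orbit the computation is a finite-field Mackey/Bruhat argument, so it can be done purely at the level of characters.

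Next, fix $\pi\in\Irr^\sigma(G)$ and $\pi'$ with $\pi\otimes\pi'\subset\omega$, and write $\sigma=\pi_{n_0}\otimes\tau_1\otimes\cdots\otimes\tau_r$. By transitivity of $\rr$, $\pi$ embeds in $\ii_{L_m}^G(\pi_1\otimes\tau_1)$ with $m=n_1$ and $\pi_1\in\Irr^{\pi_{n_0}\otimes\tau_2\otimes\cdots}(G_{n-n_1})$. Frobenius reciprocity then gives a nonzero $L_m\times G'$-map from $\rr^G_{L_m}(\omega)$ to $\pi_1\otimes\tau_1\otimes\pi'$. It must be nonzero on some graded piece $j$. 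We then split into two cases, according to whether $\tau_1$ is the trivial character of $\GL_1$ or not.

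If $j=m$, which is always possible when $\tau_1$ is nontrivial cuspidal, the regular-representation factor forces $\pi'$ to embed in $\ii^{G'}_{\GL_m\times G'_{n'-m}}(\tau_1^{(\vee)}\otimes\pi'_1)$ with $\pi'_1\otimes\pi_1\subset\omega_{G_{n-m},G'_{n'-m}}$. The contragredient/twist is absorbed by the normalization $\omega_{G,G'}$ of \eqref{eqn:omegaGGdef}. If instead $j<m$, the factor $1_{\GL_{m-j}}$ forces $\tau_1$ to be trivial with $m=1$, and the torus is removed from $\sigma$; this produces the hatted $\tau_{i}$ of case (2). By induction on $r$ we reduce to the case $r=0$, where $\pi=\pi_{n_0}$ is cuspidal. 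Then the earlier theorem (\cite[Th\'eor\`eme 7]{AMR}) identifies $\theta^0(\pi_{n_0})$ as cuspidal, and the argument above applied to $\pi'$, with the roles of the two groups swapped, shows that the cuspidal part of $\Sc(\pi')$ is exactly the first occurrence. The extra trivial characters $1$ appear by applying the same filtration on the $G'$ side whenever $\bar n'$ exceeds what is needed, so $n'$ drops by steps of one via $1_{\GL_1}$. Comparing $\bar n$ and $\bar n'$ then decides which of cases (1) and (2) occurs.

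The main obstacle will be the precise bookkeeping of the filtration. That means identifying exactly which $\GL_1$'s get absorbed (the sequence $i_1,\ldots,i_t$), and handling signs and characters (the quadratic character $\xi$ and the determinant twists) in the orthogonal and unitary cases. It also includes the non-connected group $\rO^\epsilon_{2n}$, where the Harish-Chandra theory of \ref{thm:HC:W} must be applied with care. I would first settle the computation of the graded pieces at the level of characters, by counting orbits, before attempting the induction.
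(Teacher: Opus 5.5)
Your inductive step is sound and follows the standard route: the mixed-model filtration of the Jacquet module of $\omega_{G,G'}$, Frobenius reciprocity, and the dichotomy between $j=m$ and $j=0$ (the latter only when $\tau_1$ is the trivial character of $\GL_1$). This is the kind of argument behind the cited result in AMR. One small correction: the contragredient $\tau_1^\vee$ is removed by $\rN_{G'}(L')$-conjugacy, not by the normalization of $\omega_{G,G'}$; that normalization only handles the $\xi\circ\det$ twists.

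\textbf{The gap is in the base case $r=0$.} Swap roles and take $\pi=\pi_{n_0}$ cuspidal. Every piece with $j\ge 1$ of $\rr^{G'}_{\GL_{m'}\times G'_{n'-m'}}(\omega)$ is parabolically induced on the $G$-side, so only $j=0$ survives. What you actually obtain is $\Sc(\pi')=\pi'_0\otimes 1\otimes\cdots\otimes 1$, where $\pi'_0$ is \emph{some} cuspidal constituent of $\theta_{m'}(\pi)$ at \emph{some} level $m'\le n'$ of the tower.

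To conclude $\pi'_0=\theta^0(\pi)$ you need $m'=n'_{\cT'}(\pi)$. That is, a cuspidal $\pi$ must have no cuspidal partners above its first occurrence. The theorem you quote only says that the first occurrence is irreducible cuspidal, which is a different statement.

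No Jacquet-module computation can supply this, because cuspidal constituents are invisible to every $\rr^{G'}_{L'}$. Write $\Theta_{m'}(\pi):=\Hom_G(\pi,\omega_{n_0,m'})$ for the full isotypic lift. The identity $\rr^{G'}_{\GL_1\times G'_{m'-1}}\Theta_{m'}(\pi)\simeq\chi\otimes\Theta_{m'-1}(\pi)$ is perfectly compatible with $\Theta_{m'}(\pi)$ containing an extra cuspidal summand.

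The missing input is a global one. You can cite the Adams--Moy result that the two members of a cuspidal pair in $\Theta$ are mutual first occurrences. More directly, you can show that $\Theta_{m'}(\pi)$ is irreducible or zero for cuspidal $\pi$, using the doubling see-saw:
\begin{itemize}
\item The sum $\sum_{\pi'}\mult_{\pi,\pi'}^2$ equals the multiplicity of a twist of $\pi\otimes\pi^\vee$ in the $G'$-coinvariants of the Weil representation of $(G(V\oplus V^-),G')$.
\item These coinvariants embed into the Siegel degenerate principal series $\Ind_{P^\square}^{G(V\oplus V^-)}\chi$.
\item Decompose this by Mackey theory over the $G\times G$-orbits of Lagrangians $L$. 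Any orbit with $X=L\cap V\neq 0$ has stabilizer containing $U_X\times 1$, so it contributes nothing to a cuspidal $\pi\otimes\pi^\vee$.
\item The open orbit, whose stabilizer is a diagonal copy of $G$, contributes at most $1$.
\end{itemize}
Once $\Theta_{m'}(\pi)$ is irreducible, its Jacquet module is nonzero for $m'>n'_{\cT'}(\pi)$. This forces it to be non-cuspidal with cuspidal support $\theta^0(\pi)\otimes 1\otimes\cdots\otimes 1$, which is exactly the base case your induction needs.
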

\begin{proof}
See \cite[Th\'eor\`eme~3.7]{AMR}.
\end{proof}

As an immediate consequence of Theorem~\ref{thm:HC-HC}, we have:
\begin{cor} \label{cor:HC}
If $L$ is of the form \eqref{eqn:LeviMQZ}, and $\sigma=\pi_{n_0}\otimes \chi^r$, then we have
\[L'\simeq G'_{n'_0}\times \GL_{1}(k)^{r'}\quad\text{and $\sigma'=\theta^0(\pi_{n_0})\otimes\chi^{r'}$},\]
where $n'_0=n_{\cT'}(\sigma_0)$ and $n'_0+r'=n'$.
\end{cor}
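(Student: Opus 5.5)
The plan is to specialize Theorem~\ref{thm:HC-HC} to the case where every $\GL$-block of $L$ has size one. Take $L\simeq G_{n_0}\times\GL_1(k)^r$ as in \eqref{eqn:LeviMQZ}, and write $\sigma=\pi_{n_0}\otimes\chi^{r}$, meaning $\tau_1=\cdots=\tau_r=\chi$ for a character $\chi$ of $\GL_1(k)=k^\times$. Every character of $k^\times$ is automatically cuspidal, since $\GL_1$ has no proper parabolic subgroup. So $\sigma$ is a cuspidal irreducible representation of $L$, and Theorem~\ref{thm:HC-HC} applies to any $\pi\in\Irr^\sigma(G)$ and any $\pi'\in\theta_{G'}(\pi)$.

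The argument then splits according to the two cases of Theorem~\ref{thm:HC-HC}.

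\begin{itemize}
\item In case (1), the theorem gives
\[\sigma'=\theta^0(\pi_{n_0})\otimes\chi\otimes\cdots\otimes\chi\otimes1\otimes\cdots\otimes1.\]
Here every factor other than $\theta^0(\pi_{n_0})$ is a character of some $\GL_1(k)$.
\item In case (2), certain factors $\tau_{i_1},\ldots,\tau_{i_t}$ are deleted. The remaining ones are still copies of $\chi$ on $\GL_1(k)$-factors.
\end{itemize}

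In either case the cuspidal support of $\pi'$ has Levi
\[L'\simeq G'_{n'_0}\times\GL_1(k)^{r'},\qquad n'_0=n'_{\cT'}(\pi_{n_0}),\]
because $\theta^0(\pi_{n_0})$ lives on $G'_{n'_0}$ by Definition~\ref{defn:first-occurr}. The rank count $n'_0+r'=n'$ holds because $L'$ is a Levi subgroup of $G'_{n'}$, as in the description of Levi subgroups preceding Theorem~\ref{thm:HC-HC}.

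The step I expect to be the main obstacle is justifying the notation $\chi^{r'}$ in case (1), where trivial characters $1$ are appended. Literally, $\sigma'$ there has the form $\chi^{r}\otimes 1^{r'-r}$, not $\chi^{r'}$. I would handle this in one of two ways, deciding by what the intended statement requires.
\begin{itemize}
\item Read the corollary as asserting only that the $\GL_1$-part of $\sigma'$ is a product of characters of $\GL_1(k)$. This is what is needed later: the group $W_{\sigma'}$ is then again of type $\rB_{r'}$-like form, by the proposition preceding Theorem~\ref{thm:HC-HC}.
\item Note that, up to $\rN_{G'}(L')$-conjugacy, the characters may be inverted on individual factors. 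For the unipotent case of main interest one has $\chi=1$, and then $\sigma'=\theta^0(\pi_{n_0})\otimes 1^{r'}$ holds on the nose.
\end{itemize}
I would state the result in the second form, with the general case reading $\chi^{r'}$ as a multiset of $\GL_1$-characters consisting of $\chi$ and trivial characters. In every reading, the conclusion about $L'$ and $n'_0$ follows directly from Theorem~\ref{thm:HC-HC}.
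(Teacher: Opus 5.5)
Your proposal is correct and follows the paper, which gives no separate argument and presents the corollary as the specialization of Theorem~\ref{thm:HC-HC} to Levi subgroups all of whose $\GL$-blocks have size one. Your caveat about case (1) is also accurate: the appended factors there are trivial characters, so $\sigma'=\theta^0(\pi_{n_0})\otimes\chi^{r'}$ holds literally only for $\chi=1$, which is the unipotent case used in Theorem~\ref{thm:HC-unip}; in general one gets $\theta^0(\pi_{n_0})\otimes\chi^{r}\otimes 1^{r'-r}$.
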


\section{Theta correspondence and Lusztig series} \label{sec:Lusztig}
\subsection{Deligne-Lusztig virtual characters}\

Given $\bX$ a $\overline k$-variety and $\ell$ a prime number not dividing $q$, Grothendieck constructed \textit{$\ell$-adic cohomology groups with compact support} $H_c^i(\bX,\Qlbar)$ which are finite dimensional $\Qlbar$-vector spaces.
We have $H_c^i(\bX,\Qlbar)=$ if $i\notin\{0,1,\cdots,2\dim\bX\}$.

Let $\bU$ be the unipotent radical of a parabolic subgroup $\bP$ of $\bG$. We consider the variety 
\begin{equation}
\bX_\bU:=\left\{g\bU \in G/\bU \;:\;g\bU \cap F (g\bU)\ne\emptyset\right\}.
\end{equation} 
It has an obvious left $G$-action, and a right $L$-action since $\bL$ normalizes $\bU$.

\begin{defn}\
 {\rm 
\begin{enumerate}
\item The \textit{Lustig induction} $R_\bL^\bG(M)$ of a $\Qlbar[L]$-module $M$ is defined by
\[R_\bL^\bG(M):=H_c^i(\bX_\bU,\Qlbar)\otimes_{\Qlbar[L]} M.\]
\item The \textit{Lustig restriction} ${}^\ast R_\bL^\bG(N)$ of a $\Qlbar[G]$-module $N$ is defined by
\[{}^\ast R_\bL^\bG(N):=N \otimes_{\Qlbar[G]} H_c^i(\bX_\bU,\Qlbar).\]
\end{enumerate}}
\end{defn}

When $\bL$ is a torus $\bT$ and $\theta\in\Irr(T)$ the virtual characters $R_\bL^\bG(\theta)$  where introduced in \cite{DL} and are called \textit{Deligne-Lusztig characters}. 

\begin{defn} \label{defn:uniform}
 {\rm A  complex representation of $G$ is  \textit{uniform} if its character is a linear combination of the Deligne-Lusztig characters.}
\end{defn}

Let $W$ be the Weyl group of a quasi-split torus of $\bG$. We consider the coset $WF$ of $W$ in $W\rtimes\langle F\rangle$, where $\langle F\rangle$ is the group generated by the automorphism of finite order induced by $F$ on $W$.
Let $f\colon WF\to\CC$ be a function which is invariant by $W$-conjugacy. We define
\begin{equation} \label{eqn:Rf}
R_f:= :=\frac{1}{|W|}\sum_{w\in W} f(wF) R^\bG_{\bT_w}(1).
\end{equation}

For unitary groups and general linear groups, every representation is uniform (see \cite[Theorem~11.7.3]{DMbook}). 

From now on for $\pi\in\Irr(G)$, we will also write $\pi$ for its character.

\begin{thm} {\rm \cite[Corollary~7.7]{DL}}
Let $\pi\in\Irr(G)$. Then, there exists a $k$-rational maximal torus $\bT$ of $\bG$ and a character $\theta$ of $T$ such that $(\pi, R_\bT^\bG(\theta))_{G}\ne 0$.
\end{thm}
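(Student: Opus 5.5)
The plan is to reduce the statement to an orthogonality relation together with the fact that the regular character is a combination of Deligne--Lusztig characters. Consider the regular character $\mathrm{reg}_G$ of $G$. I will express a multiple of it as a rational linear combination of the virtual characters $R_\bT^\bG(\theta)$, where $\bT$ runs over the $k$-rational maximal tori and $\theta\in\Irr(T)$. Once this is done the statement follows directly. Indeed, $(\pi,\mathrm{reg}_G)_G=\pi(1)\neq 0$. If $(\pi,R_\bT^\bG(\theta))_G$ were zero for every pair $(\bT,\theta)$, then by linearity $(\pi,\mathrm{reg}_G)_G$ would also vanish, which is a contradiction.

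The key identity to establish is
\[
\mathrm{reg}_G=\frac{1}{|W|}\sum_{w\in W}\ \sum_{\theta\in\Irr(T_w)}\dim\bigl(R_{\bT_w}^\bG(\theta)\bigr)\,R_{\bT_w}^\bG(\theta),
\]
or at least some version of it in which the coefficient of the identity component is a nonzero multiple of $\mathrm{reg}_G$. I would derive it in three steps.

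\textbf{Step 1.} Summing $R_\bT^\bG(\theta)$ over $\theta\in\Irr(T)$ gives $|T|$ times the character of the $H^\ast_c$ of the quotient $\bX/T$ (Deligne--Lusztig variety quotient). Equivalently, this sum is $\Ind$-like and supported on the unipotent elements. Concretely, $\sum_\theta R_\bT^\bG(\theta)(g)$ vanishes unless $g$ is unipotent, and there it equals $|T|\,Q_\bT^\bG(g)$, where $Q_\bT^\bG$ is the Green function. This follows from the character formula for $R_\bT^\bG(\theta)$, which I will take from \cite{DL}.

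\textbf{Step 2.} Average over tori. Use the identity $\frac{1}{|W|}\sum_{w}\varepsilon_{\bG}\varepsilon_{\bT_w}|T_w|\,\ldots$, that is, Deligne--Lusztig's formula showing that $\frac{1}{|W|}\sum_w \varepsilon_\bG\varepsilon_{\bT_w} R_{\bT_w}^\bG(\mathrm{reg}_{T_w})$ is $\mathrm{reg}_G$ up to the factor given by the $p'$-part of $|G|$. Equivalently, the Steinberg-type identity gives that the average of Green functions with signs is the characteristic function of $\{1\}$ times $|G|_{p'}$.

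\textbf{Step 3.} Evaluate at $g=1$. The resulting class function vanishes off $1$ and is nonzero at $1$, so it is a nonzero multiple of $\mathrm{reg}_G$.

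The main obstacle is Step 2, namely the orthogonality of Green functions, $\frac{1}{|W|}\sum_w \varepsilon_{\bT_w}Q_{\bT_w}^\bG(u)\,|T_w|$ with the appropriate dimension weights. I will take it from \cite{DL}, via the formula $(R_\bT^\bG(\theta),R_{\bT'}^\bG(\theta'))_G=|\{w\in W(\bT,\bT')^F: {}^w\theta'=\theta\}|$ and the known value $Q_\bT^\bG(1)=\varepsilon_\bG\varepsilon_\bT|G|_{p'}/|T|$. With these inputs the computation is routine, and the argument above concludes the proof.
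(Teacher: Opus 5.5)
Your overall strategy (write $\mathrm{reg}_G$ as a combination of the $R_{\bT}^{\bG}(\theta)$ and pair it with $\pi$) is the argument behind \cite[Corollary~7.7]{DL}, and your displayed identity is correct. Step~1 is also correct in its concrete form, although $\sum_\theta R_\bT^\bG(\theta)$ is the Lefschetz character of $H^\ast_c$ of the Deligne--Lusztig variety itself, not $|T|$ times that of its quotient by $T$. The gap is Step~2, which carries all the content and is misstated.

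The unweighted average $\frac{1}{|W|}\sum_w\varepsilon_\bG\varepsilon_{\bT_w}R_{\bT_w}^\bG(\mathrm{reg}_{T_w})$ is \emph{not} proportional to $\mathrm{reg}_G$. For $G=\GL_2(k)$ its value at a regular unipotent element is $\frac12\bigl((q-1)^2-(q^2-1)\bigr)=-(q-1)\neq 0$. Your display works only because of the weights $\dim R_{\bT_w}^\bG(\theta)=\varepsilon_\bG\varepsilon_{\bT_w}|G|_{p'}/|T_w|$. These cancel the factor $|T_w|$ from Step~1 and reduce the value at a unipotent $u$ to $|G|_{p'}\cdot\frac{1}{|W|}\sum_w\varepsilon_\bG\varepsilon_{\bT_w}Q_{\bT_w}^\bG(u)$. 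So what you must show is that this signed average of Green functions equals $|G|_p$ (not $|G|_{p'}$) at $u=1$ and $0$ at every unipotent $u\neq 1$.

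That fact does not follow from the inputs you name. Let $\Phi$ be your right-hand side. The scalar product formula and the degree formula give $(\Phi,R_\bT^\bG(\theta))_G=\dim R_\bT^\bG(\theta)=(\mathrm{reg}_G,R_\bT^\bG(\theta))_G$ for all $(\bT,\theta)$. So $\Phi$ is the orthogonal projection of $\mathrm{reg}_G$ onto the span of the Deligne--Lusztig characters, and $(\Phi-\mathrm{reg}_G,\Phi-\mathrm{reg}_G)_G=|G|-\Phi(1)$. Whether this vanishes is exactly the question of whether $\mathrm{reg}_G$ is uniform, and those two formulas do not contain that information. You need one more genuine input, either of the following:
(a) the identification of $\frac{1}{|W|}\sum_w\varepsilon_\bG\varepsilon_{\bT_w}R_{\bT_w}^\bG(1)$ with the Steinberg character (through its description as an alternating sum of parabolically induced trivial characters), together with $\mathrm{St}_G(1)=|G|_p$ and the vanishing of $\mathrm{St}_G$ on non-semisimple elements;
(b) Steinberg's theorem that $\bG$ has exactly $|G|_p^2$ maximal tori defined over $k$, i.e.\ $\frac{1}{|W|}\sum_w|G|/|T_w|=|G|_p^2$. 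This gives $\Phi(1)=|G|_{p'}^2\cdot\frac{1}{|W|}\sum_w|T_w|^{-1}=|G|$, hence $\Phi=\mathrm{reg}_G$.
With either of these supplied, your argument closes as you intended.
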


In \cite{DL} the following two regularity conditions for a character $\theta$ of $T$ are defined:
\begin{enumerate}
\item $\theta$ is said to be \textit{in general position} if its stabilizer in $(\Nor_\bG(\bT)/\bT)(k)$ is trivial. 
\item $\theta$ is said to be \textit{non-singular} if it is not orthogonal to any coroot.
\end{enumerate} 
If the centre of $\bG$ is connected, then $\theta$ is non-singular if and only if it is in general position (see Proposition~5.16 in \cite{DL}).

We will need to extend the above definitions to the case where the group $\bG$ is possibly disconnected. We denote by $\bG^\circ$ the identity component of $\bG$. For $\bT$ a $k$-rational maximal torus of $\bG^\circ$ and $\theta$ a character of $T=\bT(k)$, we set
\begin{equation}
R_{\bT}^{\bG}(\theta):=\Ind_{\bG^{\circ}(k)}^{\bG(k)}(R_{\bT}^{\bG^{\circ}}(\theta)).
\end{equation}

\begin{defn} \label{defn:unipotent}
{\rm The representation $\pi$ of $G$ is \textit{unipotent} if $(\pi, R_\bT^\bG(1))_{G}\ne 0$, for some $k$-rational maximal torus $\bT$ of $\bG$.
Let $\Irr^\unip(G)$ denote the set of isomorphism classes of unipotent irreducible representations of $G$. }
\end{defn}

We write
\begin{equation} \label{eqn:unip-cusp}
\Irr_{\cusp,\unip}(G):=\Irr_\cusp(G)\cap\Irr^\unip(G).
\end{equation}

\subsection{Lusztig series}\

We denote by $\check \bG^{\circ}$ the reductive connected group with root datum dual to that of $\bG^\circ$.
The $G^\circ$-conjugacy classes of pairs $(\bT,\theta)$ as above are in one-to-one correspondence with the $\check G^{\circ}$-conjugacy classes of pairs $(\check \bT,s)$, where  $\check \bT$ is a $k$-rational maximal torus of $\check \bG^{\circ}$ and $s$ is a semisimple element of $\check G^{\circ}$ belonging to  $\check \bT$. We then write $(\bT,\theta)\leftrightarrow (\check \bT,s)$.

\begin{defn}{ \label{defn-Deligne-Lusztig-series} The \textit{Lusztig series} are the sets
\[ \cE(G,s):=\left\{\pi\in\Irr(G)\colon
  \text{$\pi$ occurs in $R_\bT^\bG(\theta)$, where $(\bT,\theta)_{G^\circ}$ corresponds to $(\check \bT,s)_{\check G^{\circ}}$}\right\},
\]
where $(s)$ runs over  the $\check G^{\circ}$-conjugacy classes of semisimple elements of $\check G^{\circ}$.}
\end{defn}
Lusztig series provide a partition of the set $\Irr(G)$:
\begin{equation} \label{eqn:Lusztig-series}
\Irr(G)=\bigsqcup_{(s)}\cE(G,s),
\end{equation}
where $(s)$ runs over  the $\check G^{\circ}$-conjugacy classes of semisimple elements of $\check G^{\circ}$.  The subset $\cE(G,1)$ is the set of unipotent representations in $\Irr(G)$. 

Let $\bG_s$ denote the dual group of the centralizer of $s$ in $\check \bG$. We set $G_s:=\bG_s(k)$.
By  \cite[Theorem~4.23]{Lubook} when  $\bG$ is connected group with connected center, by \cite[\S12]{Lus-disconnected-center} when $\bG$ is an arbitrary connected group, and by \cite[Proposition~1.7]{AMR} when $\bG=\rO^\pm_{2n}$, there is a bijection
\begin{equation}\label{Lus-unipotent-decomposition}
   \cL_s^G\colon \mathcal{E}(G,s)\xrightarrow{1-1} \mathcal{E}(G_s,1)=\Irr^\unip(G_s), \quad
    \pi\mapsto\pi^\unip .
\end{equation}
The bijection \eqref{Lus-unipotent-decomposition} satisfies the following properties:
\begin{enumerate}
    \item[{\rm (1)}] It sends a Deligne-Lusztig character $R_\bT^\bG(\theta)$ in $G$ (up to a sign) to a Deligne-Lusztig character $R_{\bT}^{\bG_s}(1)$, where $1$ denotes the trivial character of $T$ (see \S12 of \cite{Lus-disconnected-center}):
 \begin{equation} \label{eqn:ee}
 \cL_s^G(\varepsilon_G R_\bT^\bG(\theta))=\varepsilon_{G_s}R_{\bT}^{\bG_s}(1),
 \end{equation}
for  $(\bT,\theta)\leftrightarrow (\check \bT,s)$, where $\varepsilon_G:=(-1)^{\rk(\bG)}$, with $\rk(\bG)$ denoting the $k$-rank of $\bG$ (see \cite[Definition 7.1.5]{DMbook}).
  \item[{\rm (2)}] It preserves cuspidality in the following sense: 
if $\pi\in\cE(G,s)$ is cuspidal, then
\begin{itemize}
    \item[{\rm (a)}]
the largest $k$-split torus in the center of $\bG_s$ coincides with the largest $k$-split torus in the center of $\bG$ (see \cite[(8.4.5)]{Lubook}),
    \item[{\rm (b)}] the unipotent representation $\pi^\unip$ is cuspidal.
 \end{itemize}
    \item[{\rm (3)}]
The dimension of $\pi$ is given by
\begin{equation} \label{eqn:dim tau}
\dim(\pi)=\frac{|G|_{p'}}{|\bG_s(k)|_{p'}}\,\dim(\pi^\unip), 
\end{equation}
where $|G|_{p'}$ is the largest prime-to-$p$ factor of the order of $G$ (see \cite[Remark~13.24]{DMbook}).
\end{enumerate}

By collecting the bijections $\cL_s^G$ for all the semisimple element $s$, we obtain a bjijection 
\begin{equation} \label{eqn:Jordan}
\cL^G\colon\Irr(G)\longrightarrow \bigcup_{s}\Irr^\unip(G_s).
\end{equation}

\begin{remark}\

 {\rm 
\begin{enumerate}
\item When the center of $\bG$ is connected, the bijection $\cL^G$ is uniquely determined by the properties above (see \cite{DM90}). It applies in particular when $\bG=\SO_{2n+1}$.
\item The restriction of $\cL^G$ to uniform representations of $G$ is uniquely determined by \eqref{eqn:ee}. Hence, $\cL^G$ is uniquely determined  by the properties above, whenever $\bG$ is a general linear group or a unitary group, since every irreducible representation of $\bG$ is uniform.
\end{enumerate}
}
\end{remark}

Moreover, for the other classical groups $G$ the following holds.

\begin{thm}
Let $\bG\in\{\Sp_{2n},\rO_{2n}^\epsilon,\rO_{2n+1}^\epsilon\}$.
Suppose that $q$ is large enough so that the main result in \cite{Sri} holds. 
Then, for each group $G$, there is a unique choice of $\cL^G$. This choice is compatible with parabolic induction and the theta correspondence (defined with respect to $\omega_{G,G'})$.
\end{thm}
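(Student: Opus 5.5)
The plan is to argue by induction on $n$, using Srinivasan's result \cite{Sri} to control the uniform part of each $\pi\in\cE(G,s)$. By \eqref{eqn:ee}, $\cL^G_s$ is already determined on uniform projections, so the only freedom left is on representations that share the same uniform projection. For symplectic and orthogonal groups these come in small packets; their members differ by twists by $\sgn$ or $\det$-type characters, or are swapped by the outer action. The proof therefore splits into two parts: existence of a bijection compatible with $\ii_L^G$ and with $\theta$, and uniqueness.

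For existence, I would start from the Lusztig bijections for $\bG_s$ and the bijection of \cite[Proposition~1.7]{AMR} for $\rO^\pm_{2n}$. I then adjust them in three steps.

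First, for cuspidal $\pi\in\cE(G,s)$, property (2) says $\pi^\unip$ is cuspidal. Theorem~\cite[Th\'eor\`eme.7]{AMR} says first occurrences of cuspidals are cuspidal. I would fix the labelling of cuspidals in each Lusztig series by requiring
\[
\cL^{G'}(\theta^0(\pi))=\theta^0(\pi^\unip),
\]
where the right side is the first occurrence for the corresponding endoscopic dual pair. Srinivasan's theorem is used to check that this is well defined: it gives the $\theta$-lift of Deligne--Lusztig characters, $\omega_{G,G'}=\sum \pm R_{\bT}^{\bG}(\theta)\otimes R_{\bT'}^{\bG'}(\theta')$. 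This shows the uniform projections match under the Lusztig bijections on both sides.

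Second, I extend to full Harish-Chandra series. By Theorem~\ref{thm:HC:W}, $\Irr^\sigma(G)$ is in bijection with $\Irr(W_\sigma)$. Compatibility with $\ii_L^G$ forces $\cL^G$ on $\Irr^\sigma(G)$ to be the composite of this bijection with the analogous one for $\cL^L(\sigma)$ in $G_s$. Here I need the Hecke algebras $\End(\ii_L^G\sigma)$ and $\End(\ii_{L_s}^{G_s}\sigma^\unip)$ to have isomorphic Weyl groups with the same parameters, which is Lusztig's result.

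Third, I verify compatibility with $\theta$ on non-cuspidals using Theorem~\ref{thm:HC-HC} and Corollary~\ref{cor:HC}. The theta correspondence sends $\Irr^\sigma(G)$ to $\Irr^{\sigma'}(G')$, and under the Weyl group parametrisation it is a correspondence between $\Irr(W_\sigma)$ and $\Irr(W_{\sigma'})$. This correspondence depends only on the types and ranks, so it agrees with the one for the unipotent pair.

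For uniqueness, suppose $\cL^G$ and $\tilde\cL^G$ both satisfy (1)--(3) and the compatibilities. Their difference $\tilde\cL^G\circ(\cL^G)^{-1}$ preserves uniform projections and commutes with $\ii$ and $\theta$. By the Harish-Chandra step, it is determined by its action on cuspidals. For a cuspidal $\pi$, its first occurrence in the two Witt towers $\cT'_\pm$ distinguishes the members of its uniform packet. This uses the conservation relation $n'_{\cT'_+}+n'_{\cT'_-}+c(\pi)=4n+2$ and the fact that $\pi\otimes\sgn$ goes up the other tower. Hence the difference is the identity.

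I expect the main obstacle to be the well-definedness in the first step, namely matching the signs and the $\pm$ ambiguity between $\pi$ and $\pi\otimes\sgn$ (or the $\rO^\epsilon_{2n}$ outer twist) on both sides. To resolve it I would compute $\omega_{G,G'}$ against $\sgn$-twisted Deligne--Lusztig characters via Srinivasan's formula and compare with the twist $\xi\circ\det$ built into \eqref{eqn:omegaGGdef}. This is also where the hypothesis that $q$ is large is used.
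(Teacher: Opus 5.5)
\textbf{Gap: how the residual ambiguity is broken.} Breaking this ambiguity is the whole content of the theorem, and your argument for it does not work.

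Your uniqueness step claims that first-occurrence indices in ``the two Witt towers'', together with the conservation relation and ``$\pi\otimes\sgn$ goes up the other tower'', separate the members of a uniform packet. This is false.

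\begin{enumerate}
\item For $G=\Sp_{2n}$ there is no $\sgn$ twist. The packet members are the $\pi\in\cE(G,s)$ whose labels differ by $\sgn$ on the $\rO^\pm_{2\nu_{-1}(s)}$-factor of $G_s$. Theorem~\ref{thm:reducSpOeven} sees $\pi^\unip_{-1}$ only up to $\sgn$, so all members of a packet have the same first occurrence in both towers $\rO^+$ and $\rO^-$. For instance, the two cuspidal representations of $\Sp_2(k)=\mathrm{SL}_2(k)$ of degree $(q-1)/2$ both first occur at $\rO_2^-(k)$, and by conservation both first occur at $\rO_4^+(k)$.
\item For orthogonal groups it fails as well. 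The two extensions $\chi_0^\pm$ to $\rO_2^-(k)$ of the quadratic character of $\SO_2^-(k)$ satisfy $\chi_0^-=\chi_0^+\otimes\sgn$, have the same uniform projection, and both first occur at $\Sp_2$.
\item For $\rO^\epsilon_{2n}$ the ambiguity is the (up to) four-element set $\cE(\pi)$ of Notation~\ref{notation:Epi}, of which $\otimes\sgn$ realizes only one swap.
\end{enumerate}

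Your remedy for the ``main obstacle'' cannot work either. Srinivasan's formula expresses $\omega_{G,G'}$ through Deligne--Lusztig characters only, and $R^{\bG}_{\bT}(\theta)=\Ind_{\bG^\circ(k)}^{\bG(k)}R^{\bG^\circ}_{\bT}(\theta)$ is unchanged by $\otimes\sgn$. Computing against ``$\sgn$-twisted'' Deligne--Lusztig characters is therefore blind to exactly what you must resolve. This is why Theorems~\ref{thm:reducSpOeven} and~\ref{thm:reducSpOodd} still contain an ``or''.

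Likewise, your Step~1 normalisation $\cL^{G'}(\theta^0(\pi))=\theta^0(\pi^\unip)$ only transports the ambiguity of the $-1$-parts from $G$ to $G'$. You also never check that the labels it imposes on a cuspidal of $\Sp_{2n}$ coming from even and from odd orthogonal towers agree.

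\textbf{The missing idea: a canonical anchor.} You need a group where the Jordan decomposition is already canonical. This is $\SO_{2n'+1}$: it has connected centre, so $\cL$ is unique there (Remark following \eqref{eqn:Jordan}), hence also on $\rO_{2n'+1}=\SO_{2n'+1}\times\{\pm1\}$.

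For the pair $(\Sp_{2n},\rO_{2n'+1})$ the roles of the eigenvalues $1$ and $-1$ are interchanged (Theorem~\ref{thm:reducSpOodd}). So the ambiguous factor $\pi^\unip_{-1}\in\Irr^\unip(\rO^\pm_{2\nu_{-1}(s)})$ passes through a genuine unipotent theta correspondence with a factor of symplectic type. Theorem~\ref{thm:main} shows that $\pi_\Lambda$ and $\pi_{\Lambda^t}=\pi_\Lambda\otimes\sgn$ have different theta lifts. In the $\mathrm{SL}_2$ example it is the odd towers $\rO_1^\pm$ that separate the two cuspidals: they are the odd parts of $\omega_\psi$ and $\omega_{\psi_a}$, with $a$ a non-square.

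Requiring the ``or'' in Theorem~\ref{thm:reducSpOodd} to be resolved consistently therefore pins down $\cL^{\Sp_{2n}}$. Then the pair $(\Sp_{2n'},\rO^\epsilon_{2n})$, in which the $-1$-parts are matched and the $1$-parts are theta-lifted, pins down $\cL^{\rO^\epsilon_{2n}}$. Existence amounts to showing that a single choice resolves all these ``or''s simultaneously, which your Step~1 does not address.

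The paper itself gives no argument here and simply cites \cite[Theorem~6.1]{WangJ}.
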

\begin{proof}
See \cite[Theorem~6.1]{WangJ}.
\end{proof}

\subsection{Decompositions of the endoscopic groups} \label{subsec:centralizers}\

Let $\bG$ be  a classical group of rank $m$ over $\overline{k}$, and $\bT_m\simeq\overline{k}^\times\times\cdots\times\overline{k}^\times$ be a $k$-rational maximal torus of $\bG$. For $s = (\lambda_1,\ldots,\lambda_m)\in \check \bT_m$,  we denote by let $\nu_\lambda(s)$ denote the number of the $\lambda_i$'s which are equal to $\lambda$, and  by $\langle\lambda\rangle$ the set of all roots in $\overline{k}$ of the irreducible polynomial of $\lambda$ over $k$.  

The endoscopic group $\bG_s$  decomposes as a product
 \begin{equation} \label{eqn:centr}
 \bG_s=\prod_{\langle\lambda\rangle\subset\{\lambda_1,\ldots,\lambda_m\}}\bG_{s,\lambda},
 \end{equation}
where $\bG_{s,\lambda}$ is a reductive quasi-simple group of rank equal to $\nu_\lambda(s)\cdot |\langle\lambda\rangle|$, see for instance \cite[\S1.B]{AMR}. 

\smallskip

Let $\bG\in\{\SO_{2n+1}, \Sp_{2n}, \rO^\pm_{2n}, \rU_m\}$. Then the following holds:
\begin{itemize}
\item[(i)] if $\bG=\SO_{2n+1}$, then $\check \bG=\Sp_{2n}$, $\bG_{s,1}\simeq\SO_{2\nu_1(s)+1}$, and $\bG_{s,-1}\simeq\SO_{2\nu_{-1}(s)+1}$;
\item[(ii)] if $\bG=\Sp_{2n}$, then $\check \bG=\SO_{2n+1}$, $\bG_{s,1}\simeq \Sp_{2\nu_1(s)}$ , and $\bG_{s,-1}\simeq\rO^\pm_{2\nu_{-1}(s)}$;
\item[(iii)] if $\bG=\rO^\pm_{2n}$, then $\check \bG=\rO^\pm_{2n}$, $\bG_{s,1}\simeq \rO^\pm_{2\nu_1(s)}$, and $\bG_{s,-1}\simeq \rO^\pm_{2\nu_{-1}(s)}$;
\item[(iv)] if $\bG=\rU_m$ or if $\lambda\ne\pm 1$, then $\bG_{s,\lambda}$ is either a general linear group or a unitary group.
\end{itemize}

\subsubsection{Unitary groups}\

When $\bG=\rU_m$,  the endoscopic group $ \bG_s$ decomposes as 
\begin{equation}
 \bG_s=\bG_{s,1}\times \bG_{s,\ne 1},
 \end{equation}
 where
\begin{equation}
\bG_{s,\ne 1}:=\prod_{\langle\lambda\rangle\subset\{\lambda_1,\ldots,\lambda_m\},\lambda\ne 1}\bG_{s,\lambda}.
\end{equation}
By composing $\cL_s^G$ with the natural bijection
\begin{equation} \label{eqn:Ls}
\begin{matrix}
\Irr^\unip(G_s)&\longrightarrow &\Irr^\unip(G_{s,1})\times\Irr^\unip(G_{s,\ne 1})\cr
\pi^\unip&\mapsto&\pi^\unip_{1}\otimes\pi^\unip_{\ne 1}
\end{matrix}
\end{equation}
we obtain a bijection
\begin{equation} \label{mod-Lus-unipotent-decomposition}
\begin{matrix}
   {\tcL}_s^G\colon \mathcal{E}(G,s)&\xrightarrow{1-1}&\Irr^\unip(G_{s,1})\times \Irr^\unip(G_{s,\ne 1}) \cr
    \pi&\mapsto& \pi^\unip_{1}\otimes\pi^\unip_{\ne 1}.
\end{matrix}
\end{equation}
By collecting the bijections $\tcL_s^G$ for all the semisimple element $s$, we obtain a bjijection 
\begin{equation} \label{eqn:JordanUU}
\tcL^G\colon\Irr(G)\longrightarrow \bigcup_{s}\Irr^\unip(G_{s,1})\times \Irr^\unip(G_{s,\ne 1}).
\end{equation}

\subsubsection{Symplectic and orthogonal groups}\

Let $\bG\in\{\SO_{2n+1},\Sp_{2n}, \rO^\pm_{2n}\}$. Then the  group $\bG_s$ decomposes as a product
 \begin{equation}
 \bG_s=\bG_{s,1}\times \bG_{s,-1}\times \bG_{s,\ne},
 \end{equation}
 where
\begin{equation}
\bG_{s,\ne}:=\prod_{\langle\lambda\rangle\subset\{\lambda_1,\ldots,\lambda_m\},\lambda\ne\pm 1}\bG_{s,\lambda}
\end{equation}
is a product of general linear groups and unitary groups.

We write
\begin{equation} \label{decomp-s}
s=s_1\times s_{-1} \times s_{\neq},
\end{equation}
where $s_1$ (resp. $s_{-1}$) is the part whose eigenvalues are all equal to $1$ (resp. $-1$), and $s_{\neq}$ is the part whose 
eigenvalues do not contain $1$ or $-1$.

By composing $\cL_s^G$ with the natural bijection
\begin{equation} \label{eqn:Ls2}
\begin{matrix}
\Irr^\unip(G_s)&\longrightarrow &\Irr^\unip(G_{s,1})\times \Irr^\unip(G_{s,-1})\times\Irr^\unip(G_{s,\ne})\cr
\pi^\unip&\mapsto&\pi^\unip_{1}\otimes\pi^\unip_{-1}\otimes\pi^\unip_{\ne}
\end{matrix}
\end{equation}
we obtain a bijection
\begin{equation}\label{mod-Lus-unipotent-decomposition2}
\begin{matrix}
   {\tcL}_s^G\colon \mathcal{E}(G,s)&\xrightarrow{1-1}&\Irr^\unip(G_{s,1})\times \Irr^\unip(G_{s,-1})\times\Irr^\unip(G_{s,\ne}) \cr
    \pi&\mapsto& \pi^\unip_{1}\otimes\pi^\unip_{-1}\otimes\pi^\unip_{\ne}.
\end{matrix}
\end{equation}
By collecting the bijections $\tcL_s^G$ for all the semisimple element $s$, we obtain a bjijection 
\begin{equation} \label{eqn:JordanSpO}
\tcL^G\colon\Irr(G)\longrightarrow \bigcup_{s}\Irr^\unip(G_{s,1})\times \Irr^\unip(G_{s,-1})\times\Irr^\unip(G_{s,\ne}).
\end{equation}

\begin{notation} \label{notation:Epi}
{\rm
Let $s$ be a semisimple element in $\SO_{2n}^\epsilon$ and let $\pi\in\cE(\rO_{2n}^\epsilon(k),s)$. We attach to $\pi$ the following set of representations:
\[
\cE(\pi):=\left\{ \pi^\unip_{1}\otimes\pi^\unip_{-1}\otimes\pi^\unip_{\ne}, 
\pi^\unip_{1,\sgn}\otimes\pi^\unip_{-1}\otimes\pi^\unip_{\ne},
\pi^\unip_{1}\otimes\pi^\unip_{-1,\sgn}\otimes\pi^\unip_{\ne},
\pi^\unip_{1,\sgn}\otimes\pi^\unip_{-1,\sgn}\otimes\pi^\unip_{\ne}
\right\},
\]
where for  $\pi\in \Irr(\rO_{2n}^\epsilon(k))$, we write $\pi_{\sgn}:=\pi\otimes\sgn$.}
\end{notation}

\subsubsection{The case of $\bG=\rO_{2n+1}$} \

We have $\bG^\circ=\SO_{2n+1}$ and $\rO_{2n+1}\simeq\SO_{2n+1}\times\{\pm 1\}$. For $\pi\in \Irr(\SO_{2n+1}(k))$, we set 
\[\pi^+:=\pi\otimes 1 \quad\text{and}\quad \pi^-:=\pi\otimes\sgn.\]
We obtain

\[\Irr(\rO_{2n+1}(k))=\left\{\pi^+,\pi^-\,:\, \pi\in\Irr(\SO_{2n+1}(k))\right\}.\]
We have $\check\bG^{\circ}=\Sp_{2n}$.
Let $s$ be a semisimple element of $\Sp_{2n}$. We set
\[\cE(\rO_{2n+1},s,\pm):=\left\{\pi^\pm\,:\, \pi\in\cE(\SO_{2n+1},s)\right\}.\]
We have
\begin{equation} \label{eqn:IrrOodd}
\Irr(\rO_{2n+1}(k))=\bigcup_{s\in \Sp_{2n}(k)\atop \zeta\in\{+,-\}}\cE(\rO_{2n+1},s,\zeta).
\end{equation}

\begin{remark} {\rm If $\pi\in \cE(\Sp_{2n},s)$ or if $\pi\in \cE(\rO_{2n+1},s,\zeta)$, then we have $\pi(-1)=\zeta_\pi\pi(1)$ with $\zeta_\pi\in\{+,-\}$.}
\end{remark} 

\subsubsection{Notation} \label{subsubsec:Not}\

For later use, we set
\begin{equation} \label{equation:Gnatural}
\bG_{s,\natural}:=\begin{cases}
\bG_{s,-1}\times \bG_{s,\ne}&\text{if $\check G\in\{\SO_{2n+1},\rO^\epsilon_{2n}\}$,}\cr
\bG_{s,1}\times \bG_{s,\ne}&\text{if  $\check G=\SO_{2n}^\epsilon$,}
\end{cases}
\end{equation}
and we write
\begin{equation}
\pi^\unip_\natural:=\begin{cases}
\pi^\unip_{-1}\otimes\pi^\unip_{\ne}
&\text{if $\check G\in\{\SO_{2m+1},\rO^\pm_{2m}\}$,}\cr
\pi^\unip_{1}\otimes\pi^\unip_{\ne}&\text{if $\check G=\SO_{2m}$,}
\end{cases}
\end{equation}
so that we have
\begin{equation}
\pi^\unip=\begin{cases}
\pi^\unip_1\otimes\pi^\unip_\natural&\text{if $\check G\in\{\SO_{2m+1},\rO^\pm_{2m}\}$,}\cr
\pi^\unip_{-1}\otimes\pi^\unip_{\natural}&\text{if $\check G=\SO_{2m}$.}
\end{cases}
\end{equation}

\subsection{Reduction of the theta correspondence to unipotent representations} \label{sec:reduc}

The following results show that the description of the theta correspondence reduces to describe its restriction to unipotent representations.

\subsubsection{Pairs $(\rU_{m},\rU_{m'})$}
\begin{thm} \label{thm:reducUU}
Let $(\bG,\bG'):=(\rU_{m},\rU_{m'})$, let $(\pi,\pi')\in\cE(G,s)\times\cE(G',s')$ for some semisimple elements $s\in \check G$ and $s'\in \check G'$.

Then  $(\pi,\pi')\in\Theta_{G,G'}$  if and only if the following conditions hold:
\begin{enumerate}
\item $G_{s,\ne 1}\simeq G^{\prime}_{s',\ne 1}$, and $\pi^\unip_{\neq 1}=(\pi')^\unip_{\neq 1}$;
\item $\pi^\unip_1\otimes(\pi')^\unip_1$ occurs in $\Theta_{G_{\vphantom{s'}s,1},G'_{s',1}}$.
\end{enumerate}
\end{thm}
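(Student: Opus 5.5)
The approach is to use that every irreducible representation of a unitary group is uniform (as recalled above), so the theta correspondence for $(\rU_m,\rU_{m'})$ is determined by the decomposition of the class function $\omega_{G,G'}$ on $G\times G'$ in terms of Deligne--Lusztig characters. The first step is to invoke Srinivasan's formula (in the form used in \cite{AMR}) for the Weil representation restricted to a unitary dual pair:
\[
\omega_{G,G'}=\sum \frac{1}{|W_{\bT}|\,|W_{\bT'}|}\cdots\, R^{\bG\times\bG'}_{\bT\times\bT'}(\theta\otimes\theta'),
\]
which, after regrouping, expresses $\omega_{G,G'}$ as a sum over pairs of maximal tori of the form $\bT=\bT_0\times\bT_1$ and $\bT'=\bT_0\times\bT'_1$, with a common ``shared'' part $\bT_0$ carrying a character $\theta_0$ paired with itself (up to the fixed twist by $\xi$, which is trivial or absorbed into $\omega_{G,G'}$), and a residual part $\bT_1,\bT'_1$ carrying trivial characters. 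Equivalently, $\omega_{G,G'}$ is a linear combination of the products $R^{\bG}_{\bL}(\sigma\otimes\rho)\otimes R^{\bG'}_{\bL'}(\sigma\otimes\rho')$, where $\bL=\bG_0\times\rU_{m_1}$ and $\bL'=\bG_0\times\rU_{m'_1}$ share a factor $\bG_0$ and $\rho\otimes\rho'$ runs over the unipotent part of the smaller Weil representation $\omega_{\rU_{m_1},\rU_{m'_1}}$.

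Next, project to Lusztig series. Pairing with $\pi\otimes\pi'$, with $\pi\in\cE(G,s)$ and $\pi'\in\cE(G',s')$, only terms whose torus characters correspond to $s$ and $s'$ survive. The shape of the formula forces the eigenvalues of $s$ and $s'$ different from $1$ to coincide with multiplicities, because they come from the common $\theta_0$. Hence $G_{s,\ne1}\simeq G'_{s',\ne1}$, since both are determined by the same data $\langle\lambda\rangle,\nu_\lambda$. The eigenvalue-$1$ parts come from the unipotent Weil piece for $(\rU_{\nu_1(s)},\rU_{\nu_1(s')})$. Then apply $\cL^G_s\otimes\cL^{G'}_{s'}$. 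By \eqref{eqn:ee} this sends $\varepsilon R_\bT^\bG(\theta)$ to $\varepsilon R^{\bG_s}_\bT(1)$, and by the remark above it is uniquely determined on uniform characters. The projection of $\omega_{G,G'}$ onto $\cE(G,s)\times\cE(G',s')$ therefore becomes
\[
\Bigl(\sum_{\rho\in\Irr^\unip(G_{s,\ne1})}\rho\otimes\rho\Bigr)\otimes\omega^{\unip}_{G_{s,1},G'_{s',1}} .
\]
The first factor is the unipotent part of the ``diagonal'' $\sum_\rho \rho\otimes\bar\rho$ coming from the regular representation; this is computed via orthogonality of the $R_{\bT}(1)$, and the conjugation is harmless since unipotent characters of $\rU$ and $\GL$ are rational. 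Reading off nonzero multiplicities gives exactly conditions (1) and (2).

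The main obstacle is the second step: matching signs and normalisations precisely. The $\varepsilon_G\varepsilon_{G_s}$ signs and the twist by $\xi\circ\det$ must cancel between $G$ and $G'$, and the coefficients must reorganise into a genuine product in which the $\ne1$ part is exactly the diagonal of unipotent characters. I would first verify the formula on the level of uniform projections using Srinivasan's explicit character of $\omega$ on $\bT\times\bT'$. Then I would exploit that uniform projections are faithful for unitary groups, so equality of uniform projections gives equality of characters.
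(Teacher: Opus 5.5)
Your outline is correct and is essentially the argument of \cite[Th\'eor\`eme~2.6]{AMR}, which is all the paper offers as proof. That argument runs as follows: expand the Weil character of $(\rU_m,\rU_{m'})$ in Deligne--Lusztig characters via G\'erardin's character formula (for unitary pairs this is the relevant input rather than Srinivasan's Sp--O formula); the common shared part forces the eigenvalues different from $1$ to match; then transport the projection onto $\cE(G,s)\times\cE(G',s')$ through the Jordan decomposition, which for unitary groups is determined by its effect on uniform functions. The sign and coefficient bookkeeping you single out (the $\varepsilon_G\varepsilon_{G_s}$ factors and the alternating signs in the unitary Weil formula) is where the real verification lies, but it is a computation within this strategy rather than a missing idea.
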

\begin{proof}
See \cite[Th\'eor\`eme~2.6]{AMR}.
\end{proof}

\subsubsection{Pairs $(\Sp_{2n},\rO_{2n'}^\epsilon)$}
\begin{thm}  \label{thm:reducSpOeven}
Let $(\bG,\bG'):=(\Sp_{2n},\rO_{2n'}^\epsilon)$, let $(\pi,\pi')\in\cE(G,s)\times\cE(G',s')$ for some semisimple elements $s\in \SO_{2n+1}(k)$ and $s'\in \SO_{2n'}^\epsilon(k)$,

Then  $(\pi,\rho')\in\Theta_{G,G'}$ for some $\tilde\rho'\in \cE(\pi')$ such that $\tcL_{s'}^{G'}(\rho')=\tilde\rho'$ if and only if the following conditions hold:
\begin{enumerate}
\item $s_{\neq}=s'_{\neq}$ (up to conjugation), and $\pi^\unip_{\neq}=(\pi')^\unip_{\neq}$;
\item $G_{s,-1}\simeq G^{\prime}_{s',-1}$, and $\pi^\unip_{-1}\in\{(\pi^{\prime})^{\unip}_{-1},(\pi')^{\unip}_{-1,\sgn}\}$;
\item $\pi^\unip_1\otimes(\pi')^\unip_1$ or $\pi^\unip_1\otimes(\pi')^\unip_{1,\sgn}$ occurs in $\Theta_{\Sp_{\vphantom{s'}2\nu_1(s)},\rO^\epsilon_{2\nu_1(s')}}$.
\end{enumerate}
\end{thm}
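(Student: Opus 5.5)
The plan is to follow the strategy of \cite{AMR} (and its refinement in \cite{Pan0}, \cite{MQZ}): compute the Weil character $\omega_{G,G'}$ on the product group $G\times G'$ in terms of Deligne--Lusztig characters, and then transport the resulting identity through the Jordan decompositions $\tcL^G_s$ and $\tcL^{G'}_{s'}$.

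\textbf{Step 1: Srinivasan-type formula.} The starting point is Srinivasan's formula for $\omega_{G,G'}$ restricted to $G\cdot G'$. Its uniform projection is
\[
\omega_{G,G'}^{\sharp}=\sum_{\bT,\bT',\theta,\theta'} c_{\bT,\bT'}\,R_\bT^\bG(\theta)\otimes R_{\bT'}^{\bG'}(\theta'),
\]
where the sum runs over pairs of tori of the shape $\bT=\bT_0\times\bT_1$ and $\bT'=\bT_0\times\bT'_1$ with a common factor $\bT_0$. On $\bT_0$ the characters are matched ($\theta|_{T_0}=\theta'|_{T_0}$), and on the remaining factors they are trivial, up to the quadratic twist absorbed into the definition \eqref{eqn:omegaGGdef}.

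In dual terms, this says the following about the parts of $s$ and $s'$:
\begin{itemize}
\item the parts $s_{\ne}$ and $s'_{\ne}$ coincide;
\item the $-1$-eigenspaces have equal rank;
\item only the $+1$-parts may differ in size.
\end{itemize}
This forces $\Theta_{G,G'}\cap(\cE(G,s)\times\cE(G',s'))=\emptyset$ unless conditions (1) (on $s$) and (2) (on ranks) hold at the level of semisimple classes.

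\textbf{Step 2: Transport through the Jordan decomposition.} Using property \eqref{eqn:ee} of $\cL_s^G$ and $\cL_{s'}^{G'}$, the component of $\omega^\sharp$ in $\cE(G,s)\otimes\cE(G',s')$ is carried to
\[
\bigl(\text{diagonal of }R^{G_{s,\natural}}\otimes R^{G'_{s',\natural}}\bigr)\otimes\omega^\sharp_{\Sp_{2\nu_1(s)},\rO^\epsilon_{2\nu_1(s')}}.
\]
\begin{itemize}
\item The diagonal factor is the uniform projection of $\sum_\rho \rho\otimes\rho$ over $\Irr^\unip(G_{s,\ne}\times G_{s,-1})$.
\item The second factor is the Weil representation of the smaller ortho-symplectic pair, restricted to unipotent series.
\end{itemize}
Comparing uniform projections then gives conditions (1)--(3) modulo non-uniform ambiguity.

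\textbf{Step 3: Resolve the non-uniform ambiguity.} The main obstacle is that symplectic and even orthogonal groups have non-uniform representations: uniform projections do not separate $\rho$ from $\rho\otimes\sgn$ on the $\rO$-factors $G_{s,-1}$ and $G'_{s',\pm1}$. This is exactly why the statement is phrased up to the set $\cE(\pi')$ of Notation~\ref{notation:Epi}, and why conditions (2) and (3) allow the $\sgn$-twisted alternatives.

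To handle this, I would argue in two parts.
\begin{itemize}
\item \emph{Unipotent identification.} I would invoke \cite[Proposition~1.7]{AMR}, which defines $\cL^{G'}_{s'}$ for $\rO^\pm_{2n}$ only up to these $\sgn$-twists. Then I would show that the uniform projection of $\rho$ determines its Lusztig family and its symbol up to the twist.
\item \emph{Non-uniform parts.} For these, I would use Harish-Chandra compatibility (Theorem~\ref{thm:HC-HC}) to reduce to cuspidal supports. On cuspidals, the first-occurrence statement and the conservation relation pin down the correspondence up to the same $\sgn$ ambiguity.
\end{itemize}
Combining the uniform identity with this cuspidal-support matching yields the stated equivalence.
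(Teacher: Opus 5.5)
\textbf{There is a genuine gap, in Step~3.} The paper gives no argument of its own here; it cites \cite[Theorem~5.10]{Pan-MathAnn}. Your Steps~1--2 are the standard uniform part of such a proof. Write $\chi^\sharp$ for uniform projection, $\Phi$ for transport by $\tcL^G_s\otimes\tcL^{G'}_{s'}$, and $\Delta$ for the diagonal on the $\natural$-parts. Then Srinivasan's formula and \eqref{eqn:ee} show that the genuine characters $\Phi(\omega_{s,s'})$ and $\omega^{\unip}_{\mathrm{small}}\otimes\Delta$ have the same uniform projection. Step~1 also correctly yields the semisimple-class part of (1)--(2) in the ``only if'' direction, since a nonzero character has nonzero uniform projection (pair it with the regular character, which is uniform).

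Step~3 is where all the content of the theorem lies, and it does not work. You treat the obstruction as non-injectivity of $\rho\mapsto\rho^\sharp$, i.e.\ the $\rho$ versus $\rho\otimes\sgn$ ambiguity on $\rO$-factors. Your claim that $\rho^\sharp$ determines the symbol up to twist is true but beside the point. What you need is that a \emph{sum} of irreducibles is determined by its uniform projection up to those twists, i.e.\ linear independence of the $\rho^\sharp$ modulo twists. This already fails for unipotent characters of $\Sp_4(k)$. In the family with symbols $\binom{0\,2}{1},\binom{1\,2}{0},\binom{0\,1}{2},\binom{-}{0\,1\,2}$, Lusztig's Fourier matrix gives
\[
\pi_{\binom{0\,2}{1}}^\sharp+\pi_{\binom{-}{0\,1\,2}}^\sharp=\pi_{\binom{1\,2}{0}}^\sharp+\pi_{\binom{0\,1}{2}}^\sharp ,
\]
and both sides have degree $q(q^2+1)$. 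No $\sgn$-twist acts on the $\Sp$-factor, and condition (3) pins $\pi^\unip_1$ down exactly. So when $\nu_1(s)=2$, uniform data cannot rule out that $\Phi(\omega_{s,s'})$ differs from $\omega^{\unip}_{\mathrm{small}}\otimes\Delta$ by trading one such sum (tensored with a fixed $\sigma$) for the other. For the same reason, nonvanishing of $(\omega^\sharp,\pi\otimes\pi')$ does not give the ``if'' direction.

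The tools you propose for the non-uniform part do not close this. Theorem~\ref{thm:HC-HC} only identifies the cuspidal support of the lifts. The first-occurrence result and the conservation relation only concern cuspidal representations and first-occurrence indices. None of them says which member of a non-cuspidal Harish-Chandra series occurs, or even which irreducibles occur in lifts beyond first occurrence.

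In the example, Harish-Chandra series would separate the cuspidal symbol $\binom{-}{0\,1\,2}$ from the other three. But that requires three things your proposal neither supplies nor proves:
\begin{enumerate}
\item that $\tcL$ is compatible with Harish-Chandra induction (among the listed properties of the Jordan decomposition, only preservation of cuspidality is available);
\item that the uniform projections of the members of a single Harish-Chandra series are linearly independent modulo $\sgn$;
\item the unipotent theorem for the smaller pair, used as an input.
\end{enumerate}
As written, Step~3 restates the goal. Some genuinely non-uniform input of this kind is indispensable, and your proposal does not identify one.
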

\begin{proof}
See \cite[Theorem~5.10]{Pan-MathAnn}.
\end{proof}

\subsubsection{Pairs $(\Sp_{2n},\rO_{2n'+1}^\epsilon)$}
\begin{thm}  \label{thm:reducSpOodd}
Let $(\bG,\bG'):=(\Sp_{2n},\rO_{2n'+1})$, let $(\pi,\pi')\in\cE(G,s)\times\cE(G',s',\zeta')$ for some semisimple elements $s\in \SO_{2n+1}(k)$ and $s'\in \Sp_{2n'}(k)$,  and some $\zeta'\in\{-,+\}$.
Write $\tcL_{s'}(\pi'|_{\SO_{2n'+1}})=(\pi')^\unip_1\otimes(\pi')^\unip_{-1}\otimes(\pi')^\unip_{\neq}$.

Then  $(\pi,\pi')\in\Theta_{G,G'}$ if and only if the following conditions hold:
\begin{enumerate}
\item $s_{\neq}=-s'_{\neq}$ (up to conjugation), and $\pi^\unip_{\neq}=(\pi')^\unip_{\neq}$;
\item $G_{s,1}=\Sp_{2\nu_1(s)}\simeq G^{\prime}_{s',-1}=\Sp_{2\nu_{-1}(s')}$, and $\pi^\unip_{-1}=(\pi^{\prime})^{\unip}_{1}$;
\item $\pi^\unip_{-1}\otimes(\pi')^\unip_{1}$ or $\pi^{\unip}_{-1,\sgn}\otimes(\pi')^\unip_{1}$ occurs in $\Theta_{\Sp_{\vphantom{s'}2\nu_{-1}(s)},\rO^\epsilon_{2\nu_{1}(s')}}$.
\item $\zeta_{\pi'}=\zeta'$.
\end{enumerate}
\end{thm}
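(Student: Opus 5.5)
The plan is to deduce this statement from the even-orthogonal case, Theorem~\ref{thm:reducSpOeven}, together with the decomposition $\rO_{2n'+1}(k)\simeq\SO_{2n'+1}(k)\times\{\pm1\}$. I follow Pan's strategy in \cite{Pan-MathAnn}, which is where this result should come from.

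\textbf{Step 1: separate the central character.} Since $\rO_{2n'+1}(k)=\SO_{2n'+1}(k)\times\{\pm 1\}$, every $\pi'\in\Irr(\rO_{2n'+1}(k))$ has the form $\pi'=\pi_0'\otimes(\pm)$. The element $-1\in\rO(V')$ acts on $W=V\otimes V'$ by $-1$, exactly as $-1\in\Sp(V)$ does. Hence in $\omega_{G,G'}$ the operators $\omega(-1_G)$ and $\omega(-1_{G'})$ agree, up to the twist by $(\xi\circ\det_{G'})^{n}$ from \eqref{eqn:omegaGGdef}, which evaluates at $-1$ to the fixed sign $\xi(-1)^{n}$. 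Consequently any pair $(\pi,\pi')\in\Theta_{G,G'}$ satisfies a fixed relation between $\zeta_\pi$ and the sign by which $-1$ acts on $\pi'$. This gives condition (4), and it reduces everything to computing $\Theta_{\Sp_{2n},\SO_{2n'+1}}$.

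\textbf{Step 2: uniform projection.} Next I would write the uniform projection of $\omega_{G,G'}$ in terms of Deligne--Lusztig characters. Srinivasan's formula expresses it as a sum over pairs of tori $(\bT,\bT')$ sharing a common factor $\bT_0$ (with the remaining parts split) of
\[R^{\Sp}_{\bT}(\theta)\otimes R^{\SO}_{\bT'}(\theta'),\]
where $\theta$ and $\theta'$ agree on the common factor. For the odd orthogonal group the characters get matched with a twist by the quadratic character $\xi$ on the common part. Under the duality $(\bT,\theta)\leftrightarrow(\check\bT,s)$ this twist is the substitution $s\mapsto -s$ on the shared eigenvalues. That is why the statement has $s_{\ne}=-s'_{\ne}$, and why the eigenvalue $1$ for $s$ pairs with $-1$ for $s'$ and conversely.

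\textbf{Step 3: factor through Lusztig's bijection.} I would then apply $\tcL^G_s$ and $\tcL^{G'}_{s'}$, using property \eqref{eqn:ee}. After this, the Lusztig series component of $\omega$ in $\cE(G,s)\otimes\cE(G',s')$ factors as a product of three pieces:
\begin{itemize}
\item the identity on the $\ne$ part, giving condition (1);
\item the identity on the $G_{s,1}\leftrightarrow G'_{s',-1}$ part, giving condition (2), since both are symplectic of the same rank;
\item the unipotent uniform projection of the even dual pair $\Sp_{2\nu_{-1}(s)}\times\rO^\epsilon_{2\nu_1(s')}$, giving condition (3).
\end{itemize}
The $\sgn$ ambiguity in (3) arises because $\rO^\epsilon$ versus $\SO$ representations are only determined up to twisting by $\sgn$ under uniform projection.

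\textbf{Main obstacle.} Non-uniform representations are not detected by uniform projections, so Step 3 only gives the statement at the level of uniform parts. To lift it to genuine occurrence, I would argue by induction on $n+n'$. Theorem~\ref{thm:HC-HC} controls the Harish-Chandra series on both sides. Jordan decomposition commutes with parabolic induction, so occurrence for non-cuspidal $\pi$ reduces to smaller Levi subgroups. The cuspidal case then has to be handled via first occurrence and the conservation relation. Getting the $\sgn$ twists and the sign $\zeta$ exactly right in this inductive step, which requires the precise normalization of $\omega_{G,G'}$, is the part I expect to be hardest.
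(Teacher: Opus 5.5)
\textbf{There is a genuine gap: your Steps 1--3 only prove an identity of uniform projections, and the step you postpone is the whole difficulty.} The setup itself is sound. The element $-1_W$ lies in both $G$ and $G'$, so it gives the sign condition. The quadratic twist in the odd-orthogonal uniform formula is what produces $s'\leftrightarrow -s$.

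Write $f^\sharp$ for the uniform projection. The functions $\rho^\sharp\otimes\rho'^\sharp$, for $(\rho,\rho')\in\cE(G,s)\times\cE(G',s')$, are linearly dependent. A family of unipotent characters of $\Sp$ with $2\delta+1$ singles has $2^{2\delta}$ members but only $\binom{2\delta+1}{\delta}$ uniform almost characters. Hence $\omega^\sharp$ does not determine the multiplicities $\mult_{\pi,\pi'}$. Uniform projections do not even separate Harish-Chandra series: in $\Sp_4(k)$, the uniform projection of the cuspidal unipotent character is a combination of those of the three principal-series characters in its family.

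Your proposed induction does not supply the missing information.
\begin{itemize}
\item Theorem~\ref{thm:HC-HC} only says in which Harish-Chandra series the lifts lie, not which members of that series occur.
\item Knowing the correspondence for Levi subgroups does not determine it inside a series. \cite{AMR} had exactly these tools and could only \emph{conjecture} the symplectic--orthogonal unipotent case (Theorem~\ref{thm:AMRconj}); proving it needed the separate arguments of \cite{PanAJM,MQZ}.
\item For cuspidal $\pi$, the first-occurrence and conservation results give indices only. They do not give the Lusztig parameters of $\theta^0(\pi)$, let alone of the later non-cuspidal lifts.
\end{itemize}

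What is needed is an argument that turns the uniform identity into an identity of genuine characters. One possibility: take the unipotent correspondence for the even pair (Theorem~\ref{thm:main}) as known input. Then show the transported decomposition is forced, using a compatibility of $\cL_s$ with Harish-Chandra induction fine enough to see individual members of a series. The argument must also fix the $\sgn$-ambiguity of $\cL_s$ on the $\rO^\pm_{2\nu_{-1}(s)}$-factor. This is the substance of \cite[Theorem~7.6]{Pan-MathAnn}, which the paper cites rather than proves.

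Two smaller points.
\begin{itemize}
\item Theorem~\ref{thm:reducSpOeven}, announced as your starting point, is never used: nothing in Steps 1--3 transfers the even-orthogonal result to $\rO_{2n'+1}$.
\item In Step 3 the groups are not ``both symplectic''. By (i), $G'_{s',\pm1}\simeq\SO_{2\nu_{\pm1}(s')+1}$, which is matched with symplectic groups only through symbols. Also, $\pi^\unip_{-1}$ is a representation of $G_{s,-1}\simeq\rO^\pm_{2\nu_{-1}(s)}$. So the even pair that actually appears is $(\rO^\epsilon_{2\nu_{-1}(s)},\Sp_{2\nu_1(s')})$, and the matching in (2) is $\pi^\unip_1\leftrightarrow(\pi')^\unip_{-1}$. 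The printed conditions have these indices garbled, and you copied them.
\end{itemize}
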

\begin{proof}
See \cite[Theorem~7.6]{Pan-MathAnn}.
\end{proof}

\section{The theta correspondence for unipotent representations} \label{sec:unip}

\subsection{Dual pairs of type II}\

Let $\bG$ be a connected reductive algebraic group defined over $k$, and let $\bT$ be a maximal torus in $\bG$. Let $F$ be the Frobenius endomorphism and $W$ the Weyl group of $\bG$.

\begin{defn} {\rm
The  \textit{$F$-conjugation} in $\bG$ is defined to be the action of $\bG$ on itself defined for any $g\in \bG$ by $x\mapsto gx({}^Fg)^{-1}$.

The $F$-conjugacy class of the image of $g^{-1}{}^Fg$ in $W$ is called the \textit{type} of the torus ${}^g\bT$ with respect to $\bT$; a representative of this 
$F$-conjugacy class is called a type of ${}^g\bT$ with respect to $\bT$.}
\end{defn}

By the conjugation action of $g^{-1}$, the torus ${}^g\bT$, endowed with the action of $F$, is identified with the torus $\bT$ endowed with the action of $wF$, if $w$ is the image of $g^{-1}{}^Fg$ in $W$. 
We will denoted by \textit{$\bT_w$} this torus.

\smallskip

We now take for $\bG$  the general linear group $\GL_n$. The unipotent irreducible representations of $G=\GL_n(k)$ are the irreducible components of $\ii_{\bT,\bB}^\bG (1)$, where $\bB$ is a Borel subgroup of 
$\bG$ and $\bT\subset\bB$ a maximal torus, and $1$ denotes the trivial representation of $T=\bT(k)\simeq k^\times$. 
Hence  the  set $\Irr_{\unip}(G)$ is in bijection with $\Irr(\fS_n)$, where $\fS_n$ is symmetric group. It follows that the unipotent irreducible representations of $G$ are parametrized  the partitions of $n$. 
We denote by $\pi_\lambda$ and $E_\lambda$  the irreducible representation of $G$ and $\fS_n$, respectively, which corresponds to the partition $\lambda$ of $n$.

We consider the \textit{almost character}:
\begin{equation}
 R^G_\lambda:=\frac{1}{n!}\sum_{w\in \fS_n}\Tr(E_\lambda)(w)\,R_{\bT_w}^{\bG}(1).
 \end{equation}

\begin{prop}
For every $\lambda\in\cP(n)$, the almost character $R_{E_\lambda}:=R_\lambda$ is a unipotent irreducible character of $G$, and each such character is of this form for some partition $\lambda$ of $n$.
\end{prop}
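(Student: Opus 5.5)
We must show that for $G=\GL_n(k)$ each almost character $R_\lambda$ is (up to sign, and here in fact exactly) an irreducible unipotent character, and that every unipotent irreducible character arises this way. The plan is to compute inner products of the $R_\lambda$ with the Deligne--Lusztig orthogonality formula, and then identify the $R_\lambda$ with the constituents of $\ii_{\bT,\bB}^\bG(1)$ through the Howlett--Lehrer/Curtis comparison between $\End_G(\ii_{T}^G 1)$ and the Hecke algebra of $\fS_n$.

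\textbf{Step 1 (orthonormality).} Since $F$ acts trivially on $W=\fS_n$ for the split group $\GL_n$, the Deligne--Lusztig inner product formula gives
\[(R_{\bT_w}^\bG(1),R_{\bT_{w'}}^\bG(1))_G=|\{x\in\fS_n: xwx^{-1}=w'\}|=|\Cent_{\fS_n}(w)|\] when $w,w'$ are conjugate, and $0$ otherwise. Substituting this into the definition of $R_\lambda$ and using the orthogonality relations for the characters of $\fS_n$, we obtain $(R_\lambda,R_\mu)_G=\delta_{\lambda\mu}$. Thus each $R_\lambda$ is a virtual character of norm $1$, hence $\pm$ an irreducible character. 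Moreover, each constituent lies in $\cE(G,1)$, so it is unipotent by Definition~\ref{defn:unipotent}.

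\textbf{Step 2 (sign and surjectivity).} For $w=1$, the torus $\bT_1$ is the split torus, and $R_{\bT_1}^\bG(1)=\ii_{\bT,\bB}^\bG(1)$. Inverting the definition by orthogonality of the characters of $\fS_n$ gives
\[R_{\bT_w}^\bG(1)=\sum_\lambda \Tr(E_\lambda)(w)\,R_\lambda.\] In particular $\ii_{\bT,\bB}^\bG(1)=\sum_\lambda \dim(E_\lambda)\,R_\lambda$. The left-hand side is a genuine character, and it is the sum of all unipotent irreducibles with multiplicities $\dim E_\lambda$: its endomorphism algebra is the Iwahori--Hecke algebra $H(\fS_n,q)\simeq\CC[\fS_n]$. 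Write $R_\lambda=\varepsilon_\lambda\rho_\lambda$ with $\rho_\lambda$ irreducible and the $\rho_\lambda$ distinct (by Step 1). Comparing coefficients, each $\varepsilon_\lambda=+1$ because $\dim E_\lambda>0$. Hence the $R_\lambda$ are exactly the irreducible constituents of $\ii_{\bT,\bB}^\bG(1)$.

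\textbf{Step 3 (exhaustion).} The constituents of $\ii_{\bT,\bB}^\bG(1)$ are all the unipotent representations; this is stated in the paper just before the proposition. Equivalently, every unipotent $\pi$ meets some $R_{\bT_w}^\bG(1)$, and each such character is a combination of the $R_\lambda$. Both sets are indexed by $\cP(n)$, so every unipotent irreducible is some $R_\lambda$.

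The main obstacle is the sign determination in Step 2, namely that $R_\lambda$ is an actual character rather than minus one. The positivity argument through $\ii_{\bT,\bB}^\bG(1)$ handles this cleanly, and it uses only the fact that the $R_\lambda$ are pairwise orthonormal.
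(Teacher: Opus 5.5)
The notes state this proposition without proof; it is the classical type $\mathrm{A}$ result. So I judge your argument on its own terms.

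\textbf{The gap is in Step 1.} Steps 2 and 3 are fine once Step 1 is granted. But the sentence ``each $R_\lambda$ is a virtual character of norm $1$, hence $\pm$ an irreducible character'' assumes that $R_\lambda$ is a $\ZZ$-combination of irreducible characters, and you never prove this. Because of the factor $\frac{1}{n!}$, you only know a priori that $(R_\lambda,\rho)_G\in\mathbb{Q}$ for $\rho\in\Irr(G)$. A rational class function of norm $1$ need not be $\pm$ an irreducible character; $\frac12(\rho_1+\rho_2+\rho_3-\rho_4)$ is an example.

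This is the heart of the matter, not a technicality. Your norm computation uses only that $F$ acts trivially on $W$ and the orthogonality relations for $\Irr(W)$, so it applies verbatim to any split group, for instance $\Sp_4(\Fq)$. There, the almost characters attached to the three characters of $W(\rB_2)$ in the four-element family are orthonormal, yet each has the form $\frac12(\pm\rho_1\pm\rho_2\pm\rho_3\pm\rho_4)$ with $\rho_4$ cuspidal unipotent. A correct proof must therefore use something specific to $\fS_n$. Step 2 cannot supply it, because it starts from $R_\lambda=\varepsilon_\lambda\rho_\lambda$. Moreover, the single identity $\ii_{\bT,\bB}^\bG(1)=\sum_\lambda\dim(E_\lambda)R_\lambda$ (which also holds for $\Sp_4$) does not force the rational orthogonal matrix relating the $R_\lambda$ to the unipotent characters to be a permutation matrix.

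\textbf{The fix: use every standard parabolic, not only $\bB$.} Take a standard Levi $L_\mu\simeq\GL_{\mu_1}(k)\times\cdots\times\GL_{\mu_h}(k)$ of a parabolic $P_\mu$, with Weyl group $\fS_\mu$. Transitivity of Lusztig induction, together with $1_{L_\mu}=|\fS_\mu|^{-1}\sum_{v\in\fS_\mu}R^{\bL_\mu}_{\bT_v}(1)$, gives
\[\Ind_{P_\mu}^G(1)=\frac{1}{|\fS_\mu|}\sum_{v\in\fS_\mu}R^\bG_{\bT_v}(1)=\frac{1}{n!}\sum_{w\in\fS_n}\bigl(\Ind_{\fS_\mu}^{\fS_n}1\bigr)(w)\,R^\bG_{\bT_w}(1).\]
So $\Ind_{P_\mu}^G(1)$ is the almost character attached to the permutation character $\Ind_{\fS_\mu}^{\fS_n}1$.

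By Young's rule (the Kostka matrix is unitriangular), each $\Tr(E_\lambda)$ is a $\ZZ$-combination of these permutation characters. By linearity of $\chi\mapsto R_\chi$, $R_\lambda$ is then a $\ZZ$-combination of the genuine characters $\Ind_{P_\mu}^G(1)$, hence a virtual character. Your Steps 1--3 then go through.

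Alternatively, write $\rho_\lambda$ for the constituent of $\ii_{\bT,\bB}^\bG(1)$ labelled by $E_\lambda$. Applying the Howlett--Lehrer/Curtis comparison you mention to every $P_\mu$ gives $(\Ind_{P_\mu}^G(1),\rho_\lambda)_G=(\Ind_{\fS_\mu}^{\fS_n}1,E_\lambda)_{\fS_n}$. Since the permutation characters span the class functions on $\fS_n$, linearity yields $R_\lambda=\rho_\lambda$ directly, sign included.

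This is exactly where type $\mathrm{A}$ enters. For $W(\rB_2)$, for example, there are only four conjugacy classes of parabolic subgroups but five irreducible characters, so the parabolic permutation characters do not span.
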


Let $\lambda=(\lambda_1\ge\lambda_2\ge\cdots\ge\lambda_h)\in\cP(n)$ and  $\mu=(\mu_1\ge\mu_2\ge\cdots\ge\mu_{h'})\in\cP(m)$. We may assume that $h=h'$ by adding $0$s if necessary.  
We define  the \textit{union} of $\lambda$ and $\mu$, denoted  $\lambda\cup\mu$,  to be the partition of $n+m$ with parts  $\lambda_1,\ldots,\lambda_h,\mu_1,\ldots,\mu_{h}$. 
The usual order on $\cP(n)$ is defined by
\begin{equation}
\lambda\le\lambda'\quad\text{if and only if}\quad \lambda_1+\cdots+\lambda_i\le \lambda_1'+\cdots+\lambda_i',\text{ for all $i\in\NN$.}
\end{equation}

Let $\lambda=[\lambda_1,\ldots,\lambda_h]$ be a partition of $n$. The  \textit{height} of $\lambda$, denoted $\height(\lambda)$,  is defined to be  the largest $i$ with $\lambda_i\ne 0$.

For every index $j$, we set 
\begin{equation}
\lambda_j^\ast:= |\left\{i \,:\, \lambda_i \ge j\right\}|.
\end{equation} 
The partition ${}^t\lambda=[\lambda^\ast_1, \lambda^\ast_2, \ldots]$ is called  the \textit{transpose} of $\lambda$.
Flipping a Young diagram of  $\lambda$ of over its main diagonal (from upper left to lower right), we obtain the Young diagram of the partition ${}^t\lambda$.

If $\lambda=[\lambda_1\ge\lambda_2\ge\cdots\ge\lambda_h]$ and  $\mu=[\mu_1\ge\mu_2\ge\cdots\ge\mu_{h'}]$ are two partition we write $\mu\subset\lambda$ 
if the followings hold: 
\[\text{$\height(\mu)\le \height(\lambda)$ and
$\mu_i\le\lambda_i$  for all $1\le i\le\height(\mu)$.}\]
 If we identify $\lambda$ and $\mu$ with their Young diagrams, this means that the diagram of $\mu$ is contained in those of $\lambda$. Removing the boxes of $\lambda$ which belong to $\mu$, we obtain a skew diagram which we denote by 
 $\lambda-\mu$.
 
We will also need to consider the \textit{intersection partition} of of $\lambda$ and $\mu$:
\[\lambda\cap\mu:=[\min(\lambda_1,\mu_1), \ldots, \min(\lambda_{\min(h,h')}, \mu_{\min(h,h')})] .\]
We have $\mu\subset\lambda$ if and only if $\lambda\cap\mu=\mu$.

Let $\nu:=[\nu_1\ge \nu_2 \ge\cdots\ge \nu_{m}]$. Then we denote by $p_{\lambda=\mu}(\nu)$ the partition $[\nu_i]_{\{i:\lambda_i=\mu_i\}}$ and we put
\begin{equation}
\lambda\cap^{=}\mu:= p_{\lambda=\mu}(\lambda\cap\mu).
\end{equation}
The partitions $\lambda$and $\mu$  are said to be \textit{close} if for each $i$ we have $|\lambda_i -\mu_i| \le 1$.

We consider the free $\ZZ$-module:
\[\fR(\fS):=\bigoplus_{n\ge 0}\fR(\fS_n),\]
and define a map $\theta^{\fS}\colon \fR(\fS)\to \fR(\fS)$ by
\begin{equation}
E_{\lambda}\mapsto \sum_{\lambda'\atop
\text{${}^t\lambda'$ is close to ${}^t\lambda$}} f({}^t\lambda\cap^{=}{}^t\lambda')\,E_{\lambda'},
\end{equation}
where, if $\mu=[r^{a_1}, (r-1)^{a_2},\ldots, 1^{a_r}]$, we have put $f(\mu):=\prod_i a_i$.
\begin{thm}
The theta correspondence between unipotent representations of $G=\GL_n(k)$ and $G'=\GL_{n'}(k)$ is given by the map
\[R^G_\lambda\mapsto R^{G'}_{\theta^{\fS}(E_{\lambda})}.\]
\end{thm}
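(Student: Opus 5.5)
The approach is to compute the unipotent part of $\omega_{G,G'}$ explicitly as a $G\times G'$-module. I would then transport the computation to the symmetric groups $\fS_n\times\fS_{n'}$ through the parametrization $R^G_\lambda\leftrightarrow E_\lambda$ given by the preceding Proposition. What remains is a combinatorial identity.

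For the type II pair $(\GL_n(k),\GL_{n'}(k))$, the Weil representation restricted to $G\cdot G'$ is realized, up to a character of determinant type, as the permutation representation of $G\times G'$ on $\CC[\Mat_{n\times n'}(k)]$, with $(g,g')\cdot x=gx{g'}^{-1}$. I would first check that such a twist, if present, preserves unipotent representations, or does not matter for them. The $G\times G'$-orbits on $\Mat_{n\times n'}(k)$ are indexed by the rank $r$, with $0\le r\le\min(n,n')$. The stabilizer of a standard rank-$r$ matrix lies inside $P_r\times P'_r$, where $P_r$ and $P'_r$ are maximal parabolic subgroups with Levi factors $\GL_r\times\GL_{n-r}$ and $\GL_r\times\GL_{n'-r}$. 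On that stabilizer the unipotent radicals act trivially and the two $\GL_r$ factors are identified diagonally. Hence
\[
\omega_{G,G'}\simeq\bigoplus_{r}\bigl(\ii_{L_r}^{G}\otimes\ii_{L'_r}^{G'}\bigr)\Bigl(\CC[\GL_r(k)]\otimes 1_{\GL_{n-r}}\otimes 1_{\GL_{n'-r}}\Bigr),
\]
where $\CC[\GL_r(k)]$ is the regular bimodule. The bimodule decomposes as $\bigoplus_\rho\rho\otimes\rho^\vee$, and unipotent representations of $\GL_r(k)$ are self-dual. Since Harish-Chandra induction preserves Lusztig series, only the $\rho$ in the principal series contribute to unipotent constituents. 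This gives
\[
\omega^{\unip}=\sum_{r}\sum_{\nu\in\cP(r)}\ii_{L_r}^G(R_\nu\otimes 1)\otimes\ii_{L'_r}^{G'}(R_\nu\otimes 1).
\]

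The next step uses the Howlett--Lehrer comparison theorem. The isomorphism $\End_G(\ii_T^G1)\simeq\CC[\fS_n]$, specialised at $q\mapsto1$, is compatible with Harish-Chandra induction from standard Levi subgroups. Under it, $\ii_{L_r}^G(R_\nu\otimes 1_{n-r})$ corresponds to $\Ind_{\fS_r\times\fS_{n-r}}^{\fS_n}(E_\nu\otimes 1)$. The Pieri rule evaluates this induced module as $\sum E_\lambda$, summed over the $\lambda\supset\nu$ such that $\lambda-\nu$ is a horizontal strip. Therefore $\mult_{R_\lambda,R_{\lambda'}}$ equals the number of partitions $\nu$ with both $\lambda-\nu$ and $\lambda'-\nu$ horizontal strips.

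It remains to show that this number is $f({}^t\lambda\cap^{=}{}^t\lambda')$ when ${}^t\lambda$ and ${}^t\lambda'$ are close, and $0$ otherwise. This identifies the correspondence with $R^G_\lambda\mapsto R^{G'}_{\theta^{\fS}(E_\lambda)}$. Transposing, $\lambda-\nu$ is a horizontal strip exactly when ${}^t\nu_j\in\{{}^t\lambda_j,{}^t\lambda_j-1\}$ for every $j$. Consequently ${}^t\nu_j$ is forced to be $\min({}^t\lambda_j,{}^t\lambda'_j)$ whenever ${}^t\lambda_j\ne{}^t\lambda'_j$. Such a $\nu$ exists only if the two transposes differ by at most $1$ in each part, which is the closeness condition. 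On the indices where ${}^t\lambda_j={}^t\lambda'_j$, one chooses ${}^t\nu_j$ equal to that common value or one less, subject to ${}^t\nu$ being a partition.

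Counting these admissible choices, block by block over maximal runs of equal values in ${}^t\lambda\cap^{=}{}^t\lambda'$, is the step I expect to be the main obstacle. Within a run of $a_i$ equal parts, the set of positions lowered by one must be a terminal segment, which gives the factors $a_i$ (more precisely $a_i+1$ choices, adjusted at the boundaries where neighbouring forced values interfere). I would carry out this boundary bookkeeping carefully and match it against the normalisation of $f$.
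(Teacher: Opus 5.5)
Your reduction is sound. The paper gives no argument of its own (it cites \cite[Th\'eor\`eme~5.5]{AMR}), and your route is the standard one. The rank stratification of $\Mat_{n\times n'}(k)$, the bimodule $\CC[\GL_r(k)]=\bigoplus_\rho\rho\otimes\rho^\vee$, the restriction to unipotent $\rho$ and the Pieri rule do give $\mult_{R_\lambda,R_{\lambda'}}=\#\{\nu:\ \lambda-\nu \text{ and } \lambda'-\nu \text{ are horizontal strips}\}$.

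You do not need Howlett--Lehrer. Since the statement labels unipotent characters by almost characters, transitivity $R_{\bL}^{\bG}\circ R_{\bT_w}^{\bL}=R_{\bT_w}^{\bG}$ gives $\ii_L^G(R^L_\chi)=R^G_{\Ind\chi}$ at once. The Hecke-algebra route would also require knowing that its labelling agrees with $E\mapsto R_E$.

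Do not hedge on the twist either. Tensoring with $\xi\circ\det$ does not preserve unipotence; it moves $\cE(G,1)$ to another Lusztig series. For $n=n'=1$ such a twist would turn the multiplicity $2$ of $1\otimes 1$ into $1$. So the statement concerns the normalisation in which the type~II restriction is exactly the permutation module, which is what G\'erardin's $\omega^\flat$ gives.

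The real gap is the last step, which you leave open; the boundary corrections you anticipate do not occur. Put $\alpha={}^t\lambda$, $\beta={}^t\lambda'$, $\gamma={}^t\nu$ and $m_j=\min(\alpha_j,\beta_j)$, which is a partition.

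The missing observation is this: if $\alpha_j=\beta_j=v$ and $j'>j$ with $m_{j'}=v$, then $\alpha_{j'},\beta_{j'}\le v$ forces $\alpha_{j'}=\beta_{j'}=v$. Hence, in the single run of indices with $m_j=v$, every forced index precedes every free index. At a forced index $\alpha_j\ne\beta_j$ and $\gamma_j=v$; at a free index $\alpha_j=\beta_j=v$ and $\gamma_j\in\{v,v-1\}$. Between different runs $m_j>m_{j+1}$, so lowering by at most one cannot break monotonicity.

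So $\gamma$ is a partition exactly when, in each run, the lowered indices form a terminal segment of the free ones. This gives $a_v+1$ choices for each $v\ge 1$ and no choice for $v=0$, where $a_v$ is the multiplicity of $v$ in ${}^t\lambda\cap^{=}{}^t\lambda'$. Therefore $\mult_{R_\lambda,R_{\lambda'}}=\prod_{v\ge1}(a_v+1)$ if ${}^t\lambda$ and ${}^t\lambda'$ are close, and $0$ otherwise.

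Do not try to force this to equal $\prod_i a_i$: the definition of $f$ in the statement is a misprint for $\prod_i(a_i+1)$. As printed, $f$ vanishes for $\mu=[2]$, yet $\lambda=\lambda'=(1,1)$ admits the two choices $\nu=(1)$ and $\nu=(1,1)$. For $n=n'=1$ it gives $1$, yet the two orbits $\{0\}$ and $k^\times$ give multiplicity $2$. With $f$ corrected, your argument proves the theorem.
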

\begin{proof}
See \cite[Th\'eor\`eme~5.5]{AMR}.
\end{proof}

\begin{cor} \label{cor:AKP2}
We consider the dual pair  $(G,G'):= (\GL_n(k),\GL_{n'}(k))$ with $n'\le n$. The unipotent representations of $G$ and $G'$
with characters $R^G_\lambda$ and $R^{G'}_{ \lambda\cup (n-n')}$, where $\lambda\in\cP(n)$ and $\lambda'\in\cP(n')$, correspond under the theta correspondence.

Moreover, any representation of $G$ which belongs to the image of $R_\lambda$ under the theta correspondence is of the form
\[ \text{$R_{\mu'}$ where $\mu'\ge \lambda\cup (n-n')$}.\]
\end{cor}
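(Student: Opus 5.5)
We will deduce the corollary directly from the preceding theorem, which describes the theta correspondence for unipotent representations of $(\GL_n(k),\GL_{n'}(k))$ through the map $\theta^{\fS}$: the image of $R^G_\lambda$ is $R^{G'}_{\theta^{\fS}(E_\lambda)}$. The restriction to $\lambda'\in\cP(n')$ is implicit in that theorem, since $\theta_{G'}$ lands in $\ZZ\Irr(G')$. Thus the unipotent constituents of $\theta_{G'}(R^G_\lambda)$ are the $R^{G'}_{\lambda'}$ with $\lambda'\in\cP(n')$, ${}^t\lambda'$ close to ${}^t\lambda$, and nonzero coefficient $f({}^t\lambda\cap^{=}{}^t\lambda')$. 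By the compatibility with Lusztig series (Theorem~\ref{thm:reducUU} in its $\GL$ analogue, or directly Corollary~\ref{cor:HC}), only unipotent representations of $G'$ can occur. The whole statement therefore reduces to combinatorics of partitions.

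Closeness of transposes has a simple meaning. If ${}^t\lambda$ and ${}^t\lambda'$ are close, then column by column the Young diagram of $\lambda'$ is obtained from that of $\lambda$ by changing each column length by at most one. With $|\lambda|=n\ge n'=|\lambda'|$, this amounts to saying that $\lambda-(\lambda\cap\lambda')$ and $\lambda'-(\lambda\cap\lambda')$ are horizontal strips. The difference $\lambda'-\lambda$ is then obtained by deleting at most one box from each column of $\lambda$ and adding at most one box to each column, with net change $n'-n$.

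For the first assertion we exhibit a specific $\lambda'$ of this form. A natural candidate is obtained by deleting a horizontal strip of size $n-n'$ from $\lambda$. Note, however, that $\lambda\cup(n-n')$ is a partition of $2n-n'$, not of $n'$, so the formula as printed must be read with the roles adjusted: presumably $\lambda\in\cP(n')$ on the small side and $\lambda\cup(n-n')\in\cP(n)$ on the large side. Under that reading, adding a single row of length $n-n'$ to $\lambda$ adds exactly one box to each of the first $n-n'$ columns. Hence ${}^t(\lambda\cup(n-n'))$ and ${}^t\lambda$ are close, with each column length changing by $0$ or $1$. We then check that the coefficient $f({}^t\lambda\cap^{=}{}^t\lambda')$ is nonzero; this is a product of multiplicities $a_i$ of the equal columns. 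We would verify that the convention $f(\text{empty})=1$ holds and that the factors indexed by parts which do not occur are omitted. By the symmetry of $\Theta_{G,G'}$, this gives the correspondence in either direction.

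For the second assertion, take any $\mu'$ in the image. By the preceding paragraph, $\mu'$ is obtained from $\lambda$ (on the small side) by adding a horizontal strip of $n-n'$ boxes, at most one per column. The partial sums $\mu'_1+\cdots+\mu'_i$ are then maximized when the strip is placed as low as possible, and minimized when it is placed as a new row at the bottom. We would prove by a column-by-column exchange argument that placing the whole strip as a new row yields exactly the dominance minimum $\lambda\cup(n-n')$, giving $\mu'\ge\lambda\cup(n-n')$. \textbf{The main obstacle} is the careful bookkeeping here, and above all pinning down the intended reading of the statement (which group carries $\lambda$) together with the nonvanishing of the weight $f$. I would first test small cases, such as $n'=0$ and $n=n'$, to fix conventions.
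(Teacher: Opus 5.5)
Your reading of the misprinted statement is the right one: $\lambda\in\cP(n')$ lives on the small side and $\lambda\cup(n-n')\in\cP(n)$ on the large side. Your proof of the first assertion is fine. Inserting a part $m:=n-n'$ raises exactly the first $m$ column lengths by one, so ${}^t\lambda$ and ${}^t(\lambda\cup(m))$ are close. Your suspicion about $f$ is also justified: as printed, $f(\mu)=\prod_i a_i$ can vanish. For $\lambda=(3,2,2)$ and $m=1$ one gets ${}^t\lambda\cap^{=}{}^t(\lambda\cup(1))=[3,1]$, so $a_2=0$. The weight that actually comes out of the Pieri description of $\omega^{\unip}$ is $\prod_i(a_i+1)$. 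This counts the partitions $\kappa$ for which the skew diagrams $\lambda-\kappa$ and $\lambda'-\kappa$ are both horizontal strips, and it is always $\geq 1$. So the $\lambda'$ that occur are exactly those with ${}^t\lambda'$ close to ${}^t\lambda$.

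The genuine gap is in the second assertion, and it cannot be repaired as stated.

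\textbf{The faulty step.} You claim that, by the preceding paragraph, $\mu'$ is obtained from $\lambda$ by adding a horizontal strip of $n-n'$ boxes. This does not follow. Closeness only gives $|(\mu')^\ast_j-\lambda^\ast_j|\le 1$, and, as your own second paragraph says, some columns may lose a box while others gain one.

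\textbf{A counterexample.} Once deletions are allowed, the dominance bound is false. Take $n'=3$, $n=4$, $\lambda=(3)$ and $\mu'=(2,2)$. The transposes $(1,1,1)$ and $(2,2)$ are close and have no equal column lengths, so the coefficient is $f(\emptyset)=1$. In the Pieri picture, $\kappa=(2)$ works. Thus $R_{(2,2)}$ lies in the lift of the trivial representation of $\GL_3(k)$, yet $(2,2)\not\geq(3,1)=\lambda\cup(1)$. Already for $n=n'=2$ the trivial representation corresponds to the Steinberg one, and $(1,1)\not\geq(2)$.

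\textbf{What your column-by-column idea does prove.} Put $\nu:=\lambda\cup(m)$. Then $\mu'\geq\nu$ if and only if $\sum_{j\le J}(\mu')^\ast_j\le\sum_{j\le J}\nu^\ast_j$ for all $J$. This holds for $J\le m$, since $(\mu')^\ast_j\le\lambda^\ast_j+1=\nu^\ast_j$ there. It also holds for $J\geq\max(m,\lambda_1)$, since the right-hand side is then $n$. Hence the bound holds whenever $\lambda_1\le n-n'+1$, for instance in the stable range $n\geq 2n'$. When $\lambda_1>n-n'+1$, adding one box to each of columns $1,\ldots,m+1$ of $\lambda$ and removing one from column $\lambda_1$ always produces a counterexample.

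So the ``moreover'' clause, as printed, needs such a restriction. The paper itself gives no argument beyond the citation of \cite{AKP2}.
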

\begin{proof}
See \cite[Theorem~3]{AKP2}.
\end{proof}

\subsection{Cuspidal unipotent representations of classical groups} \label{subsec:CuspUnipt}\

Let $G$ be a classical group. It has cuspidal unipotent irreducible representations in the following cases:
\begin{itemize}
\item[$\bullet$]
$G=\GL_1(k)$ has a unique  such representation: the trivial representation;
\item[$\bullet$]
$G\in\{\Sp_{2(c^2+c)}(k),\SO_{2(c^2+c)+1}(k),\rU^\epsilon_{(c^2+c)/2}(k)\}$, with  $c$ a positive integer,  has a unique such representation, say $\pi^G_c$;
\item[$\bullet$]
$G=\SO_{2c^2}^+(k)$, with  $c$ a positive even integer, has a unique such representation, say $\pi^G_c$;
\item[$\bullet$]
$G=\SO_{2c^2}^-(k)$, with $c$ a positive  odd integer, has a unique such representation, say $\pi^G_c$;
\item[$\bullet$]
$G=\rO_{2c^2}^+(k)$,  with $c$  a positive even integer,  has two  such representations, say $\pi^G_{c,+}$ and $\pi^G_{c,-}=\pi^G_{c,+}\otimes\sign$;
\item[$\bullet$]
$G=\rO_{2c^2}^-(k)$, with $c$ a positive odd integer, has two  such representations, say $\pi^G_{c,+}$ and $\pi^G_{c,-}=\pi^G_{c,+}\otimes\sign$;
\item[$\bullet$]
$G=\rO_{2(c^2+c)+1}(k)$, with  $c$ a positive integer, has two  such representations, say $\pi^G_{c,+}$ and $\pi^G_{c,-}=\pi^G_{c,+}\otimes\sign$;
\item[$\bullet$]
$G=\rU^\epsilon_{\frac{c^2+c}{2}}(k)$, with  $c$ a positive integer such that $\epsilon=(-1)^{\frac{c^2+c}{2}}$.
\end{itemize}

\begin{thm} \label{thm:ucuspidals} \

\begin{enumerate}
\item We suppose that  $G=\Sp_{2n}(k)$ and $G'=\rO_{2n'}^\epsilon(k)$. The following holds:
\begin{enumerate}
\item If $\pi$ is a cuspidal irreducible  representation of $G$, then $\theta_{n'_{\cT'}(\pi)}(\pi)$ is a singleton $\{\pi'\}$ with $\pi'$ cuspidal irreducible.
\item If $\pi\in\Irr(G_n)$ is unipotent then any $\pi'\in \theta_{G'}(\pi)$ is unipotent.
\item The representation $\pi_c^G$ of $G=\Sp_{2(c^2+c)}(k)$ corresponds to the representation $\pi^{G'}_{c',-}$ with $c'=c$ if $\epsilon$ is the sign of $(-1)^c$, and to the representation $\pi^{G'}_{c',+}$ with $c'=c+1$ otherwise.
\end{enumerate}
\item We suppose that  $G=\rU_{n}^\epsilon(k)$ and $G'=\rU_{n'}^{\epsilon'}(k)$. The representation $\pi_c^G$ of $G$ corresponds to the representation $\pi^{G'}_{c'}$, where
\begin{enumerate}
\item if $c=0$, then $c'=0$;
\item if $c\ne 0$, then $c'=c\pm 1$ such that $\epsilon'=(-1)^{\frac{c'(c'+1)}{2}}$.
\end{enumerate}
\end{enumerate}
\end{thm}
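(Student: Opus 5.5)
The plan is to treat the three assertions in part (1) in order, and then part (2) by the same method. For (1)(a) I will use the characterization of cuspidality by vanishing Jacquet modules, combined with the filtration of the restriction $\rr^{G'}_{L'}(\omega_{G,G'})$ given by the mixed model of the Weil representation: Kudla's filtration over finite fields, as in \cite{AMR}. Let $n'_0=n'_{\cT'}(\pi)$ be the first occurrence index. For a maximal Levi subgroup $L'=G'_{n'_0-t}\times\GL_t(k)$, the filtration expresses the $\pi$-isotypic part of $\rr^{G'}_{L'}(\omega_{G,G'_{n'_0}})$ in two ways:
\begin{itemize}
\item through terms involving Jacquet modules of $\pi$, which vanish because $\pi$ is cuspidal;
\item through the theta lift of $\pi$ to $G'_{n'_0-t}$, which vanishes by minimality of $n'_0$.
\end{itemize}
Hence every $\pi'\in\theta_{n'_0}(\pi)$ is cuspidal. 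This is exactly the cuspidality theorem already quoted (\cite[Th\'eor\`eme 7]{AMR}).

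For the singleton claim in (1)(a), I will compute $\mult_{\pi,\pi'}$ via the scalar product $(\omega_{G,G'}\otimes\bar\pi,\ldots)$. The first step is to show that $\Hom_G(\omega_{G,G'_{n'_0}},\pi)$, viewed as a $G'$-module, is irreducible. Since all constituents are cuspidal, the $G'$-module is determined by its space of $G'$-endomorphisms. This space can be computed with a doubling/see-saw argument, namely the cuspidal version of Howe's argument for the Hecke algebra $\End_{G\times G'}$, in which only the open orbit contributes. I will follow Adams–Moy's argument for the finite-field case and reduce to a dimension count.

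For (1)(b), I will use the decomposition of Deligne–Lusztig characters under the Weil representation. Srinivasan's formula expresses $\omega_{G,G'}$ as a combination of terms $R^{G}_{T}(\theta)\otimes R^{G'}_{T'}(\theta')$ with paired tori, and the characters matched to the trivial character are trivial. Equivalently, one can argue through Theorem~\ref{thm:reducSpOeven}: if $\pi\in\cE(G,1)$, then $s=1$, so conditions (1) and (2) force $s'_{\neq}=1$ and $G'_{s',-1}$ trivial. Thus $s'=1$, and $\pi'$ is unipotent.

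For (1)(c), the main obstacle is to pin down exactly which $c'$ and which sign occur. The plan is as follows.
\begin{enumerate}
\item By (a) and (b), $\theta^0(\pi^G_c)$ is a single cuspidal unipotent representation of some $\rO^{\epsilon}_{2n'_0}(k)$.
\item Cuspidal unipotent representations exist only when $2n'_0=2c'^2$ with the parity constraint on $\epsilon$ listed above.
\item Use the conservation relation $n'_{\cT'_+}+n'_{\cT'_-}+c(\pi)=4n+2$ with $n=c^2+c$. Cuspidality gives $c(\pi)=0$, and this yields $c'^2_++c'^2_-=2c^2+2c+1$, solved by $\{c,c+1\}$.
\item Assign $c'=c$ to the tower with $\epsilon=(-1)^c$, and $c'=c+1$ to the other tower.
\item Determine the sign $\pm$ by evaluating the central element $-1\in\rO$. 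This can be done with the character of $\omega$ at $(1,-1)$ or by comparing degrees via \eqref{eqn:dim tau}; I expect this sign bookkeeping to be the delicate part.
\end{enumerate}

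Part (2) is handled the same way. The conservation relation for the two unitary towers gives $c'=c\pm1$, and the parity of $\tfrac{c'(c'+1)}{2}$ is forced by the existence criterion for cuspidal unipotent representations. When $c=0$, the trivial representation corresponds to the trivial representation, which follows from the invariants of $\omega$.
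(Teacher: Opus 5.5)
The paper gives no argument of its own here; it only cites Adams--Moy. Your route through the conservation relation is therefore a different one. Parts (1)(a) and (1)(b) are acceptable as sketches, but the determination of $c'$ and of the sign in (1)(c), and the matching step in (2), have genuine gaps.

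\emph{The value of $c'$.} The conservation relation only gives $(c'_+)^2+(c'_-)^2=c^2+(c+1)^2$, and this equation has other solutions that satisfy every parity constraint you list.
For $c=6$ we have $6^2+7^2=2^2+9^2$, where $2$ is even (allowed in the $\rO^+$ tower) and $9$ is odd (allowed in the $\rO^-$ tower). For $c=8$ we have $8^2+9^2=12^2+1^2$.
In the unitary case, with $T_j=j(j+1)/2$, we get $T_4+T_6=T_2+T_7=31=2T_5+1$, and $T_2$, $T_7$ lie in opposite towers. So ``solved by $\{c,c+1\}$'' is not a proof.

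The missing input is rigidity: two cuspidal representations can only correspond when each is the first occurrence of the other. Write $\Theta_{m'}(\pi):=\Hom_G(\omega_{G,G'_{m'}},\pi)$. For $\pi$ cuspidal and $m'>n'_{\cT'}(\pi)$, Kudla's filtration gives $\rr_{L'}(\Theta_{m'}(\pi))\simeq\Theta_{m'-1}(\pi)\otimes\chi\neq0$, with $L'=G'_{m'-1}\times\GL_1(k)$. Your doubling argument, run with the roles of $G$ and $G'$ exchanged, shows that $\Theta_{m'}(\pi)$ is irreducible at every level, so its constituent cannot be cuspidal.

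Combined with (a) and (b) in both directions, first occurrence becomes a bijection between pairs ($\pi^G_c$, tower) and cuspidal unipotent representations of the groups $\rO^\pm_{2c'^2}$. Then induct on $c$. The representations $\pi^G_0,\dots,\pi^G_{c-1}$ already account for every cuspidal unipotent of level $<c$ and for one of the two at level $c$. This forces $c'\ge c$ in the tower $\epsilon=(-1)^c$ and $c'\ge c+1$ in the other tower; only then does the equation give $\{c,c+1\}$. The same repair is needed in (2).
You should also check that the finite-field conservation relation you quote is not itself deduced from the cuspidal unipotent computation, or the argument becomes circular.

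\emph{The sign.} Step (5) cannot work. We have $\pi^{G'}_{c',-}=\pi^{G'}_{c',+}\otimes\sgn$. Since $\dim V'=2c'^2$ is even, $-1$ lies in $\SO^\epsilon_{2c'^2}(k)$, where $\sgn$ is trivial. So the two extensions have the same central character and the same degree. Neither the character of $\omega$ at $(1,-1)$ nor the degree formula can separate them; any argument must see elements of $\rO^\epsilon_{2c'^2}(k)\setminus\SO^\epsilon_{2c'^2}(k)$.

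The bijection above supplies the intrinsic content. At each level $c'\ge 1$, the two extensions are the first occurrences of $\pi^G_{c'-1}$ and of $\pi^G_{c'}$. Equivalently, they first occur on $\Sp_{2(c'^2-c')}$ and on $\Sp_{2(c'^2+c')}$ respectively.

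Which of the two is labelled $+$ is fixed by the symbol parametrization of $\Irr^\unip(\rO^\epsilon_{2c'^2}(k))$, namely $\pi^{G'}_{c',+}=\pi_{\Lambda_{\SO,c'}}$. By the remark $\pi_{\Lambda^t}=\sgn\cdot\pi_\Lambda$, this parametrization involves a choice. The statement says that this choice assigns $+$ to the extension with the lower first occurrence. You can check this against the defect condition in $\cB(\Sp_{2n},\rO^\epsilon_{2n'})$, which pairs $\Lambda_{\Sp,c}$ with $\Lambda^t_{\SO,c}$ and with $\Lambda_{\SO,c+1}$. That normalization has to be imported; it cannot be computed from central characters or degrees.
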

\begin{proof}
See  \cite{Adams-Moy}.
\end{proof}

Hence, for $(G,G')$ an irreducible reductive dual pair,  the theta  correspondence induces a correspondence between cuspidal unipotent representations of $G$ and $G'$, 
which is described by the function $\theta\colon\NN\to \NN$, defined by 
\begin{equation}
\theta(c):=c',
\end{equation}
with $c'$ as in Theorem~\ref{thm:ucuspidals}.

\subsection{Harish-Chandra series of unipotent representations} \label{subsec:HCunip}\

Let $G=G_n$ be a classical group. We set
\begin{equation} \label{eqn:overline{n}}
\overline{n}:=
\begin{cases}
n-c^2-c&\text{if $G=\Sp_{2n}(k)$}\cr
n-c^2 \text{ (with $\epsilon=(-1)^c$)}&\text{if $G=\rO_{2n}^\epsilon(k)$}\cr
n-c^2-c&\text{if $G=\rO_{2n+1}(k)$}\cr
n-(c^2+c)/2 \text{ (with $\epsilon=(-1)^{\frac{c^2+c}{2}}$)} &\text{if $G=\rU_{n}^\epsilon(k)$}.
\end{cases} 
\end{equation}
If the Levi subgroup $L$ of $G_n$ admits a cuspidal unipotent irreducible representation, then $L$ is of the form
\begin{equation} \label{eqn:Levi-form}
L\simeq
\begin{cases}
\Sp_{2(c^2+c)}(k)\times T_{\overline{n}}&\text{if $G=\Sp_{2n}(k)$}\cr
\rO^\epsilon_{2c^2}(k)\times T_{\overline{n}} \text{ (with $\epsilon=(-1)^c$)}&\text{if $G=\rO_{2n}^\epsilon(k)$}\cr
\rO_{2(c^2+c)}(k)\times T_{\overline{n}}&\text{if $G=\rO_{2n+1}(k)$}\cr
\rU^\epsilon_{(c^2+c)/2}(k)\times T_{\overline{n}}\text{ (with $\epsilon=(-1)^{\frac{c^2+c}{2}}$)} &\text{if $G=\rU^\epsilon_{n}(k)$},
\end{cases} 
\end{equation}
where $T_{\overline{n}}=\bT_{\overline{n}}(k)$, with $\bT_{\overline{n}}$ a maximal torus of $G_{\overline{n}}$.

\begin{prop} \label{pro:principal-unipotent}
For the  irreducible reductive dual pairs 
\[(G,G')\in\{(\Sp_{2n}(k),\rO_{2n'}^+(k)),(\rU_{n}^+(k),\rU_{n}^+(k)),(\GL_n(k),\GL_{n'}(k))\},\] 
and only for these ones,  the images by the theta correspondence of every unipotent representation in the principal series of $G$ belongs to the (unipotent) principal series of $G'$.
\end{prop}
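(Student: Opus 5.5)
The plan is to reduce everything to Theorem~\ref{thm:HC-HC} in the form of Corollary~\ref{cor:HC}, applied to the trivial cuspidal datum. A unipotent representation $\pi$ in the principal series of $G$ has cuspidal support $(T,1)$, where $T$ is a maximal split torus. Equivalently, in \eqref{eqn:Levi-form} we have $c=0$, so $L=G_{0}\times \GL_1(k)^{n}$ and $\sigma=\pi_{0}\otimes 1^{n}$, with $\pi_0$ the trivial representation of the trivial group $G_0$ (for $\rO_{2n}^\epsilon$ the case $c=0$ forces $\epsilon=+$, and for $\rU_n^\epsilon$ it forces $\epsilon=+$).

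By Corollary~\ref{cor:HC}, every $\pi'\in\theta_{G'}(\pi)$ lies in $\Irr^{\sigma'}(G')$ with
\[\sigma'=\theta^0(\pi_0)\otimes 1^{r'},\]
where $\theta^0(\pi_0)$ is the first occurrence of the trivial representation of $G_0$ in the Witt tower of $G'$. By Theorem~\ref{thm:ucuspidals}(1)(b), $\theta^0(\pi_0)$ is cuspidal unipotent. The question is therefore exactly whether this first occurrence is the trivial representation of the trivial group $G'_0$, i.e.\ whether $c'=\theta(0)=0$.

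The existence direction then splits by dual pair.
\begin{itemize}
\item For $(\GL_n,\GL_{n'})$ there is nothing to check: the only cuspidal unipotent representations involved are trivial characters of $\GL_1$, so the image is automatically in the principal series. One can also see this from the explicit map $\theta^{\fS}$, whose image is spanned by the $E_{\lambda'}$.
\item For $(\rU_n^+,\rU_{n'}^+)$, Theorem~\ref{thm:ucuspidals}(2)(a) gives $\theta(0)=0$. Here one has to check that $\rU^+$ is the tower in which the trivial group $\rU_0$ lies.
\item For $(\Sp_{2n},\rO^+_{2n'})$, Theorem~\ref{thm:ucuspidals}(1)(c) with $c=0$ and $\epsilon=+=(-1)^0$ gives $c'=0$. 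So $\theta^0(\pi_0)$ is a representation of $\rO^+_0$, the trivial group, and $\sigma'$ is the trivial character of the split torus.
\end{itemize}

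For the "only these" direction I would go through the remaining pairs and show that $\theta^0(\pi_0)$ is a nontrivial cuspidal unipotent representation, so that some principal-series unipotent representation (for instance the trivial one) lifts outside the principal series.
\begin{itemize}
\item For $(\Sp_{2n},\rO^-_{2n'})$, Theorem~\ref{thm:ucuspidals}(1)(c) with $c=0$ and $\epsilon\neq(-1)^0$ gives $c'=1$, i.e.\ the cuspidal unipotent $\pi^{G'}_{1,+}$ of $\rO^-_2$.
\item Symmetrically, for $(\rO^\pm_{2n'},\Sp_{2n})$ one needs the first occurrence of $\pi^{\rO}_{0,\pm}$, i.e.\ of $1$ or $\sgn$ of $\rO_0^+$, in the symplectic tower. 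One of these two must go to $c'=1$, namely $\Sp_4$, by the symmetric form of (1)(c). This is where care is needed: if only $\sgn$ goes out of the principal series, the failure is witnessed by $\sgn\otimes$ a principal-series unipotent representation.
\item For the unitary pairs involving $\rU^-$, and for the companion (mixed-parity) towers, Theorem~\ref{thm:ucuspidals}(2)(b) gives $c'=1$, i.e.\ the cuspidal representation of $\rU_1$.
\item For the odd orthogonal–symplectic pairs, the cuspidal support of $\rO_{2n+1}$ and of $\Sp_{2n}$ in the odd setting has to be tracked through the twisting by $\xi\circ\det$ in \eqref{eqn:omegaGGdef}. There the first occurrence of the trivial representation of $\rO_1$ (with the $\pm$ sign) in the $\Sp$ tower, or vice versa, also lands on a nontrivial cuspidal in at least one case, by the Adams–Moy results cited.
\end{itemize}

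The main obstacle is this negative direction, in particular the odd orthogonal cases and keeping track of which sign character $\pi^{G}_{0,\pm}$ occurs. Theorem~\ref{thm:ucuspidals} as stated only covers $\Sp$–$\rO^{\rm even}$ and unitary pairs, so I would appeal to \cite{Adams-Moy} and \cite{AMR} for the remaining first-occurrence computations. The positive direction is immediate from Corollary~\ref{cor:HC}.
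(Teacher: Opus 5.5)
Your positive direction is the paper's argument. The paper's proof is just the combination of Theorem~\ref{thm:HC-HC} (through Corollary~\ref{cor:HC}) with the first-occurrence computation of Theorem~\ref{thm:ucuspidals}, which reduces everything to $\theta(0)=0$. The problems are in the ``only these'' direction, where some of your steps would fail.

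\textbf{The pair $(\rO^+_{2n'},\Sp_{2n})$ does not fail.} The group $\rO^+_0$ is trivial, so there is no ``$1$ or $\sgn$ of $\rO^+_0$''. Read backwards, Theorem~\ref{thm:ucuspidals}(1)(c) sends $\pi_{c',-}$ to $\pi^{\Sp}_{c'}$ and $\pi_{c',+}$ to $\pi^{\Sp}_{c'-1}$. So the jump to $\Sp_4$ is the behaviour of $\pi^{\rO^-_2}_{1,-}$, the sign character of $\rO^-_2$, and nothing in the $\rO^+$ tower behaves this way.

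Since $\sgn$ is trivial on the split torus $T\subset\SO^+_{2n'}$, $\pi$ and $\pi\otimes\sgn$ have the same cuspidal support $(T,1)$. Your own positive argument, with $\theta^0(1_{\rO^+_0})=1_{\Sp_0}$, then puts all their lifts in the principal series of $\Sp_{2n}$. Already $\sgn$ of $\rO^+_2$ pairs with the trivial representation of $\Sp_2$. Indeed, on the $2$-dimensional space of $\Sp_2\times\SO^+_2$-invariants spanned by $\delta_0$ and the constant function, the reflection acts by a multiple of the Fourier transform, so it has both eigenvalues $\pm1$. Hence this pair belongs with $(\Sp_{2n},\rO^+_{2n'})$ on the positive side, and your proposed witness ``$\sgn\otimes$ a principal-series representation'' does not exist.

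\textbf{Odd orthogonal pairs.} Your mechanism here is wrong. The first occurrence of $1_{\Sp_0}$ in an odd orthogonal tower is a character of $\rO_1$, i.e.\ the $c'=0$ datum, so no nontrivial cuspidal unipotent representation appears. Read literally with untwisted $\tau_i$, Theorem~\ref{thm:HC-HC} would even place the lifts in the principal series. The actual obstruction is that unipotence is not preserved. By Theorem~\ref{thm:reducSpOodd}, a unipotent $\pi\in\Irr(\Sp_{2n})$ can only pair with some $\pi'\in\cE(\rO_{2n'+1},s',\zeta')$ with $\nu_{-1}(s')=n$. So for $n\ge 1$ no lift is unipotent: the transported $\GL_1$-characters come back twisted by $\xi$. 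The same holds in the other direction. The input you need is this Lusztig-series statement, not Adams--Moy.

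\textbf{Conventions.} Under your convention (principal series $=$ the $c=0$ series), $\rO^-_{2n}$ and $\rU^-_n$ have empty principal series. For $G$ in those towers the property therefore holds vacuously and cannot be refuted. Your unitary bullet only treats $G=\rU^+$, $G'=\rU^-$. There the fact you need is that the first occurrence of $1_{\rU_0}$ in the odd tower is the (trivial) cuspidal unipotent representation of $\rU_1$; Theorem~\ref{thm:ucuspidals}(2)(b), being stated for $c\neq0$, does not give this.

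If instead ``principal series'' means the constituents of $\Ind_B^G 1$ for the quasi-split group, the outcome changes. By Theorem~\ref{thm:ucuspidals}(1)(c) with $c=0$, $\pi^{\rO^-_2}_{1,+}$ is the lift of $1_{\Sp_0}$, hence the trivial character of $\rO^-_2$. Likewise the cuspidal unipotent representation of $\rU_1$ is trivial. Then $(\Sp_{2n},\rO^-_{2n'})$, $(\rU^+,\rU^-)$ and $(\rU^-,\rU^+)$ would satisfy the property.

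Either way, the ``only these'' half needs a fixed convention, a corrected treatment of $(\rO^+_{2n'},\Sp_{2n})$, and a different ingredient for the odd orthogonal pairs. Your sketch provides none of these.
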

\begin{proof}
It follows from the combination of Theorem~\ref{thm:HC-HC} with Theorem~\ref{thm:ucuspidals}.
\end{proof}

\begin{prop} \label{prop:B}
Let $G=G_n$ be one of the classical groups occurring in \eqref{eqn:overline{n}}, and let $L$ be a Levi subgroup of $G$.
If $\sigma\in\Irr(L)$ is  cuspidal unipotent, then the group 
\begin{equation} \label{eqn:W}
W_{\sigma}:=\rN_{G}(L,\sigma)/L
\end{equation} 
is a finite Weyl group of type $\rB_{\overline{n}}$.
\end{prop}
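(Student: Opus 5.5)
The plan is to compute the relative Weyl group $W_\sigma=\rN_G(L,\sigma)/L$ directly from the structure of the Levi subgroup. After that, an earlier result will tell us it is a Coxeter group of type $\rB$.

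First, I would use \eqref{eqn:Levi-form}. Since $\sigma$ is cuspidal unipotent, $L$ must be of the form $G_{c}\times T_{\overline{n}}$, where $G_c$ is the small classical group of the same type carrying the cuspidal unipotent representation $\pi_c$ listed in \S\ref{subsec:CuspUnipt}. The factor $T_{\overline{n}}$ is a split torus $(k^\times)^{\overline{n}}\simeq \GL_1(k)^{\overline{n}}$; in the unitary case one uses the corresponding $\GL_1(k^2)$ factors of the $k$-split part. So $L$ is exactly of the shape \eqref{eqn:LeviMQZ}, with $n_0$ equal to the rank of $G_c$ and $r=\overline{n}$. By the definition of unipotent representations, $\sigma=\pi_c\otimes 1\otimes\cdots\otimes 1$. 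The only cuspidal unipotent representation of $\GL_1(k)$ is the trivial one, so the torus component of $\sigma$ is trivial.

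Next, I would compute $\rN_G(L)/L$. Write the underlying space as $V=X\oplus V_c\oplus X^\ast$, where $X$ is spanned by $\overline{n}$ isotropic lines and $X^\ast$ is its dual. The normalizer of $L$ modulo $L$ permutes these $\overline{n}$ lines and can swap a line with its dual. This gives the hyperoctahedral group $\{\pm1\}^{\overline{n}}\rtimes\fS_{\overline{n}}$, i.e.\ the Weyl group of type $\rB_{\overline{n}}$, acting on $T_{\overline{n}}$ by permutations and inversions. I would then check that every element fixes $\sigma$. The trivial character of the torus is fixed by permutations and inversions. The component $\pi_c$ is fixed because the elements realizing these Weyl group elements can be chosen to act trivially on $V_c$, so they centralize $G_c$. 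Hence $W_\sigma=\rN_G(L)/L\simeq W(\rB_{\overline{n}})$.

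Alternatively, one can invoke the earlier proposition directly: since $L$ has the form \eqref{eqn:LeviMQZ}, that proposition (from \cite[Theorem~1.2]{MQZ}) gives type $\rB_r$ with $r=\overline{n}$. I would present this as the main line and the explicit computation above as confirmation.

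The main obstacle is the orthogonal cases. For $\rO_{2n}^\epsilon$ with $c=0$, the group $G_c$ is trivial. One must check that the full orthogonal group, not $\SO$, supplies the extra sign changes, so that one gets type $\rB_{\overline{n}}$ rather than $\rD_{\overline{n}}$. When $c>0$, I would choose representatives of the sign changes that act by $\pm1$ on $V_c$, with determinant adjusted inside $\rO(V_c)$, and verify that they preserve $\pi_{c,\pm}$. For $\rO_{2n+1}$, the central element $-1$ has to be taken into account in the same way. I would handle these by an explicit choice of representatives in $\rO(V)$ that stabilize $V_c$ and act on it by an element commuting with $\pi_c$ up to isomorphism.
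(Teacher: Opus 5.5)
Your explicit computation is correct and proves the statement. It is a different route from the paper, which gives no argument: it cites Lusztig for the connected groups ($\Sp_{2n}$, $\rU_n$) and \cite[Lemme~3.2]{AMR} for the orthogonal groups. You reprove by hand what those references contain. First, $\rN_G(\bL)/L$ is the group of signed permutations of the weights $\pm e_i$ of the split central torus of $\bL$. Second, all of that group fixes $\sigma$, because the torus part of $\sigma$ is trivial and the action on $G_c$ is harmless. Your route is elementary and self-contained. The cited results give more, for instance that $W_\sigma=W_\sigma^\circ$ together with the parameters of the Hecke algebra $\End_G(\ii_L^G\sigma)$, but the statement does not need this.

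Three points to fix in the write-up.

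First, do not make the earlier proposition attributed to \cite{MQZ} your main line. As stated, it asserts type $\rB_r$ for \emph{every} cuspidal $\sigma$ of $G_{n_0}\times\GL_1(k)^r$, and that fails in general. For example, in $\Sp_2(k)$ with $L=T$ and $\sigma=\chi$ where $\chi^2\neq 1$ (possible for $q\ge 5$), one gets $W_\sigma=1$. It is your computation that carries the proof.

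Second, take the normalizer of the algebraic Levi $\bL$ (equivalently, of its split central torus), as in the usual definition of relative Weyl groups. For $q=3$ the finite group $(k^\times)^{\overline n}=\{\pm1\}^{\overline n}$ does not separate $\ell_i$ from $\ell_i^\ast$, and the normalizer of the finite group $L$ is too large. Already in $\mathrm{SL}_2(\FF_3)$ the torus $T=\{\pm1\}$ is central, so its normalizer is the whole group. With the algebraic convention, your weight-space argument shows that every $g\in\rN_G(\bL)$ preserves $V_c$ and permutes the lines as claimed.

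Third, the obstacle you anticipate for orthogonal groups is not there. Since $g\in\rN_G(\bL)$ preserves $V_c$ and $V_c^\perp$, the restriction $g|_{V_c}$ is an isometry of $V_c$. When $G$ is $\Sp$, $\rU$ or a full orthogonal group, that isometry is an element of $G_c$ itself. Hence $g$ acts on $G_c$ by an inner automorphism and fixes $\pi_c$ (indeed every representation of $G_c$), with no determinant adjustment needed. Such issues arise only for $\SO^\epsilon_{2n}$, which is not among the groups in the statement. The full orthogonal group matters only, as you say, in supplying the single sign changes, so that for $c=0$ one gets $\rB_{\overline n}$ rather than $\rD_{\overline n}$.
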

\begin{proof}
See \cite{LusMadison} for $G$ connected and \cite[Lemme~3.2]{AMR} for $G$ an orthogonal group.
\end{proof}

Let $\fs:=(L,\sigma)_G$ be the $G$-conjugacy class of a pair $(L,\sigma)$, with $L$ a Levi subgroup of $G$ and $\sigma$ a cuspidal irreducible representation of $L$. We denote $\fC(G)$ the set of such $\fs$. 

Let $\fR(G)$ be the category of complex representations of $G$. We define $\fR^\fs(G)$ to be the subcategory of $\fR(G)$ formed by the representations whose irreducible components have cuspidal support $\fs$.

Let $(G,G')$ be an irreducible reductive dual pair, and let $(\fs,\fs')\in\fC(G)\times\fC(G')$. We denote by $\omega_{n,n'}^{\fs,\fs'}$ the projection of the  restriction to $G\cdot G'$ of the Weil representation $\omega_{G,G'}$
(defined in \eqref{eqn:omegaGG}) onto $\fR^\fs(G)\otimes\fR^{\fs'}(G')$.

\begin{thm} \label{thm:HC-unip}
Let $(G,G')=(G_n,G'_{n'})$ be an irreducible reductive dual pair, let $\fs=(L,\sigma)_G$ such that $\sigma$ is unipotent, and let $\pi\in\Irr^\fs(G)$. 
Then we have
\[L\simeq G_{n_0}\times \GL_{1}(k)^{\overline{n}}\quad\text{and}\quad \sigma\simeq \pi_{n_0}\otimes 1^{\overline{n}},\]
with $\pi_{n_0}\in\Irr(G_{n_0})$ cuspidal unipotent,
and, every $\pi'\in\theta_{G'}(\pi)$ belongs to $\Irr^{\fs'}(G')$, where $\fs'=(L',\sigma')_{G'}$ with
\[L'\simeq G'_{n'_0}\times \GL_{1}(k)^{\overline{n}'}\quad\text{and $\sigma'=\theta^0(\pi_{n_0})\otimes 1^{\overline{n}'}$},\]
where $n'_0=n_{\cT'}(\sigma_0)$ and $\overline{n}'=n-n'_0$.
\end{thm}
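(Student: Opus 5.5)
The proof will be a combination of the structural description of unipotent cuspidal pairs with Theorem~\ref{thm:HC-HC}, specialised as in Corollary~\ref{cor:HC}.

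\textbf{Step 1: the shape of $(L,\sigma)$.} Let $\fs=(L,\sigma)_G$ with $\sigma$ cuspidal unipotent. By \eqref{eqn:Levi}, $L\simeq G_{n_0}\times\GL_{n_1}(k)\times\cdots\times\GL_{n_r}(k)$, and accordingly $\sigma\simeq\pi_{n_0}\otimes\tau_1\otimes\cdots\otimes\tau_r$. A representation of a product is unipotent exactly when each tensor factor is unipotent, because a maximal torus of $L$ is a product of maximal tori of the factors and $R_\bT^\bL(1)$ factorises accordingly. The same factorisation holds for cuspidality. Hence each $\tau_i$ is cuspidal unipotent on $\GL_{n_i}(k)$, and $\pi_{n_0}$ is cuspidal unipotent on $G_{n_0}$. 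By the list in \S\ref{subsec:CuspUnipt}, $\GL_m(k)$ has a cuspidal unipotent representation only for $m=1$, and it is the trivial one. So $n_i=1$ and $\tau_i=1$ for all $i$. This gives $L\simeq G_{n_0}\times\GL_1(k)^{r}$, which is the form \eqref{eqn:Levi-form}, and therefore $r=\overline{n}$.

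\textbf{Step 2: the theta lift.} Now apply Corollary~\ref{cor:HC} with $\chi=1$. Every $\pi'\in\theta_{G'}(\pi)$ then has cuspidal support $(L',\sigma')$ with
\[L'\simeq G'_{n'_0}\times\GL_1(k)^{r'}\quad\text{and}\quad\sigma'=\theta^0(\pi_{n_0})\otimes 1^{r'},\]
where $n'_0$ is the first occurrence index of $\pi_{n_0}$ in the Witt tower of $G'$, and $r'=n'-n'_0=\overline{n}'$. If one prefers to argue directly from Theorem~\ref{thm:HC-HC}, case (1) appends trivial characters. In case (2), the factors removed are among the $\tau_i=1$, so only trivial $\GL_1$ factors remain and the same conclusion holds.

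\textbf{Step 3: unipotence of $\sigma'$.} It remains to check that $\theta^0(\pi_{n_0})$ is cuspidal unipotent. This is the main point. Cuspidality follows from the theorem of \S\ref{subsec:HCT}: the first occurrence of a cuspidal representation is cuspidal. Unipotence follows from Theorem~\ref{thm:ucuspidals}, which identifies this first occurrence with $\pi^{G'}_{\theta(c)}$ (or the corresponding $\pi^{G'}_{c',\pm}$), and in particular shows it is a singleton.

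The step I expect to be delicate is the identification of $n'_0$ with $\theta(c)$, together with the verification that the first occurrence is a singleton in every case. Theorem~\ref{thm:ucuspidals} handles symplectic–even orthogonal and unitary pairs explicitly. For the odd orthogonal and general linear cases, I would invoke the symmetry of $\Theta_{G,G'}$ and treat the $\GL$ case trivially, since there $n_0=0$.
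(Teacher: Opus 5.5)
Your Steps 1 and 2 are exactly the paper's argument. The paper's proof is the single line ``it follows from Corollary~\ref{cor:HC}''. Your Step 1 only makes explicit why a unipotent cuspidal pair has the shape \eqref{eqn:LeviMQZ} with trivial $\GL_1$-characters, so that the corollary applies with $\chi=1$. You also correctly use $\overline{n}'=n'-n'_0$, which is what the statement's $n-n'_0$ should read.

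Step 3 should be dropped, for two reasons. First, it is not needed. The statement asserts nothing about unipotence of $\sigma'$. The fact that $\theta^0(\pi_{n_0})$ is a single irreducible cuspidal representation is already the theorem in \S\ref{subsec:HCT}, which gives $\theta^0(\pi)\in\Irr_\cc$. So neither your ``delicate'' singleton check nor an identification of $n'_0$ with $\theta(c)$ is required.

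Second, Step 3 fails as you plan it. Theorem~\ref{thm:ucuspidals} only treats $(\Sp_{2n},\rO^\epsilon_{2n'})$ and unitary pairs. The symmetry of $\Theta_{G,G'}$ only exchanges the two members of one dual pair, so it cannot carry information from even to odd orthogonal towers. In fact the claim is false there. Condition (2) of Theorem~\ref{thm:reducSpOodd} forces $\nu_{-1}(s')=\nu_1(s)$. Hence if $\pi$ is unipotent on $\Sp_{2n}$ with $n\ge 1$, every $\pi'\in\theta_{G'}(\pi)$ with $G'=\rO_{2n'+1}(k)$ lies in some $\cE(\rO_{2n'+1},s',\zeta')$ with $s'\ne 1$, so it is not unipotent.

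For instance, the lift of the trivial representation of $\Sp_2(k)$ to $\rO_3(k)$ (for the appropriate discriminant) is an extension of the quadratic character of $\SO_3(k)\simeq\mathrm{PGL}_2(k)$. Its cuspidal support carries $\xi$, not $1$, on the $\GL_1$-factor. So for symplectic--odd orthogonal pairs even the $\GL_1$-part of Corollary~\ref{cor:HC} needs a $\xi$-twist. Your Step 2 inherits this limitation exactly as the paper's one-line proof does. The argument is sound as written for the pairs where theta preserves unipotence: symplectic--even orthogonal, unitary, and general linear pairs.
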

\begin{proof}
It follows from Corollary~\ref{cor:HC}.
\end{proof}

\begin{cor} \label{cor:Theta-WW}
Let $(G,G')=(G_n,G'_{n'})$ be an irreducible  reductive dual pair. The description of the theta correspondence for unipotent representations of $G$ and $G'$ reduces to the description, for every pair 
$(\fs,\fs')$ as in  Theorem~\ref{thm:HC-unip}, of a correspondence between $\Irr(W_{\overline{n}})$ and $\Irr(W_{\overline{n}'})$, where $W_m$ is a finite Weyl group of type $\rB_m$ for $m\in\{\overline{n},\overline{n}'\}$.
\end{cor}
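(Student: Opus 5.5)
The plan is to combine three ingredients already available: the partition of unipotent representations into Harish-Chandra series, the compatibility of theta with these series (Theorem~\ref{thm:HC-unip}), and the parametrization of each series by irreducible representations of the relative Weyl group (Theorem~\ref{thm:HC:W} together with Proposition~\ref{prop:B}).

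\emph{Step 1: decompose the unipotent representations.} Every unipotent irreducible $\pi$ of $G$ has cuspidal support $\fs=(L,\sigma)_G$ with $\sigma$ cuspidal unipotent. This holds because Harish-Chandra induction preserves Lusztig series, so the cuspidal support of a unipotent representation is again unipotent. By the list in \S\ref{subsec:CuspUnipt} and \eqref{eqn:Levi-form}, $L\simeq G_{n_0}\times T_{\overline{n}}$ with $\sigma=\pi_{n_0}\otimes 1^{\overline{n}}$, and $\pi_{n_0}$ is determined by the integer $c$ (and, for orthogonal groups, by a sign). Hence $\Irr^\unip(G)=\bigsqcup_\fs \Irr^\fs(G)$, the union running over these finitely many series, and the same holds for $G'$.

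\emph{Step 2: localize the Weil representation.} By Theorem~\ref{thm:HC-unip}, if $(\pi,\pi')\in\Theta_{G,G'}$ with $\pi\in\Irr^\fs(G)$ unipotent, then $\pi'\in\Irr^{\fs'}(G')$ with $\fs'$ determined by $\fs$. Here $\sigma'=\theta^0(\pi_{n_0})\otimes 1^{\overline{n}'}$, which is cuspidal unipotent by Theorem~\ref{thm:ucuspidals}, and $\overline{n}'$ is fixed by $n'$ and $n'_0$. Consequently the unipotent part of $\omega_{G,G'}$ is the direct sum of the components $\omega_{n,n'}^{\fs,\fs'}$ over the pairs $(\fs,\fs')$ related by $c\mapsto\theta(c)$. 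Knowing $\Theta_{G,G'}$ on unipotent representations is therefore equivalent to knowing, for each such pair, which $\pi\otimes\pi'$ occur in $\omega_{n,n'}^{\fs,\fs'}$.

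\emph{Step 3: transport to Weyl groups.} Apply Theorem~\ref{thm:HC:W} to get bijections $\Irr^\fs(G)\leftrightarrow\Irr(W_\sigma)$ and $\Irr^{\fs'}(G')\leftrightarrow\Irr(W_{\sigma'})$. Proposition~\ref{prop:B} identifies $W_\sigma\simeq W_{\overline{n}}$ and $W_{\sigma'}\simeq W_{\overline{n}'}$, both of type $\rB$. The relation $\Theta_{G,G'}\cap(\Irr^\fs(G)\times\Irr^{\fs'}(G'))$ then transports to a relation between $\Irr(W_{\overline{n}})$ and $\Irr(W_{\overline{n}'})$, and the union over the pairs of Step~2 recovers the whole unipotent correspondence.

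The main obstacle is the orthogonal case. Proposition~\ref{prop:B} is needed for the disconnected groups $\rO^\epsilon_{2n}$, and there one must check that $W_\sigma$ is genuinely of type $\rB$ (the relevant reference is \cite[Lemme~3.2]{AMR}) and that the cuspidal support map from Step~1 remains well defined. Specifically, the two cuspidals $\pi^G_{c,\pm}$ give distinct series, and the sign twist must be tracked on the $G'$ side. For type~II pairs the same argument applies with $W$ of type $\rA$, via the $\fS_n$ description; the corollary's wording is aimed at the type~I pairs.
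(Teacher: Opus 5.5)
Your proposal is correct and follows essentially the same route as the paper. The paper's proof is simply the combination of Theorem~\ref{thm:HC-unip} (theta sends a unipotent Harish-Chandra series $\Irr^\fs(G)$ into a single, determined series $\Irr^{\fs'}(G')$) with Theorem~\ref{thm:HC:W} and Proposition~\ref{prop:B} (each series is parametrized by $\Irr(W_\sigma)$, with $W_\sigma$ of type $\rB$), and your Steps 1--3 spell out that combination in more detail; your caveat that type~II pairs are not really covered by the statement's type-$\rB$ wording is a fair remark about the formulation rather than a gap in the argument.
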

\begin{proof}
It follows from the combination of Theorem~\ref{thm:HC-unip} with Theorem~\ref{thm:HC:W} and Proposition~\ref{prop:B}.
\end{proof}

\subsection{A correspondence between representations of Weyl groups} \label{subsec:WW}\

Let $m$ be a positive integer. A sequence $\lambda:=(\lambda_1\ge\lambda_2\ge\cdots\ge\lambda_h)$ where $\lambda_i\in\ZZ_{\ge 0}$ is called a partition of $m$ if $\lambda_1+\lambda_2+\cdots+\lambda_h=m$. 
We set $|\lambda|:=\lambda_1+\lambda_2+\cdots+\lambda_h$. The number 
\begin{equation} \label{eqn:length}
\ell(\lambda) := |\{ i | \lambda_i > 0 \}|
\end{equation} 
 is called the \textit{length} of $\lambda$. 
Let $\cP(m)$ denote the set of partitions of $m$. 
A bipartition of $m$ is a pair $\left[\begin{smallmatrix} \lambda\cr \mu\end{smallmatrix}\right]$ of partitions such that $|\lambda|+\mu|=m$.
We denote by $\cP_2(m)$ the set of bipartitions of $m$.

Let  $W_m$ be the finite Weyl group of type $\rB_m$. Let $\sgn\colon W_m\to \{\pm 1\}$ be the unique homomorphism with kernel the subgroup of $W_m$ of type $\rD_m$. The irreducible representations of  $W_m$ are parametrized by the bipartitions of $m$.
We denote by $E_{\lambda,\mu}$ the  irreducible representation of $W_m$ which corresponds to the bipartition $(\lambda,\mu)$ of $m$. 
We keep the notation of Theorem~\ref{thm:HC-unip}.

\subsubsection{The unitary case}\

\begin{thm} \label{thm:unip-UU}
Let $(G,G')=(\rU_n, \rU_{n'})$ be a unitary irreducible reductive dual pair. 
The bijection 
\[\Irr^\fs(G)\times\Irr^{\fs'}(G')\,\overset{1-1}{\longleftrightarrow}\, \Irr(W_{\overline{n}})\times \Irr(W'_{\overline{n}'})\]
identifies the representation $\omega_{n,n'}^{\fs,\fs'}$ with the representation $\Omega_{\overline{n},\overline{n}'}$ defined by:
\begin{enumerate}
\item If $c$ is odd or $c=c'=0$:
\[\Omega_{\overline{n},\overline{n}'}:=\sum_{r=0}^{\min(\overline{n},\overline{n}')}\sum_{(\lambda,\mu)\in\cP_2(r)}
\left(\Ind_{W_r\times W_{\overline{n}-r}}^{W_{\overline{n}}}E_{\lambda,\mu}\otimes 1\right)\,\otimes\,
\left(\Ind_{W_r\times W_{\overline{n}-r}}^{W_{\overline{n}}} \sgn \cdot E_{\lambda,\mu}\otimes1\right),\]
\item otherwise:
\[\Omega_{\overline{n},\overline{n}'}:=\sum_{r=0}^{\min(\overline{n},\overline{n}')}\sum_{(\lambda,\mu)\in\cP_2(r)}
\left(\Ind_{W_r\times W_{\overline{n}-r}}^{W_{\overline{n}}}E_{\lambda,\mu}\otimes\sgn\right)\,\otimes\,
\left(\Ind_{W_r\times W_{\overline{n}-r}}^{W_{\overline{n}}} \sgn \cdot E_{\lambda,\mu}\otimes1\right).\]
\end{enumerate}
\end{thm}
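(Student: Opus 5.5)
The proof computes the projection $\omega_{n,n'}^{\fs,\fs'}$ through its Harish-Chandra restrictions and then translates the result into the Hecke algebra, which here becomes the group algebra of $W_{\overline n}$. By Theorem~\ref{thm:HC-unip}, the cuspidal supports are $\sigma=\pi_c\otimes 1^{\overline n}$ and $\sigma'=\theta^0(\pi_c)\otimes 1^{\overline n'}$. By Theorem~\ref{thm:ucuspidals}(2), $\theta^0(\pi_c)=\pi_{c'}$ with $c'=\theta(c)$. We then consider
\[\rr^{G}_{L}\otimes\rr^{G'}_{L'}\bigl(\omega_{G,G'}\bigr),\]
and compute it by the finite-field analogue of Kudla's filtration. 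That is, we restrict the Weil representation to a parabolic $P_r=\GL_r(k)\ltimes U$ stabilizing an isotropic subspace. The Jacquet module of $\omega_{G_n,G'_{n'}}$ then has a filtration whose subquotients are
\[\Ind\bigl(\text{(regular representation of }\GL_t(k))\otimes\omega_{G_{n-t},G'_{n'-t}}\bigr),\quad 0\le t\le \min(n,n').\]
Iterating this down to the cuspidal level and projecting onto the unipotent principal parts gives $\omega^{\fs,\fs'}_{n,n'}$ as a sum over $r$ from $0$ to $\min(\overline n,\overline n')$. In the $r$-th term, $r$ copies of $\GL_1$ are ``shared'' between $G$ and $G'$ through the regular representation of $\GL_r$. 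The remaining $\GL_1$'s are induced from trivial characters on each side, with a twist determined by a sign.

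Next, each term is moved to the Weyl group side via the Howlett--Lehrer/Lusztig isomorphism $\End_G(\ii_L^G\sigma)\simeq \CC[W_{\overline n}]$ of Theorem~\ref{thm:HC:W}. The regular representation of $\GL_r(k)$, restricted to unipotent principal series, contributes $\sum_{E\in\Irr(W_r)}E\otimes E^\vee$ over the type-$\rB_r$ group $N(T)/T$ together with the embedding $\GL_r\subset G_r$. Here $E^\vee\simeq E$, and the identification on the $G'$ side is twisted by $\sgn$. The free $\GL_1$'s, in turn, contribute the induction from $W_r\times W_{\overline n-r}$ with $1$ or $\sgn$ on the second factor. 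This gives exactly the shape of $\Omega_{\overline n,\overline n'}$. The key steps, in order, are:
\begin{enumerate}
\item the Kudla-type filtration;
\item compatibility of Hecke algebra normalizations with Harish-Chandra induction in stages, so that induction $W_r\times W_{\overline n-r}\to W_{\overline n}$ matches $\ii$;
\item determining the twists.
\end{enumerate}

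The main obstacle is step 3: fixing the characters $1$ versus $\sgn$, which depend on the parity of $c$. The Hecke algebra of $\ii_L^G(\pi_c\otimes 1)$ has generators with parameters $q^{2c+1}$ at the type-$\rB$ end node. The isomorphism with $\CC[W]$ depends on a choice of normalization, and for the pair $(c,c')$ with $c'=c\pm1$ the relative normalizations of the end nodes on the two sides differ. I would first handle $\overline n=\overline n'=1$ directly. There, $\omega$ restricted to $\rU_{m+2}\times\rU_{m'+2}$ can be computed via the $R_{\bT}$ decomposition, since all representations are uniform. This determines which of $1,\sgn$ occurs on the end-node reflection. The answer should be $\sgn$ precisely when $c$ is even and nonzero (case (2)), and trivial when $c$ is odd or $c=c'=0$ (case (1)). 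The general case then follows because the twist is multiplicative along induction in stages.
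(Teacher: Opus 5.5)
The paper gives no argument of its own for this statement; it only cites \cite[Th\'eor\`eme~3.10]{AMR}. Your outline therefore has to stand on its own. Its overall architecture is reasonable: Harish-Chandra restriction, transfer to Hecke algebras, then a separate determination of signs. It has a genuine gap, however, at the passage from the Kudla-type filtration to the $W_{\overline n}\times W'_{\overline n'}$-module $M$ attached to $\omega^{\fs,\fs'}_{n,n'}$.

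\textbf{What the filtration can and cannot give.} The filtration is a filtration of a Jacquet module $\rr_P(\omega_{G,G'})$. This is a representation of $L_P\times G'$ with $L_P\simeq\GL_r\times G_{n-r}$. Its subquotients are induced from $P_{r-t,t}\times G_{n-r}\times P'_t$, and they involve a character of $\GL_{r-t}$, the regular representation of $\GL_t$, and a smaller Weil representation in which $G$ is cut down by $r$ and $G'$ by $t$. Through the Howlett--Lehrer comparison, such data compute only the restrictions of $M$ to $W_J\times W'_{\overline n'}$ and to $W_{\overline n}\times W'_{J'}$, for proper parabolic subgroups $W_J\simeq\fS_r\times W_{\overline n-r}$ and $W'_{J'}$.

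\textbf{Why a twist is not enough.} These restrictions fix the character of $M$ at $(w,w')$ unless both $w$ and $w'$ are cuspidal. In type $\rB_m$, cuspidal means every signed cycle is negative, for example $w_0=-1$. So $M$ is determined only modulo a space of virtual characters of dimension $|\cP(\overline n)|\cdot|\cP(\overline n')|$, not modulo a choice of $1$ versus $\sgn$. Already for $\overline n=\overline n'=1$, the representations $1\otimes 1+1\otimes\sgn+\sgn\otimes 1$ and $2(1\otimes 1)+\sgn\otimes\sgn$ have identical restrictions. Hence your step ``settle rank one, then the twist is multiplicative along induction in stages'' cannot reach $\overline n\ge 2$. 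Each $W_{\overline n}$ has new cuspidal classes, and no Jacquet module sees the character values there.

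\textbf{The type-$\rB$ diagonal is asserted, not derived.} The same problem sits in your claim that the regular representation of $\GL_r(k)$ contributes a type-$\rB_r$ diagonal $\sum_{E\in\Irr(W_r)}E\otimes E^\vee$. It actually contributes $\sum_{E\in\Irr(\fS_r)}E\otimes E$. Two things are then assumed rather than proved: the upgrade to $\sum_{E\in\Irr(W_r)}E\otimes\sgn\cdot E$, and the induction $\Ind_{W_r\times W_{\overline n-r}}^{W_{\overline n}}$ on the $G$-side, where the filtration gives only a Jacquet module. Together these are precisely the content of the theorem.

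\textbf{What would close the gap.} You need an input that sees the whole character of $\omega^{\fs,\fs'}_{n,n'}$, or the Hecke-algebra action itself, not just its parabolic restrictions. In the unitary case the natural input is the fact you use only in rank one: every unipotent representation of a unitary group is uniform. So $\omega^{\unip}_{n,n'}$ is determined by its uniform projection, an explicit combination of the $R^{\bG}_{\bT_w}(1)\otimes R^{\bG'}_{\bT_{w'}}(1)$ computable from the Weil character. Transport this through Lusztig's parametrization of the $\pi_c$-series by bipartitions ($2$-cores and $2$-quotients). That gives the character on all classes of $W_{\overline n}\times W'_{\overline n'}$ at once, and the Jacquet-module computation becomes only a consistency check.
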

\begin{proof}
See \cite[Th\'eor\`eme~3.10]{AMR}.
\end{proof}

\subsubsection{The  symplectic-orthogonal case}\

In the case of symplectic-orthogonal dual pairs, the following  analogue of Theorem~\ref{thm:unip-UU} has been conjectured in \cite[Conjecture~3.11]{AMR}:
\begin{thm} \label{thm:AMRconj}
Let $(G,G')=(G_n,G_{n'})$ be a symplectic-orthogonal irreducible reductive dual pair. 
The bijection 
\[\Irr^\fs(G)\times\Irr^{\fs'}(G')\,\overset{1-1}{\longleftrightarrow}\, \Irr(W_{\overline{n}})\times \Irr(W'_{\overline{n}'})\]
identifies the representation $\omega_{n,n'}^{\fs,\fs'}$ with the representation $\Omega_{\overline{n},\overline{n}'}$ defined by:
\begin{enumerate}
\item Cases $(\Sp_{2n},\rO^+_{2n'})$ with $c$ even and $(\Sp_{2n},\rO^-_{2n'})$ with $c$ odd: 
\[\Omega_{\overline{n},\overline{n}'}:=\sum_{r=0}^{\min(\overline{n},\overline{n}')}\sum_{(\lambda,\mu)\in\cP_2(r)}
\left(\Ind_{W_r\times W_{\overline{n}-r}}^{W_{\overline{n}}}E_{\lambda,\mu}\otimes\sgn\right)\,\otimes\,
\left(\Ind_{W_r\times W_{\overline{n}-r}}^{W_{\overline{n}}} E_{\lambda,\mu}\otimes\sgn\right),\]
\item Cases $(\Sp_{2n},\rO^+_{2n'})$ with $c$ odd and $(\Sp_{2n},\rO^+_{2n'})$ with $c$ even: 
\[\Omega_{\overline{n},\overline{n}'}:=\sum_{r=0}^{\min(\overline{n},\overline{n}')}\sum_{(\lambda,\mu)\in\cP_2(r)}
\left(\Ind_{W_r\times W_{\overline{n}-r}}^{W_{\overline{n}}}E_{\lambda,\mu}\otimes 1\right)\,\otimes\,
\left(\Ind_{W_r\times W_{\overline{n}-r}}^{W_{\overline{n}}}E_{\lambda,\mu}\otimes \sgn \right).\]
\end{enumerate} 
\end{thm}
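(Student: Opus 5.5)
The plan is to follow the strategy used in \cite{AMR} for Theorem~\ref{thm:unip-UU}: compare two elements of the endomorphism algebra of a parabolically induced representation. By Theorem~\ref{thm:HC-unip}, the projection $\omega_{n,n'}^{\fs,\fs'}$ is a $G\times G'$-subrepresentation of
\[\ii_{L\times L'}^{G\times G'}(\sigma\otimes\sigma'),\qquad \sigma=\pi_{n_0}\otimes 1^{\overline n},\quad \sigma'=\theta^0(\pi_{n_0})\otimes 1^{\overline n'}.\]
Under the Howlett--Lehrer comparison (Theorems~\ref{thm:HC:W} and Proposition~\ref{prop:B}), it therefore corresponds to a module over the Hecke algebra $\mathcal H(W_{\overline n},\underline{q})\otimes\mathcal H(W_{\overline n'},\underline{q}')$. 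Once the parameters are specialised to $1$ (Tits deformation), this gives a $W_{\overline n}\times W_{\overline n'}$-module. The goal is to show that this module is $\Omega_{\overline n,\overline n'}$.

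The first step is to compute the character of the module directly from the Weil representation, using the standard mixed model. Fix a maximal isotropic flag in $V$ adapted to $L$. Kudla's filtration, computed inductively as in the proof of Theorem~\ref{thm:HC-HC}, shows that the Jacquet module $\rr^{G\times G'}_{L\times L'}(\omega_{G,G'})$ has a filtration indexed by $0\le r\le\min(\overline n,\overline n')$. The $r$-th graded piece is induced from the Levi $\GL_1^r$ embedded diagonally in both groups, tensored with the regular representation of $\GL_1(k)^r$ twisted by $\xi$ (the quadratic character introduced earlier), and then tensored with the first-occurrence piece $\pi_{n_0}\otimes\theta^0(\pi_{n_0})$.

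Projecting to the unipotent (trivial-character) part and reading off the relative Weyl group actions gives the following. A summand $\Ind_{W_r\times W_{\overline n-r}}^{W_{\overline n}}(\cdot\otimes\chi)\otimes\Ind(\cdot\otimes\chi')$ appears, with $W_r$ acting diagonally through the regular representation $\sum_{(\lambda,\mu)\in\cP_2(r)}E_{\lambda,\mu}\otimes E_{\lambda,\mu}$. Two signs remain to be determined:
\begin{itemize}
\item the sign $\chi,\chi'\in\{1,\sgn\}$ on the complementary factors $W_{\overline n-r}$;
\item whether the diagonal $W_r$ is twisted by $\sgn$.
\end{itemize}
Both are governed by how the simple reflection of type $\rB$ at the cuspidal end acts. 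That action in turn depends on the eigenvalue of the Hecke generator $T_{s_0}$ on $\sigma$ (parameter $q^{2c+1}$ versus $-1$, etc.), compared with the corresponding eigenvalue for $\theta^0(\pi_{n_0})$. By Theorem~\ref{thm:ucuspidals}(1c), the relevant parity is whether $c'=c$ or $c'=c+1$, which is exactly what separates cases (1) and (2) of the statement.

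The main obstacle is the non-uniqueness of the normalisation of the Howlett--Lehrer isomorphism at the type-$\rB$ end node. The multiplicity-one computations above determine $\omega^{\fs,\fs'}$ only up to the automorphisms of $\Irr(W)$ given by tensoring with sign-type characters, and the quadratic-character twist in $\omega_{G,G'}$ from \eqref{eqn:omegaGGdef} interacts with this ambiguity. The plan is to pin the normalisation down by computing two things:
\begin{enumerate}
\item the stable-range case, where the Weil representation restricted to the principal-type part is an explicit permutation module, using Corollary~\ref{cor:AKP2}-type results for the $\GL$ blocks;
\item the first-occurrence case $\overline n'=0$, via Theorem~\ref{thm:ucuspidals}.
\end{enumerate}
These two calculations fix the signs. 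Compatibility with parabolic induction, through Theorem~\ref{thm:HC-HC} and induction on $\overline n$, then propagates the identification to all $(\overline n,\overline n')$. The remaining difficulty, which is where the independent proofs of Pan and Ma--Qiu--Zou are needed, is to check that the sign twist on the orthogonal side is consistent with the choice of $\pi^{G'}_{c',\pm}$. For this step one would use the conservation relation and the $\sgn$-twisting behaviour recorded in Notation~\ref{notation:Epi}.
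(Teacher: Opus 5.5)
Your outline does not yet prove the theorem. The paper gives no argument of its own here (it cites \cite{PanAJM} and \cite{MQZ}). In your proposal the steps that carry the content are asserted rather than derived, and the last one you explicitly hand back to Pan and Ma--Qiu--Zou.

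\textbf{The missing step: the two signs.} Kudla's filtration computes Jacquet modules. Via the Howlett--Lehrer comparison, these are only the restrictions of the $W_{\overline n}\times W_{\overline n'}$-module $\Hom_{G\times G'}(\ii_{L\times L'}^{G\times G'}(\sigma\otimes\sigma'),\omega)$ to subgroups $W_M\times W_{\overline n'}$ and $W_{\overline n}\times W_{M'}$ attached to proper Levi subgroups.
\begin{itemize}
\item Jacquet modules never see the end-node intertwining operators, and such restrictions do not determine the module. So the induced shape $\bigoplus_r\Ind(\cdot)\otimes\Ind(\cdot)$ with $W_r$ acting diagonally is asserted, not derived.
\item Already for $\overline n=\overline n'=1$, the untwisted diagonal $1\otimes 1+\sgn\otimes\sgn$ and the twisted one $1\otimes\sgn+\sgn\otimes 1$ differ by $(1-\sgn)\otimes(1-\sgn)$. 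This virtual character vanishes on every proper parabolic subgroup of $W_1\times W_1$.
\item The Hecke parameters do not decide it either. $q^{2c+1}$ and $q^{2c'}$ describe the two algebras separately, not how $\omega$ intertwines them. For $c'=0$ the orthogonal end-node parameter is $q^0=1$, so even labelling its two eigenvalues is a convention, tied to $\pi_{\Lambda^t}=\sgn\cdot\pi_\Lambda$.
\item Observing that $c'=c$ versus $c'=c+1$ separates the cases only restates the case division. What is needed is an actual computation of the end-node operators on the rank-one space for $G_{n_0+1}\times G'_{n'_0+1}$ in an explicit model.
\end{itemize}
The same gap affects your normalisation steps. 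Theorem~\ref{thm:ucuspidals} only covers $\overline n=\overline n'=0$, so it does not say which member of $\Irr^{\fs}(G)$ lifts to the cuspidal $\theta^0(\pi_{n_0})$ when $\overline n'=0<\overline n$. Theorem~\ref{thm:HC-HC} transports cuspidal supports, not module structures, so it cannot propagate an identification by induction on $\overline n$.

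\textbf{Why this is not a formality: case (1) as printed cannot hold.} With an untwisted diagonal and the same character $\sgn$ on both complements, Frobenius reciprocity and $\sgn|_{W_r\times W_{\overline n-r}}=\sgn\otimes\sgn$ give $\sgn\otimes\sgn$ multiplicity $\min(\overline n,\overline n')+1$ in $\Omega_{\overline n,\overline n'}$. But $\omega^{\fs,\fs'}$ is multiplicity-free by Theorem~\ref{thm:main}.

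Concretely, take $(\Sp_2,\rO_2^+)$ with $c=0$. In the Schr\"odinger model $\CC[V']$, with the $\rO_2^+$-action geometric, one finds $\omega^{\fs,\fs'}=1\otimes 1+\mathrm{St}\otimes 1+1\otimes\det$. The $\Sp_2$-invariant vectors are $1_{\ell_1}\pm 1_{\ell_2}$ for the two isotropic lines $\ell_1,\ell_2$ of $V'$. The other normalisation of $\omega$ only twists the orthogonal side by $\det$. The printed formula instead gives $1\otimes 1+2\,\sgn\otimes\sgn$.

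The statement also has evident typos: the second induction should go to $W_{\overline n'}$, and $\rO^+$ appears twice in case (2). A proof must therefore first fix the Howlett--Lehrer normalisations on both sides and then compute the end-node signs. Depending on those normalisations one should get either a $\sgn$-twisted diagonal or two different characters on the complements, as in case (2). What is excluded is the printed case (1).
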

\begin{proof}
See \cite[Theorem 1.8 and Proposition 3.9]{PanAJM} or \cite[Theorem 1.4]{MQZ}.
\end{proof}

\section{Combinatorial description} \label{sec:combi}

\subsection{Unitary groups} \label{sec:unitary}\

We  define another order on partitions as follows: for $\lambda$, $\lambda'$ partitions of possibly different integers, we write
\begin{equation}\lambda\preceq\lambda'\quad\text{if and only if}\quad \lambda'_{i+1}\le\lambda_i\le\lambda_i' ,\text{ for all $i\in\NN$.}
\end{equation}
In other words, $\lambda\preceq\lambda'$ if the Young diagram of $\lambda$ is contained in the one of $\lambda'$ and that we can go from the first to the second by adding at most one box per column. 

\begin{example} {\rm 
The partitions $\lambda= (4,1,1)$ of $6$, and $\lambda'= (4,4,1,1)$ of $10$ verify $\lambda\preceq\lambda'$. }
\end{example}

\begin{remark} {\rm Two partitions $\lambda=(\lambda_1\ge\lambda_2\ge\cdots\ge\lambda_h)$  and $\lambda'=(\lambda_1\ge\lambda_2\ge\cdots\ge\lambda_{h})$  satisfy $\lambda\preceq\lambda'$ and $|\lambda|=|\lambda'|$, then we have $\lambda=\lambda'$. }
\end{remark}

The \textit{Young diagram} of  a partition  $\lambda$ is defined to be the set of points $(i, j) \in\NN \times\NN$ such that $1 \le j \le \lambda_i$.
To each point $(i, j)$ in the Young diagram of$\lambda$ we associate the \textit{hook} at $(i,j)$, which the set
\begin{equation}
\xi=\xi_{i,j} :=\left\{(i,j')\,:\, j\le j'\le  \lambda_i\right\}\,\cup\,\left\{(i',j)\,:\,i\le i'\le \lambda^\ast_i \right\},
\end{equation}  
The cardinality 
$|\xi|=\lambda_i +\lambda_j^\ast -i-j+1$
is called the \textit{hook-length} of $\xi$. A hook of length $2$ is also  a \textit{$2$-hook}. A hook 
$\xi_{i',j'}$ of $\lambda$ is said to be 
\begin{itemize}
\item \textit{above} $\xi_{i,j}$ if we have $j'=j$ and $i'<i$
\item \textit{left to}  $\xi_{i,j}$  if $i'=i$ and $j'<j$.
\end{itemize}
Let  $\lambda=[\lambda_1,\ldots,\lambda_m]$ and $\lambda'=[\lambda'_1,\ldots,\lambda'_m]$ be
two partitions of $n$ such that either
\begin{itemize}
\item there exists an index $i_0$ such that $\lambda_{i_0}'=\lambda_{i_0}+2$, and $\lambda_i'=\lambda_i$ for $i\ne i_0$;
\item
or  there exists an index $i_0$ such that $\lambda_{i_0}'=\lambda_{i_0}+1$,  $\lambda_{i_0+1}'=\lambda_{i_0+1}+1$, and
$\lambda_i'=\lambda_i$ for $i\ne i_0,i_0+1 $,
\end{itemize}
then we said that $\lambda'$ is obtained from $\lambda$ by adding a $2$-hook or $\lambda$ is obtained from $\lambda'$ by removing a $2$-hook.

Let $\lambda$ be a partition of $n$. After removing all possible $2$-hooks step by step, the resulting partition is denoted by $\lambda_\infty$ and called the \textit{$2$-core} of $\lambda$. 
We observe that $\lambda_\infty= [d,d-1, \ldots, 1]$ for some non-negative integer $d$. A partition $\lambda$  is said to be \textit{cuspidal} if $\lambda=\lambda_\infty$.

\begin{defn} A $\beta$-set $X = \left\{x_1,\ldots,x_m\right\}$ is a finite set of non-negative integers with elements written in increasing order, i.e., $x_1 < x_2 <\cdots<  x_m$. We define an equivalence relation (denoted by “$\sim$”) on $\beta$-sets generated by
\[X \sim \{0\}\,\cup\,\left\{ x + 1 |\,:\,x \in X\right\} .\]
\end{defn}
The \textit{rank} of $X$ is 
\[\rank(X): = \sum_{x\in X}x-\binom{|X|}{2}.\]
We have $\rank(X')=\rank(X)$ if $X'\sim X$, and the mapping
\begin{equation}
 \left\{x_1, x_2 ,\ldots,x_m\right\} \mapsto [x_m,x_{m-1}-1,\ldots, x_2-(m-2),x_1- (m -1)]
\end{equation}
defines a bijection from set of equivalence classes of $\beta$-sets of rank $n$ onto the set $\cP(n)$.

Conversely, we associate to every partition $\lambda$  the  $\beta$-set $X_\lambda$ defined by
\begin{equation} \label{eqn:b-set}
X_\lambda:=
\begin{cases}
\{\lambda_h,\lambda_{h-1}+1,\ldots,\lambda_2+(h-2),\lambda_1+h\}, &\text{if $\ell(\lambda)+\ell(\lambda_\infty)$ is even;}\cr
\{0,\lambda_h+1,\lambda_{h-1}+2,\ldots,\lambda_2+(h-1), \lambda_1+h\}, &\text{otherwise.}
\end{cases}
\end{equation}
It is clear that the rank of $X_\lambda$ is equal to $|\lambda|$. Moreover, $X_{\lambda'}\sim X_\lambda$ if $\lambda'$ is obtained
from $\lambda$ by adding several $0$’s.

We set
\begin{equation}
X_\lambda^0:=\{x\in X_\lambda\,:\,x\equiv 0 \text{ ($\modu 2$)}\}
\quad\text{and}\quad
X_\lambda^1:=\{x\in X_\lambda\,:\,x\equiv 1 \text{ ($\modu 2$)}\}  .
\end{equation}

\begin{defn} {\rm 
A \textit{symbol} is an array $\Lambda$ of the form 
\[ \Lambda=\left(\begin{matrix}  A\cr B\end{matrix}\right)=\left(\begin{matrix} a_1&a_2&\ldots&a_{r}\cr 
b_1&b_2&\ldots&b_{s}\end{matrix}\right)\]
of non-negative integers satisfying
 $a_i<a_{i+1}$ and $b_i<b_{i+1}$ for every $i$.}
\end{defn}
The \textit{rank}  of $\Lambda$ is defined to be
\[\rank(\Lambda):=\sum_{i=1}^r a_i+ \sum_{i=1}^s b_i-\left\lfloor\left(\frac{r+s-1}{2}\right)^2\right\rfloor.\]
The \textit{defect} of $\Lambda$ is defined as
\[\defect(\Lambda):=r-s.\]
A symbol $\Lambda$ of defect $1$  is said to be \textit{distinguished} if
\[a_1\le b_1\le a_2\le b_2\le\cdots\le a_{r-1}\le b_{r-1}\le a_{r}.\]
Similarly, a symbol $\Lambda$ of defect $0$ is said to be \textit{distinguished} if
\[a_1\le b_1\le a_2\le b_2\le\cdots\le b_{r-1}\le a_r\le b_r.\]
There is an equivalence relation on symbols generated by the shift
\begin{equation}
\left(\begin{matrix}  A\cr B\end{matrix}\right) \sim \left(\begin{matrix} \{0\}\cup\{A+1\}\cr  \{0\}\cup\{B+1\}\end{matrix}\right).
\end{equation}
The functions $\rank(\cdot)$ and $\defect(\cdot)$ are invariant under the shift operation, hence are well-defined on the set
of symbol classes.
We denote by $\cS_{n}$ the set of symbol classes of rank $n$ and by $\cS_{n,d}$ the set of symbols classes of rank $n$ and defect $d$.
Each symbol class contains a unique symbol $\Lambda$ such that $0\notin A\cap B$. Such a symbol is called \textit{reduced}. From now on, unless otherwise specified, a symbol will always assumed to be reduced.

For a symbol $\Lambda=\binom{A}{B}$, we denote by $\Lambda^\ast$ (resp.~$\Lambda_\ast$) the first row (resp.~second row)
of $\Lambda$, that is,  $\Lambda^\ast:=A$ and $\Lambda_\ast:=B$. 

We associate to $\lambda\in\cP(n)$  the symbol  $\Lambda_\lambda$ defined by
\begin{equation} \label{eqn:symbol-part}
\Lambda_\lambda:=
\begin{cases}
\left(\begin{smallmatrix}X^1_\lambda\cr
X^0_\lambda\end{smallmatrix}\right) & \text{if $\ell(\lambda_\infty)$ is even},\\
\\
\left(\begin{smallmatrix}X^0_\lambda\cr
X^1_\lambda\end{smallmatrix}\right) & \text{if $\ell(\lambda_\infty)$ is odd}.
\end{cases}
\end{equation}
By \cite[Lemma~2.6]{PanUU}, we have
\begin{equation}
\defect(\Lambda_\lambda)=\begin{cases}
\phantom{-}\ell(\lambda_\infty)& \text{if $\ell(\lambda_\infty)$ is even};\\
-\ell(\lambda_\infty)& \text{if $\ell(\lambda_\infty)$ is odd}.
\end{cases}
\end{equation}

Let $\Upsilon \colon \cS_n\to\cP_2(n)$ be the map defined by
\begin{equation} \label{eqn:Upsilon}
\left(\begin{matrix} a_1&a_2&\ldots&a_{r}\cr 
b_1&b_2&\ldots&b_{s}\end{matrix}\right)\mapsto
\left[\begin{matrix} a_{r}-(r-1)&a_{r-1}-(r-2)&\ldots&a_2-1&a_1\cr 
b_{s}-(s-1)&b_{s-1}-(s-2)&\ldots &b_2-1&b_1\end{matrix}\right].
\end{equation}
We set
\begin{align*}
\cS^+ &=\{\,(\Lambda,\Lambda')\in\cS\times\cS
\mid\Upsilon(\Lambda_*)^t\preccurlyeq\Upsilon(\Lambda'^*)^t,\ \Upsilon(\Lambda'_*)^t \preccurlyeq\Upsilon(\Lambda^*)^t\,\};\\
\cB^- &=\{\,(\Lambda,\Lambda')\in\cS\times\cS
\mid\Upsilon(\Lambda^*)^t\preccurlyeq\Upsilon(\Lambda'_*)^t,\ \Upsilon(\Lambda'^*)^t\preccurlyeq\Upsilon(\Lambda_*)^t\,\};\\
\cB^+_{\rU,\rU} &=\left\{\,(\Lambda,\Lambda')\in\cB^+\mid
{\rm def}(\Lambda')=\begin{cases}
0& \text{if ${\rm def}(\Lambda)=0$};\\
-{\rm def}(\Lambda)+1 & \text{if ${\rm def}(\Lambda)\neq 0$}
\end{cases}\,\right\};\\
\cB^-_{\rU,\rU} &=\left\{\,(\Lambda,\Lambda')\in\cB^-\mid
{\rm def}(\Lambda')=-{\rm def}(\Lambda)-1\,\right\},\\
\cB_{\rU_{\vphantom{n'}n},\rU_{n'}} &=
\begin{cases}
\{(\Lambda_\lambda,\Lambda_{\lambda'})\in\cB^+_{\rU,\rU}\mid |\lambda|=n,\ |\lambda'|=n'\,\}, & \text{if $n+n'$ is even};\\
\{(\Lambda_\lambda,\Lambda_{\lambda'})\in\cB^-_{\rU,\rU}\mid|\lambda|=n,\ |\lambda'|=n'\,\}, & \text{if $n+n'$ is odd}.
\end{cases}
\end{align*}

For $(G,G')$ a reductive dual pair,  the unipotent part of  the restriction to $G\cdot G'$ of the Weil representation $\omega_{G,G'}$, which was defined in \eqref{eqn:omegaGG}, is
\begin{equation}
\omega_{n,n'}^{\unip}:=\sum_{\pi\in\Irr^{\unip}(G)\atop\pi'\in\Irr^{\unip}(G')}\mult_{\pi,\pi'}\,\pi\otimes\pi'.
\end{equation}

Let $F'(x): =F({}^tx^{-1})$, where $F$ is the split Frobenius endomorphism which maps the matrix entries to their $q$th power and ${}^tg$ denotes the transposed matrix of $g$. 
The group $\GL_n^{F'}$ is the group of unitary transformations of $(\FF_{q^2})^n$. The group $\GL_n$ with the Frobenius endomorphism $F'$ is called the unitary group, denoted by $\rU_n$.
The irreducible unipotent representations of $\rU_n(k)$ are in bijection with $\cP(n)$ (see \cite[Corollary~2.4]{LS} or \cite[Theorem~11.7.2]{DMbook}). We denote by $\pi_\lambda$ the  irreducible unipotent representation which is associated to $\lambda\in\cP(n)$.
We recall the description of $\pi_\lambda$:
\begin{equation}
\pi_\lambda=\pm \sum_{\chi\in\Irr(\fS_n)}\chi(ww_0)\,R_\chi,
\end{equation}
where $w_0\in\fS_n$  is the permutation $(1, 2, \ldots, n) \mapsto (n, n -1, \ldots,1)$, and $R_\chi$ is defined in \eqref{eqn:Rf}.

The following proposition is reformulation of \cite[Th\'eor\`eme~3.10]{AMR}.
\begin{prop}
Let $(\rU_n(k),\rU_{n'}(k))$ be a unitary irreducible reductive dual pair. 

We have \[
\omega_{n,n'}^{\unip}=\sum_{(\Lambda_\lambda,\Lambda_{\lambda'})\in\cB_{\rU_{n},\rU_{n'}}}\pi_\lambda\otimes\pi_{\lambda'}.
\]
\end{prop}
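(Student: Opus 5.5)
The plan is to deduce the statement from Theorem~\ref{thm:unip-UU} (that is, \cite[Th\'eor\`eme~3.10]{AMR}), working one pair of unipotent Harish-Chandra series at a time and then translating the Weyl group combinatorics into symbols.

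\textbf{Step 1: reduction to series.} By Theorem~\ref{thm:HC-unip} and Theorem~\ref{thm:ucuspidals}(2), $\omega^\unip_{n,n'}$ decomposes as a sum of $\omega^{\fs,\fs'}_{n,n'}$. Here $\fs$ runs over the unipotent cuspidal data of $\rU_n$, and these are indexed by $c$ with $n-\overline{n}=(c^2+c)/2$. The datum $\fs'$ is then forced: it corresponds to $c'=\theta(c)$, and $\overline{n}'$ is determined by $n'$. Only pairs with the right parity of $\frac{c'(c'+1)}2$ occur, which matches the dichotomy $n+n'$ even or odd in the definition of $\cB_{\rU_n,\rU_{n'}}$.

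\textbf{Step 2: matching the parametrizations.} Inside a series, the unipotent representation $\pi_\lambda$ lies in the series of $c$ exactly when the $2$-core is $\lambda_\infty=[c,c-1,\ldots,1]$, so that $\ell(\lambda_\infty)=c$. Its Harish-Chandra parameter in $\Irr(W_{\overline{n}})$ is the $2$-quotient of $\lambda$. Via \eqref{eqn:symbol-part} and \eqref{eqn:Upsilon}, this $2$-quotient is the bipartition $\Upsilon(\Lambda_\lambda)$, up to a transpose or swap depending on the convention (Fong--Srinivasan, Lusztig). The defect is $\pm c$ by \cite[Lemma~2.6]{PanUU}, and the rule $c'=c\pm1$ becomes the defect rule in $\cB^\pm_{\rU,\rU}$. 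I would fix the exact normalization of this bijection, including any $\sgn$-twist, by checking small cases.

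\textbf{Step 3: multiplicities in $\Omega_{\overline{n},\overline{n}'}$.} This is the main obstacle. I would compute the decomposition of
\[
\Ind_{W_r\times W_{\overline{n}-r}}^{W_{\overline{n}}}\bigl(E_{\lambda,\mu}\otimes 1\bigr)\otimes\Ind\bigl(\sgn\cdot E_{\lambda,\mu}\otimes 1\bigr)
\]
summed over $r$ and $(\lambda,\mu)$. By the Pieri rule for type $\rB$, inducing $E_{\lambda,\mu}\otimes 1$ (resp.\ $\otimes\sgn$) adds a horizontal strip to $\lambda$ (resp.\ a vertical strip to $\mu$). Tensoring with $\sgn$ sends $E_{\lambda,\mu}$ to $E_{{}^t\mu,{}^t\lambda}$.

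Summing over the intermediate bipartition, the pair $E_{\alpha,\beta}\otimes E_{\alpha',\beta'}$ occurs if and only if there is $(\lambda,\mu)$ such that
\begin{itemize}
\item $\alpha/\lambda$ is a horizontal strip and $\beta=\mu$;
\item $\alpha'={}^t\mu$ and $\beta'/{}^t\lambda$ is a horizontal strip (in the other case the roles are twisted by $\sgn$).
\end{itemize}
After transposing, these conditions are exactly the relations $\preceq$ between transposed rows defining $\cB^\pm$. The choice of $\lambda$ is then determined, so the multiplicity is $1$. This matches the multiplicity-one sum in the statement.

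Finally, assembling the series recovers the displayed formula for $\omega^\unip_{n,n'}$. Care is needed in two places: keeping track of which row ($\Lambda^\ast$ or $\Lambda_\ast$) corresponds to which partition for odd versus even $c$, and matching the case distinction of Theorem~\ref{thm:unip-UU}(1),(2) with $\cB^+$ versus $\cB^-$.
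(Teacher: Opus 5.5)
Your route is the paper's own. The paper simply cites \cite[Proposition~3.1]{PanUU}, which rewrites Theorem~\ref{thm:unip-UU} using the $2$-quotient labelling of the Harish-Chandra series and a Pieri-rule computation. However, Step~3, which carries all the content, does not go through as written.

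\textbf{Case (1) bookkeeping.} Your conditions contradict your own Pieri rules. Since $\sgn\cdot E_{\lambda,\mu}=E_{{}^t\mu,{}^t\lambda}$, and inducing with $1$ adds a horizontal strip to the \emph{first} component, the constituents $E_{\alpha',\beta'}$ of $\Ind(\sgn\cdot E_{\lambda,\mu}\otimes 1)$ satisfy $\beta'={}^t\lambda$ with $\alpha'/{}^t\mu$ a horizontal strip. You wrote instead $\alpha'={}^t\mu$ with $\beta'/{}^t\lambda$ a horizontal strip. With your version, $\mu$ is pinned twice while $\lambda$ is free, so the multiplicity-one conclusion fails. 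For instance, with $\overline n=\overline n'=1$ the pair $E_{(1),\emptyset}\otimes E_{\emptyset,(1)}$ would arise both from $(\lambda,\mu)=(\emptyset,\emptyset)$ and from $(\lambda,\mu)=((1),\emptyset)$. With the correct bookkeeping, $\mu=\beta$ and $\lambda={}^t\beta'$ are forced, the conditions become ${}^t\beta'\preceq\alpha$ and ${}^t\beta\preceq\alpha'$, and Pieri then gives multiplicity one.

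\textbf{Case (2).} The ``matching'' you leave to care is where the proof actually lives, and it cannot be carried out from Theorem~\ref{thm:unip-UU} in the form recalled here. Expand case~(2) literally with your Pieri rules. The first factor leaves $\lambda$ fixed and the second leaves ${}^t\lambda$ fixed, so $\lambda$ is over-determined and $\mu$ is not determined. Already for $\overline n=\overline n'=1$ this gives $2\,\sgn\otimes 1+1\otimes\sgn$. That is not multiplicity-free, so no ``twist by $\sgn$'' turns it into a set of $\cB^\pm$ type.

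\textbf{Matching with $\cB^\pm$.} The split (1)/(2) is by the parity of $c$, whereas the split $\cB^+_{\rU,\rU}/\cB^-_{\rU,\rU}$ is by the parity of $n+n'$. For fixed $c$, that means it depends on whether $c'=c+1$ or $c'=c-1$. For example, take the $c=1$ series of $G=\rU_3$: the partner $G'=\rU_3$ (so $c'=2$) gives $\cB^+$, while $G'=\rU_2$ (so $c'=0$) gives $\cB^-$, and both fall under case~(1).
\begin{itemize}
\item In $\cB^+$, the row $\Lambda^*$ sits on the larger side of $\preceq$ and $\Lambda_*$ on the smaller side; in $\cB^-$ it is the reverse.
\item In your corrected case~(1) conditions, the same component $\alpha$ of the label of $\pi$ is always on the larger side.
\end{itemize}
So no fixed labelling $\Irr^{\fs}(G)\leftrightarrow\Irr(W_{\overline n})$ can produce both.

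\textbf{What is needed.} To close the argument you need the exact case distinction and Hecke-algebra normalization of \cite[Th\'eor\`eme~3.10]{AMR}, as Pan uses it. You also need the precise identification of that labelling with the $2$-quotient for every $c$ (Fong--Srinivasan). Checking small cases cannot supply this, because that identification is precisely what separates $\cB^+$ from $\cB^-$.
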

\begin{proof}
See \cite[Proposition 3.1]{PanUU}.
\end{proof}

\subsection{Symplectic and orthogonal groups}\

A symbol  $\Lambda$ is said to be \textit{degenerate} if $\Lambda^\ast=\Lambda_\ast$. Clearly, degenerate symbols necessarily have defect $0$. Let $\widetilde\cS_{n,0}$ denote the set
of symbol classes of rank $n$ and defect $0$, in which each degenerate symbol class is 
repeated twice.
A symbol  $\Lambda'$ is called a \textit{subsymbol} of $\Lambda$, denoted by $\Lambda'\subset\Lambda$, if we have $(\Lambda')^\ast\subset \Lambda^\ast$ and $(\Lambda')_\ast\subset \Lambda_\ast$.
If $\Lambda'\subset\Lambda$, we define the \textit{symbol substraction} to be
\begin{equation}
\Lambda\backslash\Lambda':=\left(\begin{matrix}  \Lambda^\ast\backslash(\Lambda')^\ast\cr \Lambda_\ast\backslash(\Lambda')_\ast\end{matrix}\right).
\end{equation}
For two symbols $\Lambda$ and $\Lambda'$, we define  their \textit{union} and \textit{intersection} by
\begin{equation}
\Lambda\cup\Lambda':=\left(\begin{matrix}  \Lambda^\ast\cup(\Lambda')^\ast\cr
\Lambda_\ast\cup(\Lambda')_\ast\end{matrix}\right)\quad\text{and}\quad
\Lambda\cap\Lambda':=\left(\begin{matrix}  \Lambda^\ast\cap(\Lambda')^\ast\cr
\Lambda_\ast\cap(\Lambda')_\ast\end{matrix}\right).
\end{equation}
For a distinguished symbol $Z$ , we define its \textit{subsymbol of singles} $Z_I$ as
\begin{equation} \label{eqn:ZI}
Z_I:=Z\backslash\left(\begin{matrix} Z^\ast\cap Z_\ast\cr
Z^\ast\cap Z_\ast\cr\end{matrix}\right).
\end{equation}
For a symbol $\Lambda$, we set $\Lambda^t:=\left(\begin{matrix} \Lambda_\ast\cr  \Lambda^\ast\end{matrix}\right)$.
We have 
\[\rank(\Lambda^t)=\rank(\Lambda) \quad\text{and}\quad \defect(\Lambda^t)=-\defect(\Lambda).\]

For $M$ a subsymbol of $Z_I$, we set
\begin{equation}
\Lambda_M:=(Z\backslash M)\cup M^t,
\end{equation}
that is, $\Lambda_M$  is the symbol obtained from $Z$ by switching the row position of entries in $M$ and
keeping other entries unchanged. Note that $\Lambda_\emptyset=Z$ and $\Lambda_{Z_I}=Z^t$.

If $Z$ is a distinguished symbol of rank $n$ and defect $1$, we define 
\begin{equation}
\cS_Z^{\Sp_{2n}}:=\left\{\Lambda_M\,:\, \text{$M\subset Z_I$ and $|M|$ even}\right\}.
\end{equation}
We set
\begin{equation}
\cS^{\Sp_{2n}}:=\bigcup_{\text{$Z$ distinguished}\atop\rank(Z)=n,\defect(Z)=1}
\cS_Z^{\Sp_{2n}}.
\end{equation}
We have 
\begin{equation}
\cS^{\Sp_{2n}}=\left\{\Lambda\in\cS_n\,:\,\defect(\Lambda)\equiv 1\;(\modu \,4)\right\}.
\end{equation}

If $Z$ is a distinguished symbol of rank $n$ and defect $0$, we define 
\begin{equation}
\begin{matrix}
\cS_Z^{\rO^+_n}&:=&\left\{\Lambda_M\,:\, \text{$M\subset Z_I$ and $|M|$ even}\right\}\cr
\cS_Z^{\rO^-_n}&:=&\left\{\Lambda_M\,:\, \text{$M\subset Z_I$ and $|M|$ odd}\right\}.
\end{matrix}
\end{equation}
We set
\begin{equation}
\cS^{\rO_{2n}^\epsilon}:=\bigcup_{\text{$Z$ distinguished}\atop\rank(Z)=n,\defect(Z)=0} \cS_Z^{\rO^\epsilon_{2n}}.
\end{equation}
We have 
\[
\cS^{\rO_{2n}^+}=\left\{\Lambda\in\cS_n\,:\,\defect(\Lambda)\equiv 0\;(\modu \,4)\right\}\quad\text{and}\quad
\cS^{\rO_{2n}^-}=\left\{\Lambda\in\cS_n\,:\,\defect(\Lambda)\equiv 2\;(\modu \,4)\right\}.
\]
Let $G\in\{\Sp_{2n}(k),  \rO_{m}^\epsilon\}$. We define an addition
\begin{equation}
\Lambda_{M}+\Lambda_{M'}:=\Lambda_N\quad\text{where $N:=M\cup M'\backslash M\cap M'$.}
\end{equation}
We have $\Lambda+Z=\Lambda$ and $\Lambda+\Lambda=Z$ for every $\Lambda\in\cS_Z^G$. This gives $\cS_Z^G$ a vector space
structure over the field $\FF_2$ with two elements.
On the other hand, if $\Lambda\in\cS^{\rO_{2n}^+}$ and $\Lambda'\in\cS^{\rO_{2n}^-}$, then $\Lambda+\Lambda'\in\cS^{\rO_{2n}^-}$, and, if  $\Lambda\in\cS^{\rO_{2n}^-}$ and $\Lambda'\in\cS^{\rO_{2n}^-} $, 
then $\Lambda+\Lambda'\in\cS^{\rO_{2n}^+}$.

\begin{thm} \label{thm:Lusz-Class}
Let $G\in\{\Sp_{2n}(k), \SO_{2n+1}(k),\SO^\epsilon_{2n}, \rO_{m}^\epsilon\}$.
There is a bijection
\begin{equation} \label{eqn:Lus77Sp}
\begin{matrix}\cS^{G}&\longrightarrow&\Irr^\unip(G)\cr
\Lambda&\mapsto& \pi_\Lambda.
\end{matrix}
\end{equation}
\end{thm}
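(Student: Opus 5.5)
Lusztig's classification of unipotent characters of classical groups gives this bijection; the plan is to rebuild it from the results already established in these notes. The first step is Harish-Chandra theory. By the list in \S\ref{subsec:CuspUnipt}, every unipotent cuspidal pair $(L,\sigma)$ of $G$ has $L$ of the form \eqref{eqn:Levi-form}, with cuspidal factor $\pi_c$ on a group of rank $c^2+c$ (types $\Sp$, $\SO_{\rm odd}$) or $c^2$ (types $\SO^\pm_{2n}$, $\rO^\pm_{2n}$). By Theorem~\ref{thm:HC:W} and Proposition~\ref{prop:B}, $\Irr^\unip(G)$ is therefore in bijection with
\[\bigsqcup_{c}\Irr(W_{\overline n})\;=\;\bigsqcup_{c}\cP_2(\overline n),\]
where $c$ runs over the admissible integers for $G$ and $\overline n$ is given by \eqref{eqn:overline{n}}. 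For $\rO^\epsilon_{2n}$ there is a doubling: each series has two cuspidals $\pi_{c,\pm}$ when $c\neq0$, and in the principal series ($c=0$) $E_{\lambda,\lambda}$ splits, or equivalently $\rO$ distinguishes $\pi$ from $\pi\otimes\sgn$. This is the reason degenerate symbol classes are counted twice, as in $\widetilde\cS_{n,0}$.

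The second step is combinatorial. I would show that $\Upsilon$ of \eqref{eqn:Upsilon}, applied row by row, restricts to a bijection from the symbol classes of rank $n$ and fixed defect $d$ onto $\cP_2(n-\lfloor d^2/4\rfloor)$. Concretely, a reduced symbol with $r$ entries in the top row and $s=r-d$ in the bottom row has rank $|\Upsilon(\Lambda)|+\sum_{i<r}i+\sum_{j<s}j-\lfloor((r+s-1)/2)^2\rfloor$. A routine computation shows this equals $|\Upsilon(\Lambda)|+\lfloor d^2/4\rfloor$, independently of $r$. Writing $d=2c+1$ for $\Sp$ and $\SO_{2n+1}$ gives $\lfloor d^2/4\rfloor=c^2+c$; writing $d=2c$ for even orthogonal groups gives $c^2$. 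So the defect matches the Harish-Chandra series exactly.

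It remains to match the congruence conditions. For $\Sp_{2n}$, the condition $\defect\equiv1\pmod4$ picks out one of the two values $d=2c+1$ and $d=-(2c+1)$ for each $c\ge0$, which agrees with there being a single cuspidal $\pi_c$. For $\SO^\epsilon_{2n}$, the condition $\defect\equiv0$ or $2\pmod 4$ together with the identification $\Lambda\sim\Lambda^t$ recovers the parity rule $\epsilon=(-1)^c$. For $\rO^\epsilon_{2n}$ one keeps both $\Lambda$ and $\Lambda^t$ with $d=\pm2c$, which accounts for $\pi_{c,\pm}$. Composing the Harish-Chandra bijection with this combinatorial bijection defines $\Lambda\mapsto\pi_\Lambda$.

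The main obstacle is the orthogonal bookkeeping. One must check that $\cS^{\rO^\pm_{2n}}$, described via the $\Lambda_M$ construction and the $\FF_2$-structure on $\cS_Z$, contains each degenerate class with the right multiplicity. One must also check that the sign twist $\pi\mapsto\pi\otimes\sgn$ matches $\Lambda\mapsto\Lambda^t$, so that $\rO$ and $\SO$ restrictions are compatible with Clifford theory for $\SO\subset\rO$. I would handle this by restricting from $\rO^\epsilon_{2n}$ to $\SO^\epsilon_{2n}$, using the decomposition \eqref{eqn:Dec} with $R_\sigma$ of order at most $2$, and counting on both sides. For $\SO_{2n+1}$ and $\Sp_{2n}$ I would simply cite Lusztig's classification \cite{Lubook}, which also fixes the normalization of the bijection.
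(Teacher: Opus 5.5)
Your route differs from the paper's. The paper's proof is only a citation of \cite[Theorem~8.2]{LusInvent77} for $\Sp_{2n}$, $\SO_{2n+1}$, $\SO^\epsilon_{2n}$. What you sketch, Harish-Chandra series indexed by $|\defect|$ with $\Upsilon$ matching each series to bipartitions of $\overline n$, is essentially Lusztig's own argument. Your rank computation is correct: $\binom{r}{2}+\binom{s}{2}-\lfloor (r+s-1)^2/4\rfloor=\lfloor d^2/4\rfloor$. Note that your inputs (the list in \S\ref{subsec:CuspUnipt}, Proposition~\ref{prop:B}, Theorem~\ref{thm:HC:W}) are themselves results of Lusztig and Howlett--Lehrer. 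So this is a reconstruction, not an independent proof. Its merit is that it addresses $\rO^\epsilon_{2n}$, which the paper's citation does not. For $\Sp_{2n}$ and $\SO_{2n+1}$ your count closes.

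The even orthogonal bookkeeping, however, is wrong as written. Since what you produce is a series-by-series matching of cardinalities, this is not cosmetic.

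First, your display $\Irr^\unip(G)\leftrightarrow\bigsqcup_c\cP_2(\overline n)$ fails for $\SO^+_{2n}$. Proposition~\ref{prop:B} covers $\rO^\epsilon_{2n}$, not $\SO^\epsilon_{2n}$. The principal series ($c=0$) of $\SO^+_{2n}$ has $W_\sigma=W(\rD_n)$. Its irreducible representations are indexed by unordered pairs $\{\lambda,\mu\}$ with $\lambda\neq\mu$, plus two for each $\{\lambda,\lambda\}$.

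Second, you attribute the doubling of degenerate symbols to $\rO^\epsilon_{2n}$, but it is the other way around. For $\rO^+_{2n}$ the principal series has $W_\sigma=W(\rB_n)$, and $E_{\lambda,\lambda}$ is irreducible there; it only splits on restriction to $W(\rD_n)$. Moreover $\pi_\Lambda\otimes\sgn=\pi_{\Lambda^t}=\pi_\Lambda$ when $\Lambda$ is degenerate. Consistently, the $\Lambda_M$ description puts each degenerate $Z$ into $\cS^{\rO^+_{2n}}$ exactly once, because $Z_I=\emptyset$ forces $M=\emptyset$. Counting degenerate classes twice for $\rO$ overcounts. For example, $\rO^+_4(k)$ has $5=|\cP_2(2)|$ unipotent representations, matching the five defect-$0$ symbols of rank $2$ with $\binom{1}{1}$ taken once, not six. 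So $\widetilde\cS_{n,0}$ is not the right index set for the principal series of $\rO^+_{2n}$.

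The correct matching is as follows.
\begin{itemize}
\item For $\rO^\pm_{2n}$: take all symbols of rank $n$ with defect $\equiv 0$, resp.\ $2\pmod 4$, with no identification and degenerate ones once. For $c=0$ this matches $W(\rB_n)$; for $c>0$ the two series of $\pi_{c,\pm}$ match the defects $\pm 2c$.
\item For $\SO^\pm_{2n}$: take classes modulo $\Lambda\sim\Lambda^t$, with degenerate ones counted twice. For $c=0$ this matches $\Irr(W(\rD_n))$.
\end{itemize}
Your Clifford-theory step for $\SO\subset\rO$ then works. A non-degenerate pair $\{\Lambda,\Lambda^t\}$ gives one $\rO$-stable unipotent representation of $\SO$, with the two extensions $\pi_\Lambda$ and $\pi_{\Lambda^t}=\pi_\Lambda\otimes\sgn$. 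A degenerate $\Lambda$ gives two $\rO$-conjugate representations of $\SO$, which induce to the single $\pi_\Lambda$.
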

\begin{proof}
For $G\in\{\Sp_{2n}(k), \SO_{2n+1}(k),\SO^\epsilon_{2n}\}$, the bijection is established in \cite[Theorem~8.2]{LusInvent77}.
\end{proof}

\begin{remark}{\rm 
If $G=\rO_{2n}^\epsilon$, we have $\pi_{\Lambda^t}=\sgn\cdot\pi_\Lambda$.}
\end{remark}

\begin{defn} \label{defn:symbol-order} {\rm
The \textit{order} $\ord(\Lambda)$ of the symbol $\Lambda\in\cS^{G}$ is
\[\ord(\Lambda):=\deg(\pi_\Lambda(1)).\]
}
\end{defn}

\begin{example} {\rm 
In the parametrization from Theorem~\ref{thm:Lusz-Class} the unique cuspidal unipotent irreducible representation $\pi_c^G$ of $G=\Sp_{2(c^2+c)}(k)$  is the representation $\pi_{\Lambda_{\Sp,c}}$, where
\[\Lambda_{\Sp,c}:=\begin{cases}
\left(\begin{smallmatrix} 0&1&\cdots&2c-1&2c\cr
&&-&&\end{smallmatrix}\right)&\text{if $c$ even,}\cr
\left(\begin{smallmatrix} &&-&&\cr
0&1&\cdots&2c-1&2c
\end{smallmatrix}\right)&\text{if $c$ odd}.
\end{cases}\]
We have $\defect(\Lambda_{\Sp,c})=(-1)^c(2c+1)$.
}
\end{example}

\begin{example} {\rm The two cuspidal unipotent irreducible representations $\pi_{c,+}^G$ and $\pi_{c,-}^G$ of $G=\rO^\epsilon_{2(c^2)}(k)$  are the representation $\pi_{\Lambda_{\SO,c}}$ and $\pi_{\Lambda^t_{\SO,c}}$, 
 where
\[\Lambda_{\SO,c}:=\begin{cases}
\left(\begin{smallmatrix} 0&1&\cdots&2c-2&2c-1\cr
&&-&&\end{smallmatrix}\right)&\text{if $c$ even,}\cr
\left(\begin{smallmatrix} &&-&&\cr
0&1&\cdots&2c-2&2c-1
\end{smallmatrix}\right)&\text{if $c$ odd}.
\end{cases}\]
We have $\defect(\Lambda_{\SO,c})=(-1)^c2c$.
}
\end{example}

\subsection{From symbols to bipartitions}\

Let  $\Upsilon_{n,d}$ denote the restriction to $\cS_{n,d}$ of the map $\Upsilon$ defined in \eqref{eqn:Upsilon}. We have
\begin{equation}
\Upsilon_{n,d}\colon\cS_{n,d}\;\overset{1-1}{\longrightarrow}\;
\begin{cases}
\cP_2(n-\frac{d^2-1}{4})&\text{if $d$ odd}\cr
\cP_2(n-\frac{d^2}{4})&\text{if $d$ even}.
\end{cases}
\end{equation}
In particular, we have bijections:
\begin{equation}
\Upsilon_{n,1}\colon\cS_{n,1}\overset{1-1}{\longrightarrow}\cP_2(n) 
\quad\text{and}\quad
\Upsilon_{n,0}\colon\cS_{n,0}\overset{1-1}{\longrightarrow}\cP_2(n).
\end{equation}

\begin{example} \label{ex:Sp6} {\rm
The group $\Sp_6(k)$ has $12$ unipotent irreducible representations, which are distributed in $6$ families as follows:
\begin{itemize} 
\item $4$ singleton families:
$\left\{\binom{3}{-}\right\}$, $\left\{\binom{0\; 2}{2}\right\}$, $\left\{\binom{1\; 2}{1}\right\}$, $\left\{\binom{0\; 1\; 2\; 3}{1\; 2\; 3}\right\}$, 
\item
$2$ families of $4$ elements:
$\left\{\binom{0\;3}{1},\binom{1\; 0}{3},\binom{1\; 3}{0},\binom{0\; 1\; 3}{-}\right\}$ and $\left\{\binom{0\; 1\; 3}{1\; 2},\binom{0\; 1\; 2}{1\; 3},\binom{1\; 2\; 3}{0\; 1},\binom{0\; 1\; 2\; 3}{1}\right\}$.
\end{itemize}
}
\end{example}
\begin{example} \label{ex:O2} {\rm
The group $\rO^+_2(k)$  has $2$ unipotent irreducible representations, which belong to the family:
$\left\{\binom{1}{0},\binom{0}{1}\right\}$.
The group $\rO^-_2(k)$ has $2$ unipotent irreducible representations which belong to the family:
$\bigl\{\binom{-}{0\; 1},\binom{0\; 1}{-}\bigr\}$.
}
\end{example}

\subsection{Explicit description of the correspondence for unipotent representations}\

We write
\begin{equation} \cS_{n,n'}^{\epsilon}:=\cS^{\Sp_{2n}}\times\cS^{\rO^\epsilon_{2n'}}.
\end{equation}
For $(\Lambda,\Lambda')\in\cS_{n,n'}^\epsilon$, we set 
$\left[\begin{smallmatrix} \lambda\vphantom{\lambda'}\cr \mu\vphantom{\mu'}\end{smallmatrix}\right]:=\Upsilon(\Lambda)$ and
$\left[\begin{smallmatrix} \lambda'\cr \mu'\end{smallmatrix}\right]:=\Upsilon(\Lambda')$.
We define
\[
\cB(\Sp_{2n},\rO^+_{2n'}):=\left\{(\Lambda,\Lambda')\in\cS_{n,n'}^+\,:\,\mu'\preceq \lambda, \mu\preceq \lambda'\;\text{ with $\defect(\Lambda')=-\defect(\Lambda)+1$}\right\}
\]
\[
\cB(\Sp_{2n},\rO^-_{2n'}):=\left\{(\Lambda,\Lambda')\in\cS_{n,n'}^-\,:\, \lambda'\preceq \mu, \lambda\preceq \mu'\;\text{ with $\defect(\Lambda')=-\defect(\Lambda)-1$}\right\}.
\]

\begin{thm} \label{thm:main}
Let $(G,G')=(\Sp_{2n},\rO^\epsilon_{2n'})$. We have
\[\omega_{G,G'}^\unip=\sum_{(\Lambda,\Lambda')\in\cB(\Sp_{2n},\rO^\epsilon_{2n'})}\pi_\Lambda\otimes\pi_{\Lambda‘}.\]
\end{thm}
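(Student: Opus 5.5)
The plan is to deduce Theorem~\ref{thm:main} from the Weyl group description in Theorem~\ref{thm:AMRconj}, translated into symbols through Lusztig's parametrization of Theorem~\ref{thm:Lusz-Class}. First I will decompose $\omega_{G,G'}^\unip$ according to unipotent Harish-Chandra series. By Theorem~\ref{thm:HC-unip} and Theorem~\ref{thm:ucuspidals}(1)(b), the component $\omega_{n,n'}^{\fs,\fs'}$ is nonzero only when $\fs=(L,\pi_c^G\otimes 1^{\overline n})$ and $\fs'=(L',\pi^{G'}_{c',\pm}\otimes 1^{\overline n'})$ with $c'=\theta(c)$. So $\omega^\unip_{G,G'}$ is the sum over $c$ of these blocks, and the sign $\pm$ and the value $c'=c$ or $c+1$ are fixed by Theorem~\ref{thm:ucuspidals}(1)(c) together with $\epsilon$. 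On the symbol side, a Harish-Chandra series of $\Sp_{2n}$ with cuspidal $\pi_c^G$ consists of the symbols of defect $(-1)^c(2c+1)$, as in the example for $\Lambda_{\Sp,c}$. The series of $\rO^\epsilon_{2n'}$ with cuspidal $\pi^{G'}_{c',\pm}$ consists of the symbols of defect $\pm(-1)^{c'}2c'$, where transposition accounts for tensoring with $\sgn$. A direct check will show that the pair $(c,c')$ allowed by Theorem~\ref{thm:ucuspidals} is exactly the pair satisfying $\defect(\Lambda')=-\defect(\Lambda)+1$ for $\epsilon=+$ and $\defect(\Lambda')=-\defect(\Lambda)-1$ for $\epsilon=-$. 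This reduces the statement to a single block.

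Within a block, $\Upsilon$ identifies symbols of fixed defect with bipartitions of $\overline n$, respectively $\overline n'$. I will use the standard fact, going back to Lusztig, that under this identification the Howlett--Lehrer/Lusztig bijection of Theorem~\ref{thm:HC:W} sends $\pi_\Lambda$ to $E_{\Upsilon(\Lambda)}$, possibly up to a twist by $\sgn$ or by the swap $E_{\lambda,\mu}\mapsto E_{\mu,\lambda}$ that depends on the parity of $c$. I have to fix this normalization carefully, since it is exactly what converts the two cases of Theorem~\ref{thm:AMRconj} into the uniform conditions in $\cB(\Sp_{2n},\rO^\epsilon_{2n'})$.

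The combinatorial core is to compute the multiplicity of $E_{\lambda,\mu}\otimes E_{\lambda',\mu'}$ in
\[\sum_{r}\sum_{(\alpha,\beta)\in\cP_2(r)}\left(\Ind_{W_r\times W_{\overline n-r}}^{W_{\overline n}}E_{\alpha,\beta}\otimes\chi\right)\otimes\left(\Ind_{W_r\times W_{\overline n'-r}}^{W_{\overline n'}}E_{\alpha,\beta}\otimes\chi'\right),\]
with $\chi,\chi'\in\{1,\sgn\}$. Here $1$ of $W_m$ is $E_{(m),\emptyset}$ and $\sgn$ is $E_{\emptyset,(1^m)}$. By the Littlewood--Richardson rule for type $\rB$, inducing $E_{\alpha,\beta}\otimes 1$ adds a horizontal strip to $\alpha$ only, while inducing $E_{\alpha,\beta}\otimes\sgn$ adds a vertical strip to $\beta$ only, each time with multiplicity at most one. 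So in case (1) of Theorem~\ref{thm:AMRconj} the multiplicity is the number of $(\alpha,\beta)$ with $\alpha=\lambda=\lambda'$ and with $\mu,\mu'$ both obtained from $\beta$ by adding vertical strips. Case (2) works the same way, with the normalization swap pairing the rows differently. Because the answer is multiplicity-free, the sum over $(\alpha,\beta)$ has to be reorganized so that it matches the interlacing conditions $\mu'\preceq\lambda$ and $\mu\preceq\lambda'$, and the transposes built into $\Upsilon$ and the order $\preceq$ must be aligned correctly.

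\textbf{Main obstacle.} The hardest step is this last reconciliation: showing that the sum over $r$ and $(\alpha,\beta)$ collapses to multiplicity one exactly on the pairs satisfying the interlacing conditions, once the correct row assignment from the Lusztig parametrization is fixed. I would first try small cases, such as $\Sp_6$ from Example~\ref{ex:Sp6} and $\rO^\pm_2$ from Example~\ref{ex:O2}, to pin down the conventions. I would then prove the general identity by induction on $\overline n+\overline n'$ using branching from $W_m$ to $W_{m-1}$, as in Pan's proof.
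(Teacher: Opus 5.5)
Your route runs opposite to the paper's. The paper quotes \cite[Theorem~1.8]{PanAJM}: there the symbol formula is proved first, and Theorem~\ref{thm:AMRconj} is deduced from it via \cite[Proposition~3.9]{PanAJM}. Deducing Theorem~\ref{thm:main} from Theorem~\ref{thm:AMRconj} is fine in principle, but only if you take the Weyl-group statement from \cite{MQZ}; otherwise the argument is circular. Your reduction to a single Harish-Chandra block is correct, and so is the defect bookkeeping. With $\defect(\Lambda)=(-1)^c(2c+1)$, and with $\Lambda_{\SO,c'}$, $\Lambda_{\SO,c'}^t$ attached to $\pi_{c',\pm}$, the pair $(c,c')$ of Theorem~\ref{thm:ucuspidals} gives exactly $\defect(\Lambda')=-\defect(\Lambda)+1$ for $\epsilon=+$ and $\defect(\Lambda')=-\defect(\Lambda)-1$ for $\epsilon=-$.

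The gap is in the combinatorial core, and the obstacle you name cannot be overcome as you frame it.

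\textbf{Multiplicities cannot be reorganized away.} If an output pair $E_{\lambda,\mu}\otimes E_{\lambda',\mu'}$ arises from several $(\alpha,\beta)$, its multiplicity in $\Omega_{\overline n,\overline n'}$ is genuinely at least $2$. No regrouping of the sum and no relabelling of representations can lower it to $1$. The right side of Theorem~\ref{thm:main}, however, is multiplicity free. Your own count in case (1), namely the number of $\beta$ lying under both $\mu$ and $\mu'$, is already $2$ for $\mu=\mu'=(1)$. Take $(\Sp_2,\rO_2^+)$, where $c=0$ and $\overline n=\overline n'=1$. The case (1) display, read literally, yields $E_{(1),\emptyset}\otimes E_{(1),\emptyset}+2\,E_{\emptyset,(1)}\otimes E_{\emptyset,(1)}$. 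By contrast, $\cB(\Sp_2,\rO_2^+)$ consists of three distinct pairs: $(\binom{1}{-},\binom{1}{0})$, $(\binom{1}{-},\binom{0}{1})$ and $(\binom{0\,1}{1},\binom{1}{0})$. So the identity you propose to prove by induction and branching is false as set up.

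\textbf{Fix for the input formula.} You must use the exact form proved in \cite{MQZ}. Any correct form has to let $(\alpha,\beta)$ be read off from the output pair. One such arrangement has $E_{\alpha,\beta}$ in one factor and $\sgn\cdot E_{\alpha,\beta}=E_{\beta,\alpha}$ in the other, with the same outer twist, as in Theorem~\ref{thm:unip-UU}. Another has $E_{\alpha,\beta}$ in both factors with different outer twists, as in case (2).

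\textbf{Fix for $\sgn$.} The $\sgn$ of the paper is the character with kernel $W(\rD_m)$, that is $E_{\emptyset,(m)}$, not $E_{\emptyset,(1^m)}$. Hence inducing $E_{\alpha,\beta}\otimes\sgn$ adds a horizontal strip to $\beta$. That is exactly what $\preceq$ encodes (at most one box per column), and $\Upsilon$ involves no transposition. Your vertical strips give the wrong shape of condition.

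With both corrections, multiplicity one and the two interlacing conditions follow at once, with no induction.

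\textbf{What remains open.} The substantive step left is the normalization of Lusztig's bijection in each block. You must determine:
\begin{itemize}
\item whether $\pi_\Lambda$ corresponds to $E_{\Upsilon(\Lambda)}$ or to the row-swapped bipartition, depending on the sign of $\defect(\Lambda)$;
\item how $\Lambda\mapsto\Lambda^t$ corresponds to $\sgn\otimes-$ on $\rO^\epsilon_{2n'}$;
\item any global twist in the Howlett--Lehrer parametrization.
\end{itemize}
This must come from Lusztig's results, and from \cite{AMR} for the orthogonal groups, uniformly in $c$. Checking $\Sp_6$ or $\rO_2^\pm$ cannot settle it. This is precisely the content of \cite[Proposition~3.9]{PanAJM}.
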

\begin{proof}
See \cite[Theorem~1.8]{PanAJM}.
\end{proof}

Theorem~\ref{thm:main} establishes the validity of \cite[Conjecture~3.11]{AMR} (see \cite[Proposition~3.9]{PanAJM}).

\smallskip

By restricting Theorem~\ref{thm:main} to the case of principal series (see also Proposition~\ref{pro:principal-unipotent}), we immediately obtain the following result.
\begin{cor} \label{cor:ppal-unip}
Let $(G,G')=(\Sp_{2n},\rO^\epsilon_{2n'})$ and let $\omega_{G,G'}^{\unip,\prin}$ denote the restriction of $\omega_{G,G'}$ to the principal unipotent series of $G$ and $G'$.
We have
\[\omega_{G,G'}^{\unip,\prin}=\sum_{(\Lambda,\Lambda')\in\cB(\Sp_{2n},\rO^\epsilon_{2n'})^{1,0}}\pi_\Lambda\otimes\pi_{\Lambda‘},\]
where {\rm
\[\cB(\Sp_{2n},\rO^\epsilon_{2n'})^{1,0}:=\left\{(\Lambda,\Lambda')\in\cB(\Sp_{2n},\rO^\epsilon_{2n'})\,:\,\text{$\defect(\Lambda)=1$ and $\defect(\Lambda')=0$}\right\}.\]}
\end{cor}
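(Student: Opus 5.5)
The plan is to obtain Corollary~\ref{cor:ppal-unip} by restricting the identity of Theorem~\ref{thm:main} to principal series summands, using Proposition~\ref{pro:principal-unipotent} together with Theorem~\ref{thm:HC-unip} to check that this restriction is compatible with the product structure. First I will identify which symbols label unipotent representations in the principal series. By Theorem~\ref{thm:HC-unip} the principal series corresponds to the cuspidal pair $(T,1)$, i.e.\ to $c=0$ in \eqref{eqn:Levi-form}. In Lusztig's parametrization (Theorem~\ref{thm:Lusz-Class}) the Harish-Chandra series of a unipotent $\pi_\Lambda$ is read off from the defect of $\Lambda$. The cuspidal symbols $\Lambda_{\Sp,c}$ and $\Lambda_{\SO,c}$ have defects $(-1)^c(2c+1)$ and $(-1)^c2c$, respectively, and all symbols in a given series share the defect of the cuspidal one. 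Hence $\pi_\Lambda$ is in the principal series of $\Sp_{2n}(k)$ if and only if $\defect(\Lambda)=1$ (the case $c=0$). Likewise, $\pi_{\Lambda'}$ is in the principal series of $\rO^\epsilon_{2n'}(k)$ if and only if $\defect(\Lambda')=0$, which forces $\epsilon=+$. This matches the maps $\Upsilon_{n,1}$ and $\Upsilon_{n,0}$ onto $\cP_2(n)$, i.e.\ onto $\Irr(W_n)$.

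Next, write $\omega^{\unip}_{G,G'}$ as in Theorem~\ref{thm:main} and decompose it along the product of Harish-Chandra blocks $\fR^\fs(G)\otimes\fR^{\fs'}(G')$. The restriction $\omega^{\unip,\prin}_{G,G'}$ is by definition the projection onto the block with $\fs=(T,1)$ and $\fs'=(T',1)$. Since the sum in Theorem~\ref{thm:main} is multiplicity-free and indexed by irreducible tensor products $\pi_\Lambda\otimes\pi_{\Lambda'}$, this projection simply keeps the pairs with $\pi_\Lambda$ and $\pi_{\Lambda'}$ both principal. By the first step, these are exactly the pairs in $\cB(\Sp_{2n},\rO^\epsilon_{2n'})$ with $\defect(\Lambda)=1$ and $\defect(\Lambda')=0$.

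As a consistency check, the defect condition in $\cB(\Sp_{2n},\rO^+_{2n'})$, namely $\defect(\Lambda')=-\defect(\Lambda)+1$, gives $\defect(\Lambda')=0$ whenever $\defect(\Lambda)=1$. This agrees with Proposition~\ref{pro:principal-unipotent}: principal series go to principal series for $\epsilon=+$. For $\epsilon=-$ the condition gives $\defect(\Lambda')=-2$, so the set $\cB^{1,0}$ is empty, consistently with $\rO^-_{2n'}$ having no unipotent principal series.

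The main obstacle is the first step, namely justifying that Harish-Chandra series of unipotent representations are detected exactly by the defect of the symbol, in particular for the disconnected group $\rO^\epsilon_{2n'}$, where $\pi_{\Lambda^t}=\sgn\cdot\pi_\Lambda$ and defect $0$ is preserved under transposition. I would settle this via Lusztig's description of the series (cuspidal symbol with the same defect, obtained by removing hooks) and the remark on $\rO^\epsilon_{2n}$. The rest is bookkeeping.
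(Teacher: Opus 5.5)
Your proposal is correct and takes the same route as the paper. The paper obtains the corollary simply by restricting Theorem~\ref{thm:main} to the principal unipotent series, pointing to Proposition~\ref{pro:principal-unipotent}; this is exactly your identification of the principal series with defect-$1$ symbols for $\Sp_{2n}$ and defect-$0$ symbols for $\rO^\epsilon_{2n'}$, followed by projection of the sum onto that block, and your remark that $\cB(\Sp_{2n},\rO^-_{2n'})^{1,0}$ is empty makes explicit what the paper leaves implicit.
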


\section{One-to-one correspondences} \label{sec:one-to-one}\

We denote by $\Theta$ the mapping given by
\begin{equation}
\Theta\colon (G,G')\mapsto \Theta_{G,G'}\subset \Irr(G)\times\Irr(G'),
\end{equation}
where $\Theta_{G,G'}$ is defined in \eqref{eqn:thetaGG}.
If $\widetilde{\Theta}$ is another mapping $\widetilde{\Theta}\colon (G,G')\mapsto \widetilde{\Theta}_{G,G'}$ such that $\widetilde{\Theta}_{G,G'}$ is a subset of $\Theta_{G,G'}$ for every reductive dual pair $(G,G')$, then 
$\widetilde{\Theta}$ is called a \textit{subrelation} of $\Theta$.

For every subrelation $\widetilde{\Theta}$ of $\Theta$ and $\pi\in\Irr(G)$, we define
\begin{equation}
\widetilde\Theta_{G'}(\pi):=\left\{\pi'\in \Irr(G')\,:\,(\pi,\pi')\in \widetilde\Theta_{G,G'}\right\}.
\end{equation}

\begin{defn} {\rm 
A subrelation $\widetilde{\Theta}$ of $\Theta$ is \textit{one-to-one} if for every reductive dual pair $(G,G')$ and every $\pi\in\Irr(G)$, there exists at most one $\pi'\in\Irr(G)$  
such that $(\pi,\pi')\in\widetilde{\Theta}_{G,G'}$.}
\end{defn}

\subsection{The eta correspondence}\

Gurevich and Howe  proposed a candidate for a one-to-one sub-relation,  that they called the \textit{eta correspondence},  for  the dual pair $(G,G')=(\rO^\epsilon_m(k),\Sp_{2n'}(k))$, 
where $\epsilon=\pm$ and $m\le n'$, \textit{i.e.}, the dual pair $(G,G')$ is in stable range, as part of a new approach for the study of
representations of classical groups over finite and local fields described in \cite{How-rank,GH,GH2,How2}. 

Let $V$ be a $2n$-dimensional symplectic vector space over $k$, and  $V=X\oplus Y$ be a polarization of $V$. 
We denote by $\Sym^2(Y)$ the space of symmetric bilinear forms on $X\simeq Y^*$, and by $L\rtimes U$ the Levi decomposition of the Siegel parabolic subgroup of  $\Sp_{2n}(k)=\Sp(V)$:
\[L\simeq \GL(X) \quad\text{and}\quad U\simeq\Sym^2(X).\]
We set
\[\text{$\psi_B(A):=\psi(\Tr(BA))$ for every $A\in\Sym^2(X)$},\]
where $Y\overset{A}{\to} X\overset{B}{\to}Y$.
If $B$ is a form of rank $r$,  we  say that the associated character $\psi_B$ has rank $r$.

The following notion of \textit{rank}  for representations  of finite symplectic groups was introduced in \cite{GH}.
Take a representation $\pi$ of $\Sp_{2n}(k)$. The  restriction  of $\pi$ to $U$ decomposes
as a sum of characters with certain multiplicities
\[ \pi|_{N}=\sum_{B\in\Sym^2(Y)} m_B\,\psi_B.\]
Then  $\pi$ is  said to have rank $r$ if $\pi|_{N}$ contains representations of rank $r$, but of no higher rank.

\begin{thm} 
Let $G':=\rO_{2n'}(k)=\rO(V')$. 
We suppose that $n'<n$. 
\textit{
For every $\pi\in\Irr(G')$, the representation $\theta_{G',G}(\pi')$ contains a unique
irreducible constituent $\eta(\pi)$
of rank $r$, and all other constituents have rank less
than $r$
}.
\end{thm}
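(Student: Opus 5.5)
The plan is to compute the restriction of the Weil representation to the Siegel unipotent radical $U$ in a Schrödinger model, and then read off both the rank bound and the uniqueness from the $\psi_B$-eigenspaces with $B$ of maximal rank. I read the statement with $r:=\dim V'=2n'$; the hypothesis $n'<n$ is used below as $\dim V'\le\dim Y$.

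\textbf{Schrödinger model.} Take the polarization $W=V\otimes V'=(X\otimes V')\oplus(Y\otimes V')$. Realize $\omega_{G,G'}$ on functions $f$ on $X\otimes V'\simeq\Hom(Y,V')$, where the identification uses the form on $V$.
\begin{itemize}
\item The orthogonal group $\rO(V')$ acts by $(h\cdot f)(T)=f(h^{-1}T)$.
\item The Levi $L\simeq\GL(X)$ acts by $f(T)\mapsto\chi(\det a)\,|{\det a}|^{\ast}f(Ta)$, where $\chi$ is a quadratic character and $|\det a|^\ast$ a suitable normalizing factor.
\item The element $u_A\in U$, with $A\in\Sym^2(X)$, acts by multiplication by $\psi\bigl(\tfrac12\Tr(A\,T^\ast T)\bigr)$. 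Here $T^\ast T\in\Sym^2(Y)$ is the pull-back along $T\colon Y\to V'$ of the symmetric form $\langle\;,\;\rangle'$.
\end{itemize}
These are the standard formulas (cf.\ \cite{MVW,Ger}); up to the harmless twist in \eqref{eqn:omegaGGdef}, they are the ones I will use. Consequently the $\psi_B$-eigenspace of $\omega$ consists of the functions supported on the fibre
\[F_B=\{T\in\Hom(Y,V') : T^\ast T=B\}.\]
Since $\rank(T^\ast T)\le\dim V'$, every character occurring in $\omega|_U$, and hence in every $\theta(\pi')$, has rank at most $\dim V'$.

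\textbf{The fibre over a maximal-rank form.} Fix $B_0$ of rank $\dim V'$ whose nondegenerate quotient $Y/\operatorname{rad}B_0$ is isometric to $V'$; such a $B_0$ exists because $\dim Y=n>n'$. Any $T\in F_{B_0}$ has kernel equal to $\operatorname{rad}B_0$ and induces an isometry $Y/\operatorname{rad}B_0\to V'$. Hence $F_{B_0}$ is a principal homogeneous space for $\rO(V')$ acting on the left. Let $S_{B_0}\subset L$ be the stabilizer of $B_0$. It acts on $F_{B_0}$ on the right, through its surjection onto $\rO(Y/\operatorname{rad}B_0)\simeq\rO(V')$. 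Therefore the $\psi_{B_0}$-eigenspace $\omega_{B_0}=\CC[F_{B_0}]$ is, as an $\rO(V')\times S_{B_0}$-module, the two-sided regular representation of $\rO(V')$, up to a character of $S_{B_0}$:
\[\omega_{B_0}\simeq\bigoplus_{\pi'\in\Irr(\rO(V'))}\pi'\otimes(\pi'^\vee\circ p)\otimes\chi',\]
where $p\colon S_{B_0}\to\rO(V')$ is the projection above and $\chi'$ is a character. Now decompose $\omega=\bigoplus_{\pi'}\pi'\otimes\Theta(\pi')$, where $\Theta(\pi')$ is the big theta lift; its irreducible constituents are the elements of $\theta_{G',G}(\pi')$. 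Taking $\psi_{B_0}$-eigenspaces and comparing $\pi'$-isotypic parts gives
\[\Theta(\pi')_{B_0}\simeq(\pi'^\vee\circ p)\otimes\chi'.\]
This is an irreducible $S_{B_0}$-module, and it is nonzero. Every rank-$\dim V'$ form $B$ in the relevant $L$-orbit is conjugate to $B_0$, so nothing is lost by fixing $B_0$. Forms of maximal rank in the other isometry class of $V'$ have empty fibre and do not occur at all.

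\textbf{Uniqueness.} Suppose $\sigma\subset\Theta(\pi')$ is an irreducible constituent of rank $\dim V'$. Its $\psi_{B_0}$-eigenspace $\sigma_{B_0}$ is then a nonzero $S_{B_0}$-submodule of $\Theta(\pi')_{B_0}$. Taking the $\psi_{B_0}$-eigenspace is exact on $\Sp(V)$-modules, being a projection onto an $S_{B_0}U$-isotypic part. So if two distinct constituents $\sigma_1\neq\sigma_2$, or a constituent with multiplicity $\ge 2$, had rank $\dim V'$, we would get
\[\dim\Theta(\pi')_{B_0}\ge 2\dim(\pi'^\vee),\]
a contradiction. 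Hence exactly one constituent $\eta(\pi')$ of rank $\dim V'$ exists; it exists because $\Theta(\pi')_{B_0}\neq0$. All other constituents have rank strictly less than $\dim V'$.

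\textbf{Main obstacle.} The delicate point is the precise form of the $L$-action in the Schrödinger model. One must verify that $S_{B_0}$ acts on $\CC[F_{B_0}]$ by right translation twisted only by a character, and in particular that $S_{B_0}\to\rO(V')$ is indeed surjective; this uses Witt's theorem on the nondegenerate quotient. I would first check this against \cite[Ch.~2]{MVW} and Gérardin's formulas for $\omega^\flat_\psi$. After that, the remainder is the regular-representation bookkeeping carried out above.
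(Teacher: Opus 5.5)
This result is in substance the standard Gurevich--Howe argument. The paper gives no proof of its own and only cites \cite[Theorem~3.3.3]{GH}. Your steps are sound: the Schrödinger model, the fibre $F_{B_0}$ as an $\rO(V')$-torsor, the $\psi_{B_0}$-eigenspace as a two-sided regular representation, irreducibility of $\Theta(\pi')_{B_0}$ as an $S_{B_0}$-module, and exactness of the eigenspace projection.

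\textbf{The hypothesis you use is not the one you claim.} For $(\Sp_{2n}(k),\rO_{2n'}(k))$ you have $\dim V'=2n'$ and $\dim Y=n$. So $n'<n$ does \emph{not} give $\dim V'\le\dim Y$. Your sentence ``such a $B_0$ exists because $\dim Y=n>n'$'' is false as written: a form on $Y$ can have rank $2n'$ only if $2n'\le n$.

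When $n/2<n'<n$, every character of $U$ has rank at most $n<2n'$. Then no constituent of $\theta_{G',G}(\pi')$ has rank $2n'$, and the statement read with $r=2n'$ is false. Your mechanism also breaks in that range. For $B$ of the largest available rank $n$, the fibre $F_B$ is only the homogeneous space $\rO(V')/\rO(T(Y)^\perp)$, not a torsor. Then $\Theta(\pi')_B\simeq(\pi'^\vee)^{\rO(T(Y)^\perp)}$ can be reducible or zero, and your uniqueness step disappears.

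So you cannot derive the needed inequality from the printed hypothesis. You must say explicitly that the theorem is meant in the stable range $2n'\le n$, i.e.\ $\dim V'\le\dim Y$ (Definition~\ref{defn:stable-range}). This is the condition $m\le n'$ in the paragraph preceding the theorem, and it is Gurevich--Howe's hypothesis. Under it your proof goes through with $r=2n'$.

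\textbf{Minor points.}
\begin{enumerate}
\item Surjectivity of $S_{B_0}\to\rO(Y/\operatorname{rad}B_0)$ does not need Witt's theorem. Choose a complement to $\operatorname{rad}B_0$ and extend each isometry by the identity on the radical.
\item Over a finite field there is no normalizing factor $|\det a|^\ast$; the Levi acts only through a quadratic character of $\det a$.
\item Phrase the last step by irreducibility rather than a dimension count. Each $\sigma_{B_0}$ is a nonzero $S_{B_0}$-submodule of the irreducible module $\Theta(\pi')_{B_0}$, hence equals it. So two distinct rank-$2n'$ constituents, or one with multiplicity two, cannot both fit.
\end{enumerate}
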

\begin{proof}
See \cite[Theorem~3.3.3]{GH}.
\end{proof}

\subsection{Two one-to-one extensions of the eta correspondence}\

For the dual pair $(G,G')=(\rO_{2n}^\epsilon(k),\Sp_{2n'}(k))$ or $(\Sp_{2n}(k),\rO_{2n'}^\epsilon(k)(k))$, the theta correspondence induces a correspondence between $\Irr^\unip(G)$ and $\Irr^\unip(G')$.
Following \cite{AKP2} and \cite{Epequin}, we consider  the map
$\underline{\Theta}_{G'}\colon\Irr^\unip(G)\longrightarrow \Irr^\unip(G')$ defined by $\underline{\Theta}_{G'}(\pi_\Lambda):=\pi_{\Lambda'}$, where $\Lambda'\in\cS^{G'}$ is the symbol such that (for a suitable $r$)
\begin{equation}
\Upsilon(\Lambda'):=\begin{cases}
\Upsilon(\Lambda)^t\cup \left[\begin{smallmatrix}r\cr -\end{smallmatrix}\right]&\text{if $\epsilon=+$},\cr
\Upsilon(\Lambda)^t\cup \left[\begin{smallmatrix}-\cr r\end{smallmatrix}\right] ,&\text{if $\epsilon=-$}.
\end{cases}
\end{equation}
Then  $\underline{\Theta}$ is a subrelation of  the restriction of $\Theta_{G,G'}$ to unipotent representations by Theorem~\ref{thm:main}. It also clearly follows from its definition that $\underline{\Theta}$ is one-to-one.
However, the degree in $q$ of $\pi_{\Lambda'}(1)$ is not necessary maximal  in $\Theta_{G'}(\pi_\Lambda)$. 

Another one-to-one subrelation $\overline{\Theta}$ has been defined by Pan in \cite{Pan-eta-SpO, PanUU}, and extended to arbitrary irreducible representations. Recall that $\cS_{n,d}\subset \cS_G$ denotes 
the set of symbols of rank $n$ and defect $d$. Let ``$<$" be the linear order  on  $\cS_{n,d}$ defined in \cite[\S4.3]{Pan-eta-SpO} for ortho-symplectic dual pairs and in \cite[\S3.3]{PanUU} for unitary dual pairs. 
The theta correspondence defines a relation between the sets $\cS_{n,d}\subset \cS_G$ and  $\cS_{n',d'}\subset \cS_{G'}$ for certain $d,d'$, and $\overline{\Theta}\colon\Irr^\unip(G)\longrightarrow \Irr^\unip(G')$ 
is defined inductively to be the smallest element in the set of elements of maximal order in
\[\Theta_{G'}(\pi_\Lambda)\backslash\left\{\overline{\Theta}_{G'}(\pi_{\Lambda_1})\,:\,\Lambda_1<\Lambda\right\}.\]

Thanks to Theorems~\ref{thm:reducUU}, \ref{thm:reducSpOeven} and \ref{thm:reducSpOodd}, which establish the compatibility of the theta correspondence with the bijections $\tcL_G$ and $\tcL_{G'}$, we can extend 
$\underline{\Theta}$ and $\overline{\Theta}$ outside the unipotent representations.

\begin{thm}
For every dual reductive dual pair  $(G,G')=(\rO_m,\Sp_{2n'})$ such that $m\le n'$, we have 
\[\underline{\Theta}_{G'}(\pi)=\overline{\Theta}_{G'}(\pi)=\eta(\pi),\]
for any $\pi\in\Irr(G)$, that is, both $\underline{\Theta}_{G'}$ and $\overline{\Theta}_{G'}$ are extensions of the eta 
correspondence from the stable stable range to arbitrary reductive dual pairs of Type I.
\end{thm}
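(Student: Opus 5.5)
The plan is to reduce to unipotent representations and then compare three explicit descriptions inside the finite set $\Theta_{G'}(\pi)$.

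\emph{Reduction to unipotent representations.} Let $\pi\in\cE(G,s)$. Both $\underline{\Theta}$ and $\overline{\Theta}$ are extended beyond the unipotent case through $\tcL^G$ and $\tcL^{G'}$, using Theorems~\ref{thm:reducSpOeven} and \ref{thm:reducSpOodd}. These theorems force the components $\pi^\unip_{\ne}$ and $\pi^\unip_{-1}$ (or $\pi^\unip_{1}$, according to the parity case) to be transported unchanged. Only the factor at eigenvalue $1$ (resp.\ $-1$) sees a genuine theta correspondence, namely one for a smaller pair $(\rO_{m_1},\Sp_{2n'_1})$.

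I would check that the stable range condition $m\le n'$ passes to this smaller pair. Since $s_{\ne}$ and $s_{\pm1}$ have equal sizes on both sides, one has $n'_1-m_1/2\ge n'-m/2$, so it does. I would also check that the Gurevich--Howe rank is compatible with the Jordan decomposition: the rank of a constituent of $\theta(\pi)$ is read off from its restriction to the Siegel unipotent radical. The maximal rank $m$ is achieved exactly by the constituents whose unipotent $1$-part is the eta lift of $\pi^\unip_1$. I would argue this from the wave-front/degree description: the maximal rank constituent is the one of maximal degree, and by \eqref{eqn:dim tau} degrees factor through $\tcL$. With these two checks, the statement reduces to $\pi$ unipotent.

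\emph{Unipotent case.} By Theorem~\ref{thm:main}, $\Theta_{G'}(\pi_\Lambda)$ is the explicit set of $\pi_{\Lambda'}$ with $(\Lambda,\Lambda')\in\cB$. In stable range the rank of $\pi_{\Lambda'}$ is governed by the degree in $q$ of $\pi_{\Lambda'}(1)$, i.e.\ by $\ord(\Lambda')$. This uses the Gurevich--Howe result that the rank-$m$ constituent is the unique one of top dimension, so $\eta(\pi_\Lambda)$ is the unique element of maximal order. Definition~\ref{defn:symbol-order} combined with Lusztig's degree formula for symbols then lets me compute $\ord(\Lambda')$ from $\Upsilon(\Lambda')$.

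I would show directly that $\Upsilon(\Lambda')=\Upsilon(\Lambda)^t\cup[\,r\,]$, with the single row $r=n'-|\Upsilon(\Lambda)|$ being as large as possible, maximizes $\ord$. The relations $\mu'\preceq\lambda$ and $\mu\preceq\lambda'$ force the other partitions to be obtained by moving boxes out of the long first row. In stable range, $r\ge$ the first part of the partitions involved, and each such move strictly lowers the $q$-degree. This is the key inequality, and the main obstacle: it must come from the hook formula for unipotent degrees applied to symbols. I would first try to prove it by induction on the number of boxes moved, comparing $\ord$ of two symbols differing by one entry in a row.

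Uniqueness of the maximum gives $\underline{\Theta}_{G'}(\pi)=\eta(\pi)$. For $\overline{\Theta}$, I would note that in stable range the maximal-order element of $\Theta_{G'}(\pi_\Lambda)$ is unique and distinct for distinct $\Lambda$, since $\underline{\Theta}$ is injective. Hence the inductive removal of $\overline{\Theta}_{G'}(\pi_{\Lambda_1})$ for $\Lambda_1<\Lambda$ never deletes it, and the ordering tie-break is never used. So $\overline{\Theta}_{G'}(\pi)$ equals the same maximal element as well.
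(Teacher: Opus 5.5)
\paragraph{Where the proposal falls short.} Your outline leaves unproved the two steps that carry the theorem, so it is not yet a proof; the notes themselves give no argument and only cite Pan's Theorem~6.13.

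\paragraph{First gap: linking $\eta$ to the symbols.} The Gurevich--Howe theorem quoted in the notes characterizes $\eta(\pi)$ by \emph{rank}, that is, by which characters $\psi_B$ of the Siegel unipotent radical occur, and not by dimension. The claim ``the rank-$m$ constituent is the unique one of top $q$-degree'' is therefore not an input you can cite. It is essentially the $\overline{\Theta}$-half of the statement you are proving, since it is exactly what relates the order-defined $\overline{\Theta}_{G'}$ to the rank-defined $\eta$. Your reduction to unipotent representations relies on the same identification: rank is defined through restriction to $U$, and you get its compatibility with $\tcL$ only by equating it with degree. So your argument touches $\eta$ only through this unproved input.

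You need a genuine bridge between rank and the parametrization, and one is available. In the Schr\"odinger model of $\omega_{\rO(V),\Sp_{2n'}}$, the Siegel unipotent radical acts through the characters $\psi_B$ with $B$ the Gram form of an $n'$-tuple of vectors of $V$. Hence every constituent of a theta lift from an orthogonal group of dimension $<m$ has rank $<m$. Combined with the uniqueness in Gurevich--Howe, $\underline{\Theta}_{G'}(\pi)=\eta(\pi)$ follows once you show that every \emph{other} constituent of $\Theta_{G'}(\pi)$ occurs in the theta correspondence with some smaller orthogonal group. That is an occurrence question the explicit descriptions can address, namely Theorem~\ref{thm:main} together with the reduction theorems and persistence.

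\paragraph{Second gap: the key inequality.} This is the only place where symbols are actually compared, and you leave it as a plan. Your picture of the competitors is also too simple. By Theorem~\ref{thm:main}, $\Theta_{G'}(\pi_\Lambda)$ consists of all $\Lambda'$ of the prescribed defect satisfying two interlacing conditions. Each condition pairs one row of $\Upsilon(\Lambda')$ with the opposite row of $\Upsilon(\Lambda)$, so both partitions of $\Upsilon(\Lambda')$ vary and boxes can be traded between them. Moreover, $\ord(\Lambda')$ cannot be read off from one row: Lusztig's degree formula involves all entries of $\Lambda'$, for instance through $\sum_{a\in(\Lambda')^\ast,\,b\in(\Lambda')_\ast}\max(a,b)$. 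The claim that every such move strictly lowers the $q$-degree, with a \emph{unique} maximizer, is the statement to be proved, not a routine check. Your tie-break argument for $\overline{\Theta}$ is correct, but it needs exactly this uniqueness.

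\paragraph{Odd $m$.} Here the reduction does not land where you say. By Theorem~\ref{thm:reducSpOodd}, the genuine theta factor pairs the unipotent factor of $\pi$ at eigenvalue $1$ with the factor of $\pi'$ at eigenvalue $-1$. The first lives on an odd special orthogonal group and is parametrized by symbols of odd defect; the second lives on the even orthogonal group $G'_{s',-1}$. So the residual problem is not a pair $(\rO_{m_1},\Sp_{2n'_1})$. Theorem~\ref{thm:main} then has to be used in the reversed configuration, with the odd-defect side small, and your unipotent analysis does not cover that case.
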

\begin{proof}
See \cite[Theorem~6.13]{Pan-eta-SpO}.
\end{proof}

\begin{defn} {\rm 
A subrelation $\widetilde{\Theta}$ of $\Theta$ is \textit{semi-persistent} (on unipotent representations) if it satisfies the following conditions:
\begin{enumerate}
\item if either 
\begin{enumerate} 
\item $(G,G')=(\Sp_{2n}(k),\rO_{2n'}^+(k))$ and $\Lambda\in\cS_G$ with $\defect(\Lambda)=4d+1$; or
\item $(G,G')=(\rO_{2n'}^+(k),\Sp_{2n}(k))$ and $\Lambda\in\cS_G$ with $\defect(\Lambda)=4d$,
\end{enumerate}
then $\widetilde\Theta_{G'}(\pi_\Lambda)\ne\emptyset$ for any $n'\ge n-2d$;
\item if either 
\begin{enumerate} 
\item $(G,G')=(\Sp_{2n}(k),\rO_{2n'}^-(k))$ and $\Lambda\in\cS_G$ with $\defect(\Lambda)=4d+1$; or
\item $(G,G')=(\rO_{2n'}^-(k),\Sp_{2n}(k))$ and $\Lambda\in\cS_G$ with $\defect(\Lambda)=4d+2$,
\end{enumerate}
then $\widetilde\Theta_{G'}(\pi_\Lambda)\ne\emptyset$ for any $n'\ge n+2d+1$;
\item  $(G,G')=(\rU_n(k),\rU_{n'}(k)$ and if either
\begin{enumerate} 
\item  $n + n'+ \ell(\lambda_\infty)$ is even and $n'\ge n - \ell(\lambda_\infty)$; or
\item  $n + n' +\ell(\lambda_\infty)$ is odd and $n'\ge  n + \ell(\lambda_\infty)+1$,
\end{enumerate}
then $\widetilde\Theta_{G'}(\pi_\lambda)\ne\emptyset$.
\end{enumerate}
}
\end{defn}

\begin{defn} \

{\rm 
\begin{enumerate}
\item 
A subrelation $\widetilde{\Theta}$ of $\Theta$ is \textit{symmetric} if for each reductive dual pair $(G,G')$, any $(\pi,\pi')\in\Irr(G)\times\Irr(G')$,
we have $\pi'\in \widetilde\Theta_{G'}(\pi)$ if and only if $\pi\in \widetilde\Theta_{G}(\pi')$.
\item
A symmetric semi-persistent subrelation of $\Theta$ which is compatible with the bijections $\tcL_G$ and $\tcL_{G'}$ defined in \eqref{eqn:JordanUU} and \eqref{eqn:JordanSpO} is a \textit{theta-relation}.
\end{enumerate}
}
\end{defn}

\begin{thm}
Any theta-relation which properly contains $\underline\Theta$ or $\overline\Theta$ is not one-to-one. 

In other words, both $\underline\Theta$ and $\overline\Theta$ are maximal one-to-one theta-relations.
\end{thm}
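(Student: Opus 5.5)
The plan is to reduce first to unipotent representations, then to a single Witt tower, and there to show that the semi-persistence requirement fixes everything once one-to-one-ness is imposed.

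\emph{Reduction to unipotent representations.} A theta-relation $\widetilde\Theta$ is by definition compatible with the bijections $\tcL_G$ and $\tcL_{G'}$. Theorems~\ref{thm:reducUU}, \ref{thm:reducSpOeven} and \ref{thm:reducSpOodd} show that $\Theta_{G,G'}$ restricted to $\cE(G,s)\times\cE(G',s')$ is governed by two pieces of data:
\begin{itemize}
\item the forced identity on the $\ne$ (and $-1$) parts;
\item the unipotent theta correspondence for a smaller dual pair $(G_{s,1},G'_{s',1})$.
\end{itemize}
Hence, if a theta-relation $\widetilde\Theta$ strictly contains $\underline\Theta$ (resp.\ $\overline\Theta$), it contains an extra pair whose image under $\tcL$ is an extra pair of unipotent representations for some smaller dual pair. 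So it suffices to treat unipotent representations, that is, pairs $(\pi_\Lambda,\pi_{\Lambda'})$ with $(\Lambda,\Lambda')\in\cB(\Sp_{2n},\rO^\epsilon_{2n'})$ (Theorem~\ref{thm:main}), or the analogous set $\cB_{\rU_n,\rU_{n'}}$ in the unitary case.

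\emph{Counting argument.} Fix the Harish-Chandra data as in Theorem~\ref{thm:HC-unip}, i.e.\ a defect $d$ of $\Lambda$ and the corresponding defect $d'$ of $\Lambda'$. Choose $n'$ inside the semi-persistent range, so $n'\ge n-2d$ (resp.\ $n'\ge n+2d+1$). Then every $\pi_\Lambda$ with $\Lambda\in\cS_{n,d}$ must have a nonempty image under any theta-relation.

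The key claim is a cardinality identity: for such $n'$, both $\underline\Theta$ and $\overline\Theta$ define injections $\cS_{n,d}\to\cS_{n',d'}$. I expect, in a suitable range and via symmetry, that these are bijections onto the relevant set, the one forced by the semi-persistence conditions on the $G'$ side.

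For $\underline\Theta$ this is visible from the formula $\Upsilon(\Lambda')=\Upsilon(\Lambda)^t\cup[\begin{smallmatrix} r\cr -\end{smallmatrix}]$. With $r=n'-n$, when $n'\ge n$ this is a bijection from $\cP_2(n)$ onto the bipartitions of $n'$ whose first row contains a part $\ge$ every part of the other data. Symmetry then forces the same count from the other side.

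\emph{Ruling out an extra pair.} Now suppose a theta-relation $\widetilde\Theta\supsetneq\underline\Theta$ contains an extra pair $(\pi_\Lambda,\pi_{\Lambda''})$ with $\Lambda''\ne\Lambda'$. Taking $n'$ large, the set $\{\pi_{\Lambda_1}:\Lambda_1\in\cS_{n,d}\}$ together with the $\underline\Theta$-images shows the following. By symmetry and semi-persistence applied to $\pi_{\Lambda''}$ on the $G'$ side, $\pi_{\Lambda''}$ is already the $\underline\Theta$-image of some other $\pi_{\Lambda_1}$ with $\Lambda_1\ne\Lambda$, or the reverse holds. Either way some representation has two partners, so $\widetilde\Theta$ is not one-to-one.

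If $n'$ is not large, I would embed the pair in the Witt tower. Theorem~\ref{thm:HC-unip} and Corollary~\ref{cor:Theta-WW} allow passing to the Weyl group correspondence of Theorem~\ref{thm:AMRconj}, which is stable under $n'\mapsto n'+1$ by adding one box to the first row. The extra pair at level $n'$ produces an extra pair at a level $n'$ where the counting argument applies; this requires checking that the extension respects the definition of subrelation.

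\emph{Case $\overline\Theta$ and the main obstacle.} For $\overline\Theta$ the same argument applies, provided one shows that $\overline\Theta$ is defined (nonempty) throughout the semi-persistent range and is symmetric. This is the main obstacle: Pan's inductive definition via the linear order ``$<$'' and maximal order is not obviously symmetric, nor obviously surjective onto the relevant $G'$-side set. Here I would first try to use the identification $\overline\Theta=\eta$ in the stable range, from the last theorem quoted, and then propagate along the tower by the inductive construction, comparing with \cite{Pan-eta-SpO,PanUU}.
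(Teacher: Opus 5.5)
There is a genuine gap, and it lies in the only case that is not trivial.

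\textbf{Where the argument fails.} The extra pair $(\pi_\Lambda,\pi_{\Lambda''})\in\widetilde\Theta_{G,G'}\setminus\underline\Theta_{G,G'}$ lives at a fixed level $(n,n')$, so you cannot ``take $n'$ large''.

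If at that level $\underline\Theta_{G'}(\pi_\Lambda)=\{\pi_{\Lambda'}\}$ is nonempty, you are done immediately: $\pi_\Lambda$ has the two partners $\pi_{\Lambda'}\ne\pi_{\Lambda''}$ in $\widetilde\Theta$, and no counting is involved. Your stated reasoning for that case is also incorrect:
\begin{itemize}
\item for $n'>n-2d$, the representation $\pi_{\Lambda''}$ lies outside its own semi-persistent range;
\item $\underline\Theta$ is not onto $\cS_{n',-4d}$, since $|\cS_{n,4d+1}|=|\cP_2(n-4d^2-2d)|<|\cP_2(n'-4d^2)|=|\cS_{n',-4d}|$. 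The two sides have the same size only at the critical level $n'=n-2d$.
\end{itemize}

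The real case is $\underline\Theta_{G'}(\pi_\Lambda)=\emptyset$, which forces $n'<n-2d$. Here your remedy of pushing the extra pair up the Witt tower has no basis. $\widetilde\Theta$ is an arbitrary theta-relation. Nothing in the definition (subrelation, symmetric, semi-persistent, compatible with $\tcL$) makes a pair of $\widetilde\Theta_{G_n,G'_{n'}}$ produce a pair of $\widetilde\Theta_{G_n,G'_{n'+1}}$ outside $\underline\Theta$. The stability of $\Omega_{\overline{n},\overline{n}'}$ under $n'\mapsto n'+1$ is a property of $\Theta$, not of its subrelations.

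\textbf{The missing idea.} The semi-persistence ranges of the two partners are complementary, so no lifting is needed.
\begin{itemize}
\item Case $\epsilon=+$. Since $\widetilde\Theta\subset\Theta$, Theorem~\ref{thm:main} forces $\defect(\Lambda)=4d+1$ and $\defect(\Lambda'')=-4d$. Apply semi-persistence of $\underline\Theta$ to the reversed pair $(G',G)$: condition (1)(b) with defect $4(-d)$ gives $\underline\Theta_G(\pi_{\Lambda''})\ne\emptyset$ as soon as $n\ge n'+2d$, i.e.\ for $n'\le n-2d$. Together with $n'\ge n-2d$ on the other side, every level is covered.
\item Case $\epsilon=-$. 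The two ranges are $n'\ge n+2d+1$ and, since $-4d-2=4(-d-1)+2$, $n\ge n'-2d-1$.
\item Unitary pairs. The relation $\ell(\lambda'_\infty)=\ell(\lambda_\infty)\pm 1$ (or both zero), combined with the parity conditions in (3), gives the same complementarity.
\end{itemize}

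Hence $\pi_\Lambda$ or $\pi_{\Lambda''}$ already has a $\underline\Theta$-partner different from the other. By symmetry of $\widetilde\Theta$, that representation then has two partners, for the pair $(G,G')$ or $(G',G)$. Together with your reduction through $\tcL$, this is the entire argument.

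It applies verbatim to $\overline\Theta$, using only that $\overline\Theta$ is symmetric and semi-persistent, i.e.\ that it is a theta-relation. That is Pan's result and is presupposed by the statement. So the ``main obstacle'' you identify is not part of this proof, and you need neither surjectivity, nor counting, nor the comparison with $\eta$.
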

\begin{proof}
See \cite[Corollary~7.3]{Pan-eta-SpO} and \cite[Proposition~4.7]{PanUU}.
\end{proof}

\section{A holographic duality} \label{sec:holo}
Classical error-correcting codes - used to transmit messages over noisy channels – have long been known to have rich connections to other branches of mathematics, including invariant theory, sphere packings, and modular forms.

Narain conformal field theories (CFTs) are a class of two-dimensional CFTs describing the geometry of the spacetime in string theory (see \cite{Narain1, Narain2}). They are characterized by a set of vertex operators whose left- and right-moving momenta span so-called Narain lattices. Narain CFTs are bosonic, non-chiral, and modular invariant when the lattices are even, Lorentzian, and self-dual, respectively. While they are simple theories of free compact bosons specified by the
lattices, they have a rather large continuous moduli space and exhibit a rich structure such as symmetry enhancement and dualities. Recently, they have received renewed interest  due to the revelation of their relation to quantum error-correcting codes (QECs). The CFTs whose Narain lattices are the even self-dual Lorentzian lattices built from qubit stabilizer codes are named \textit{Narain code CFTs}.

In \cite{DHM}, A. Dymarsky, J. Henriksson and B. McPeak studied  the holographic correspondence between 3d ``Chern–Simons gravity” and an \textit{ensemble} of 2d Narain code CFTs, showing that the mathematical identity underlying this holographic duality can be understood and rigorously proven using the framework of the theta correspondence over finite fields. The physical properties of the CFT  ensemble are linked to concepts from quantum information theory, specifically quantum stabilizer codes and classical even self-dual error-correcting codes. This rephrasing of holographic duality in terms of quantum codes provides new tools and perspectives for studying quantum gravity and how spacetime geometry might emerge from quantum information.

\end{document}